\numberwithin{equation}{section} 
\newtheorem{thm}[equation]{Theorem}
\newtheorem{prop}[equation]{Proposition}
\newtheorem{lemma}[equation]{Lemma}
\newtheorem{property}[equation]{Property}
\theoremstyle{definition}
\theoremstyle{remark}
\newtheorem{rmk}[equation]{Remark}
\newcommand{\F}{\mathbb F}
\newcommand{\Z}{\mathbb Z}
\newcommand{\Spec}{\operatorname{Spec}}
\newcommand{\G}{\mathbb G}
\renewcommand{\P}{\mathbb P}
\newcommand{\C}{\mathbb C}
\renewcommand{\c}{\subseteq}
\newcommand{\A}{\mathbb A}
\newcommand{\mc}[1]{\mathcal{#1}}
\newcommand{\cl}{\overline}
\newcommand{\set}[1]{\{#1\}}
\newcommand{\on}[1]{\operatorname{#1}}
\author{Federico Scavia}
\title[Steenrod operations on the de Rham cohomology of stacks]{Steenrod operations on the de Rham cohomology of algebraic stacks}
\address{Department of Mathematics\\
	University of British Columbia\\
	Vancouver, BC V6T 1Z2\\Canada}
\email{scavia@math.ubc.ca}
\begin{document}
	\begin{abstract}
	Building up on work of Epstein, May and Drury, we define and investigate the mod $p$ Steenrod operations on the de Rham cohomology of smooth algebraic stacks over a field of characteristic $p>0$. We then compute the action of the operations on the de Rham cohomology of classifying stacks for finite groups, connected reductive groups for which $p$ is not a torsion prime, and (special) orthogonal groups when $p=2$.
	\end{abstract}
	
	\maketitle
	
	\section{Introduction}
	Let $R$ be a commutative ring with identity, and let $X$ be a smooth algebraic stack over $R$. We will mostly be interested in the case when $R=k$ is a field and $X=BG$ is the classifying stack of a linear algebraic $k$-group $G$. (Note that $BG$ is always smooth over $k$, even if $G$ is not.) We denote by $\Omega_{X/R}$ the de Rham complex  of abelian sheaves on the big \'etale site of $X$:
		\[0\to \mc{O}_X\to \Omega^1_{X/R}\to \Omega^2_{X/R}\to\cdots.\] 	
	By definition, the de Rham cohomology $H^*_{\on{dR}}(X/R)$ of $X$ is the hypercohomology of $\Omega_{X/R}$; see \cite[\S 1]{totaro2018hodge}.

	Let  $p$ be a prime number, let $R=\F_p$ be a field of $p$ elements, and let $G$ be a finite discrete group. 
	In \cite[Lemma 10.2]{totaro2018hodge}, B. Totaro produced a canonical isomorphism of graded rings \begin{equation}\label{dr-sing}
	H^*_{\on{dR}}(BG/\F_p)\xrightarrow{\sim} H^*_{\on{sing}}(BG;\F_p)=H^*(G,\F_p),\end{equation}
	where the ring on the right is group cohomology. (The identification of the singular cohomology of $BG$ with the group cohomology of $G$ is classical.)
	 
	Let $G$ be a split connected reductive group over $\Z$. Assume that $p$ is not a torsion prime for $G$, that is, the $p$-torsion subgroup of $H^*_{\on{sing}}(BG(\C);\Z)$ is trivial, where we regard $G(\C)$ as a complex Lie group. Totaro showed in \cite[Theorem 9.2]{totaro2018hodge} that $H^*_{\on{dR}}(BG_{\F_p}/\F_p)$ is a polynomial ring on generators of degrees equal to two times the fundamental degrees of $G$. As a consequence, he obtained an isomorphism
\begin{equation}\label{dr-sing-conn}
H^*_{\on{dR}}(BG_{\F_p}/\F_p)\cong H^*_{\on{sing}}(BG(\C);\F_p).\end{equation}

	When $p$ is a torsion prime for $G$, it is an interesting problem to compute the ring $H^*_{\on{dR}}(BG_{\F_p}/\F_p)$, and to see whether (\ref{dr-sing-conn}) is still valid for $G$. For example, when $p=2$, Totaro showed in \cite[Theorem 11.1]{totaro2018hodge} that
	\begin{align}\label{dr-sing-orth}
	\nonumber H^*_{\on{dR}}(B\on{O}_{2r}/\F_2)&=\F_2[u_1,u_2,\dots,u_{2r}],\\ H^*_{\on{dR}}(B\on{O}_{2r+1}/\F_2)&=\F_2[v_1,c_1,u_2,u_3,\dots,u_{2r}]/(v_1^2),\\
	\nonumber H^*_{\on{dR}}(B\on{SO}_n/\F_2)&=\F_2[u_2,u_3,\dots,u_n],
	\end{align}
	where $|u_i|=i$ for every $i$, $|v_1|=1$ and $|c_1|=2$. In particular, when $p=2$, (\ref{dr-sing-conn}) holds for $G=\on{O}_{2r},\on{SO}_n$, but not for $G=\on{O}_{2r+1}$. (In characteristic $2$, $\on{O}_{2r}$ is disconnected, while $\on{O}_{2r+1}=\on{SO}_{2r+1}\times \mu_2$ is connected.) Moreover, he proved in \cite[Theorem 12.1]{totaro2018hodge} that (\ref{dr-sing-conn}) fails for $p=2$ and $G=\on{Spin}_{11}$. 
	
	In \cite{primozic2019computations}, E. Primozic computed $H^*_{\on{dR}}(BG_{\F_2}/\F_2)$ when $G$ is the split group of type $\on{G}_2$, and when $G=\on{Spin}_n$ for $n\leq 11$. In the examples considered by him, with the exception of $\on{Spin}_{11}$, (\ref{dr-sing-conn}) always holds. Primozic then asked whether Steenrod operations on the de Rham cohomology of smooth stacks over $\F_p$ may be defined, and whether they agree with the topological Steenrod operations on $H^*_{\on{sing}}(BG(\C);\F_p)$, when $X=BG$ and (\ref{dr-sing-conn}) holds.
	
	In the present work, we adapt a construction of R. Drury \cite{drury2019steenrod}  to define the Steenrod $p$-power operations on $H^*_{\on{dR}}(X/k)$, for a smooth algebraic stack $X$ over a field $k$ of characteristic $p$. Drury's work fits in the setting of J. P. May \cite{may1970general}. Presumably, one could also proceed by extending the work of D. Epstein \cite{epstein1966steenrod} to hypercohomology functors.
	
	We summarize the properties that we have been able to establish in \Cref{main-steenrod} below. When $k$ is perfect, we write $W_2(k)$ for the ring of Witt vectors of length $2$ with coefficients in $k$.
	
	\begin{thm}\label{main-steenrod}
	Let $p$ be a prime number, let $k$ be a field of characteristic $p$, and let $X$ be a smooth algebraic stack over $k$. Then, for all $i\in \Z$, we have group homomorphisms 
	\[\on{Sq}^i: H^*_{\on{dR}}(X/k)\to H^{*+i}_{\on{dR}}(X/k)\] when $p=2$, and \[\on{P}^i:H^*_{\on{dR}}(X/k)\to H^{*+2(p-1)i}_{\on{dR}}(X/k),\qquad \beta \on{P}^i:H^*_{\on{dR}}(X/k)\to H^{*+2(p-1)i+1}_{\on{dR}}(X/k)\] when $p>2$, respectively. These homomorphisms are natural in $X$ (i.e. they commute with pullbacks along $1$-morphisms of $k$-stacks), and satisfy the following properties.
	
	(i) For every $n\geq 0$ and every $x\in H^n_{\on{dR}}(X/k)$, we have
	\begin{equation*}
	\on{Sq}^i(x)=\begin{cases}
	x^2 &\text{ if $i=n$},\\
	0 &\text{ if $i>n$}, 
	\end{cases}
	\qquad 
	\on{P}^i(x)=\begin{cases}
	x^p &\text{ if $2i=n$},\\
	0 &\text{ if $2i>n$}, 
	\end{cases}	
	\end{equation*}
	when $p=2$ and $p>2$, respectively.
	
	(ii) The (internal) Cartan formulas of \cite[(1) p. 165]{may1970general} hold.
	
	(iii) The \'Adem relations of \cite[Theorem 4.7]{may1970general} hold.
	
	(iv) Assume that $X=[Y/G]$, where $Y$ is a smooth quasi-projective $k$-scheme and $G$ is a linear algebraic $k$-group.  Then, for all $i<0$, we have \[\on{Sq}^i=0\quad (p=2),\qquad\qquad \on{P}^i=0\quad (p>2)\] in $H^*_{\on{dR}}(X/k)$. Moreover, $\on{P}^0$ and $\on{Sq}^0$ factor as
	\[H^*_{\on{dR}}(X/k)\to H^*(X,\mc{O}_X)\to H^*(X,\mc{O}_X)\to H^*_{\on{dR}}(X/k),\] where the first map is an edge homomorphism in the Hodge spectral sequence, the second map is induced by the Frobenius endomorphism of $\mc{O}_X$ and the third map is an edge homomorphism in the conjugate spectral sequence.
	
	(v) Assume that $p>2$, that $k$ is perfect, and that $X=\tilde{X}\times_{W_2(k)}k$, where $\tilde{X}$ is a smooth $W_2(k)$-stack of finite type and with affine diagonal. Then there is a group homomorphism \[\beta: H^*_{\on{dR}}(X/k)\to H^{*+1}_{\on{dR}}(X/k)\] which is a derivation in the graded sense, and such that $\beta \on{P}^i=\beta\circ \on{P}^i$ for all $i\in \Z$.
	\end{thm}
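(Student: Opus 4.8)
The plan is to run J.~P.~May's general machine for Steenrod operations \cite{may1970general} --- in the hypercohomology incarnation of Drury \cite{drury2019steenrod} --- on the de Rham complex, and then to establish the geometric refinements (iv) and (v) by separate arguments. For the machine it suffices to check that $\Omega_{X/k}$, regarded as a strictly graded-commutative DGA of abelian sheaves on the big \'etale site of $X$, satisfies the hypotheses of \cite{drury2019steenrod}: graded-commutativity holds on the nose, $\mc{O}_X$ sits in degree $0$, and the site together with the complex is functorial along $1$-morphisms of $k$-stacks. Passing to a functorial resolution (for instance a Godement resolution, normalized \`a la Thom--Whitney in order to preserve strict commutativity) and taking global sections produces a functorial $E_\infty$-algebra over $\F_p$ --- a ``Cartan object'' in May's terminology --- whose cohomology is $H^*_{\on{dR}}(X/k)$, and on which the higher cup products coming from the resolution are genuinely nontrivial. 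Feeding this into Drury's construction yields the homomorphisms $\on{Sq}^i$, $\on{P}^i$ and $\beta\on{P}^i$, their naturality in $X$, and the properties (i), (ii) and (iii), which are the formal output of \cite{may1970general}.

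\textbf{Part (iv).} Writing $X=[Y/G]$ with $Y$ smooth quasi-projective and $G$ linear algebraic, I would approximate $X$ by the smooth quasi-projective $k$-schemes $(Y\times(V\setminus S))/G$, where $V$ is a representation of $G$ on which the action is free outside a closed subset $S$ of large codimension, exactly as in Totaro's treatment of de Rham cohomology \cite{totaro2018hodge}. For $S$ of codimension large relative to a fixed degree, these schemes compute $H^*_{\on{dR}}(X/k)$, $H^*(X,\mc{O}_X)$, and the Hodge and conjugate spectral sequences of $X$ in that degree, compatibly with the operations, so it is enough to treat a smooth quasi-projective $k$-scheme. There $\Omega_{X/k}$ is an honest complex of sheaves concentrated in non-negative degrees, and the higher cup products on its hypercohomology complex (built from the simplicial direction of the resolution) vanish in the same range as for simplicial cochains; this instability, already present in Drury's construction, gives $\on{P}^i=0$ (respectively $\on{Sq}^i=0$) for $i<0$. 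For the factorization of $\on{P}^0$ (and of $\on{Sq}^0$ when $p=2$), I would use that a chain-level representative of this operation is the $p$-th power: it preserves the Hodge filtration and, being ``of degree $p$'', its only contribution in a fixed total degree comes from the degree-zero graded piece $\mc{O}_X$, where it induces the Frobenius $f\mapsto f^p$; the resulting class lies at the bottom step of the Hodge filtration, hence is detected by the conjugate edge map, via the Cartier identification $\mc{H}^0(\Omega_{X/k})\cong\mc{O}_X$. Chasing the two edge homomorphisms gives the asserted composite.

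\textbf{Part (v).} Since $\tilde{X}$ is smooth over $W_2(k)$, the complex $\Omega_{\tilde{X}/W_2(k)}$ is termwise flat over $W_2(k)$, so tensoring it with the exact sequence $0\to k\xrightarrow{p}W_2(k)\to k\to 0$ of $W_2(k)$-modules and using $\Omega_{\tilde{X}/W_2(k)}\otimes_{W_2(k)}k\cong\Omega_{X/k}$ yields a short exact sequence of DGAs of sheaves \[0\to\Omega_{X/k}\xrightarrow{\,p\,}\Omega_{\tilde{X}/W_2(k)}\to\Omega_{X/k}\to 0\] in which the leftmost term is a square-zero ideal of the middle one. I define $\beta$ to be the induced connecting homomorphism on de Rham hypercohomology; it is a graded derivation by the usual computation for the connecting map of a square-zero extension of DGAs. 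It then remains to match the machine's operation $\beta\on{P}^i$ with $\beta\circ\on{P}^i$. By \cite{may1970general} one has $\beta\on{P}^i=\beta_{\mathrm{int}}\circ\on{P}^i$, where $\beta_{\mathrm{int}}$ is the internal Bockstein of the theory; and $\beta_{\mathrm{int}}$, evaluated on our model, is by construction the connecting homomorphism of the mod $p$ reduction of any lift to $\Z/p^2$-coefficients --- which is precisely the displayed sequence, the lift being provided by $\tilde{X}$. Hence $\beta_{\mathrm{int}}=\beta$, and (v) follows.

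\textbf{The main obstacle.} I expect the real difficulties to lie in two places. First, arranging the model of the overview so that it is \emph{simultaneously} strictly homotopy-commutative, functorial on \emph{all} smooth $k$-stacks (not only on schemes or simplicial schemes), and compatible with the descent used to compute de Rham hypercohomology; this is where the adaptation of \cite{drury2019steenrod} must be carried out with care. Second, the geometric identifications in (iv) and (v): extracting from May's otherwise purely formal machine enough concrete information about the \emph{lowest} operation $\on{P}^0$ to pin it down as Frobenius through the interaction of the Hodge and conjugate filtrations, and identifying the internal Bockstein of the machine with the connecting map attached to the $W_2(k)$-lift.
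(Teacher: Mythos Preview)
Your setup for the operations and for (i)--(iii) is essentially the paper's: run Drury's hypercohomology version of May's machine on $\Omega_{X/k}$ over the big \'etale topos, and read off naturality, the squaring/$p$-th power property, Cartan, and \'Adem from \cite{may1970general}. (The paper does not pass through a Thom--Whitney model; a standard injective resolution already yields a homotopy-associative DGA with the required $\theta$, so strict commutativity of the resolution is not needed.)

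The real gap is in (iv). Your instability argument --- ``the higher cup products, built from the simplicial direction of the resolution, vanish in the same range as for simplicial cochains'' --- tacitly assumes the algebra is concentrated in degree $0$. Epstein's vanishing \cite[Theorem 5.1.2]{epstein1966semisimplicial} bounds $D_i$ in terms of the \emph{cosimplicial} degree of a representative, but a de Rham class of total degree $q$ can sit in bidegree $(q-j,j)$ with $j>0$, and then the bound you invoke fails. Similarly, your description of $\on{P}^0$ is off: a chain-level $p$-th power represents the \emph{top} operation ($\on{P}^{n/2}$, resp.\ $\on{Sq}^n$), not $\on{P}^0$; for $x\in H^q$ one has $\on{P}^0(x)=\nu(-q)D_{q(p-1)}(x)$, which is not visibly a $p$-th power on cochains. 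The paper supplies exactly the missing idea: the crystalline Poincar\'e lemma identifies $H^*_{\on{dR}}(X/k)$ with $H^*((X/k)_{\mathrm{cris}},\mc{O}_{X/k})$ compatibly with Steenrod operations, and $\mc{O}_{X/k}$ \emph{is} concentrated in degree $0$, so Epstein's computations (via the comparison of Epstein's and Drury's $\theta$) give both $\on{P}^i=0$ for $i<0$ and $\on{P}^0=\mathrm{Frobenius}$. Transporting back to de Rham and applying a formal lemma on hypercohomology spectral sequences yields the Hodge/conjugate factorization. Your approximation step $X\rightsquigarrow (Y\times(V\setminus S))/G$ is the same as the paper's, but the crystalline detour for schemes is the point you are missing.

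For (v) your idea is right in outline and matches the paper's, but you skip the step that costs work: to invoke May's identity $\beta\on{P}^i=\beta\circ\on{P}^i$ one needs the pair $(K,\theta)$ to be \emph{reduced}, i.e.\ one needs a $\Z/p^2\Z$-lift of the structure map $\theta$, not merely of the complex $K$. The Drury $\theta$ is built from an injective resolution and does not obviously lift. The paper produces a lift by switching to a \v{C}ech model: the $E_\infty$-structure on \v{C}ech cochains (via the Eilenberg--Zilber operad) is defined over any base ring and is compatible with base change, so the \v{C}ech complex of a smooth affine cover of $\tilde{X}$ over $W_2(k)$ furnishes $(\tilde{K},\tilde{\theta})$ with flat $\tilde{K}$. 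You should also be aware that the connecting map you write down is what the paper calls Totaro's Bockstein; the paper observes (Remark after the Bockstein construction) that this does \emph{not} agree with the May Bockstein in general, so the identification ``$\beta_{\mathrm{int}}=\beta$'' requires care about which short exact sequence of complexes one is actually using.
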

		Our construction of the Bockstein of (v) also works for $p=2$, and in fact when $p=2$ we obtain the equality $\beta=\on{Sq}^1$ familiar from topology; see \Cref{bockstein}. Note that a Bockstein homomorphism does not exist in Epstein's generality, and it is not automatically defined in May's setting. Moreover, our Bockstein does not coincide with the one defined by Totaro during the proof of \cite[Theorem 11.1]{totaro2018hodge}; see \Cref{bockstein-totaro}.  Our construction appears to be new even in the case of smooth projective $k$-varieties. (If $X$ is a separated scheme over a ring $R$, the diagonal $X\to X\times_RX$ is a closed embedding, hence affine.)

	In (iv), there is no loss of generality in assuming that $G=\on{GL}_n$ for some $n\geq 0$.

	In \cite{may1970general}, the Cartan formula and \'Adem relations are written with homological indexing; see \cite[\S 5]{may1970general} for the changes required to pass to cohomology. Properties (i), (ii) and (iii) have been proved by Drury for the hypercohomology of a commutative differential graded $\F_p$-algebra on a topological space. Drury's proof immediately generalizes to topoi with sufficiently many points, and in particular to the big \'etale topos of $X$. The most difficult part of the proof of \Cref{main-steenrod} consists of establishing properties (iv) and (v). 
	
	Our second result is the following computation of the Steenrod operations for classifying stacks of linear algebraic groups. We denote by $\on{Sq}$ and $\on{P}$ the total Steenrod square and the total Steenrod power, respectively. By definition, for all $x\in H^*_{\on{dR}}(X/k)$, we have
	\[\on{Sq}(x)=\sum_{i\in\Z}\on{Sq}^i(x)\quad (p=2),\qquad \qquad \on{P}(x)=\sum_{i\in \Z}\on{P}^i(x)\quad (p>2).\]
	Note that by \Cref{main-steenrod}(i) the  sums contain only finitely many non-zero terms.
	
	\begin{thm}\label{mainthm}
		Let $p$ be a prime number, and let $G$ be a linear algebraic group over $\F_p$.
		\begin{enumerate}[label=(\alph*)]
			\item If $G$ is finite and discrete, then the isomorphism (\ref{dr-sing}) is compatible with Steenrod operations.
			\item If $G$ is split reductive and $p$ is not a torsion prime for $G$, then the Steenrod operations on $H^*_{\on{dR}}(BG/\F_p)$ are trivial, that is, for all $x\in H^*_{\on{dR}}(BG/\F_p)$ we have:
				\[\on{Sq}(x)=x^2\quad (p=2),\qquad \on{P}(x)=x^p\quad (p>2).\]
			\item Assume that $p=2$. The Steenrod operations on $H^*_{\on{dR}}(B\on{O}_n/\F_2)$ for $n\geq 1$, and on $H^*_{\on{dR}}(B\on{SO}_n/\F_2)$ for $n\geq 3$, are non-trivial. More precisely, we have:
			\[
			\on{Sq}(u_{2a})=u_{2a}^2,\qquad \on{Sq}(u_{2a+1})=u_{2a+1}^2+u_{4a+1}+\sum_{t=0}^{2a-1}u_{2a-t}u_{2a+1+t}.
			\]
			Here $u_i:=0$ for $i>2r$, and in the case of $\on{SO}_n$ we set $u_1:=0$. In the case of $\on{O}_{2r+1}$, we also have:
			\[\on{Sq}(v_1)=0,\qquad \on{Sq}(c_1)=c_1^2.\]
		\end{enumerate}
	\end{thm}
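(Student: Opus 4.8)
The plan is to handle the three parts in turn, each time bootstrapping from a well-understood special case by naturality, the Cartan formula and the Bockstein. For \emph{part (a)}, the idea is to promote Totaro's isomorphism (\ref{dr-sing}) to an equivalence that respects the multiplicative (``May algebra'', equivalently $E_\infty$) structure from which the operations are extracted. I would use the \v{C}ech nerve of the finite \'etale cover $\Spec\F_p\to BG$: its $m$-th term is $G^{\times m}$, a disjoint union of copies of $\Spec\F_p$, whose de Rham complex is concentrated in degree $0$ and equal to the $\F_p$-algebra of $\F_p$-valued functions on $G^m$. Hence the object of the derived category of the big \'etale topos computing $H^*_{\on{dR}}(BG/\F_p)$, together with its multiplicative structure, is identified with the (co)simplicial $\F_p$-algebra $[m]\mapsto\F_p^{G^m}$, i.e.\ with the normalized cochain complex $C^*(G,\F_p)$ with its usual Eilenberg--Zilber structure; Drury's de Rham operations and the classical topological operations on $H^*(G,\F_p)$ are then produced from this structure by one and the same recipe, so they agree. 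One also checks $\on{Sq}^0=\on{P}^0=\on{id}$ here, which follows from \Cref{main-steenrod}(iv) together with the fact that Frobenius acts as the identity on $\F_p$-valued functions (and the Hodge spectral sequence is trivial since $\Omega^{\ge 1}_{BG/\F_p}=0$). Alternatively, by naturality it suffices to treat $G=\Z/p$ and then invoke an axiomatic characterization of the operations.

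For \emph{part (b)}, I would reduce, via naturality of the operations under $BT\to BG$ for a split maximal torus $T\cong\G_m^r\subseteq G$, to computing on $H^*_{\on{dR}}(BT/\F_p)=\F_p[x_1,\dots,x_r]$ with $|x_i|=2$; the restriction $H^*_{\on{dR}}(BG/\F_p)\to H^*_{\on{dR}}(BT/\F_p)$ is injective (for $\on{GL}_n$ this is the elementary inclusion of symmetric polynomials, and in general it follows from Totaro's computation, $p$ not being a torsion prime). Each $x_i$ is pulled back from the generator $x$ of $H^2_{\on{dR}}(B\G_m/\F_p)$. On the latter, \Cref{main-steenrod}(i) forces $\on{P}^1(x)=x^p$ and $\on{P}^i(x)=0$ for $i>1$, while \Cref{main-steenrod}(iv) together with $H^{>0}(B\G_m,\mc{O}_{B\G_m})=0$ (a torus is linearly reductive) forces $\on{P}^0(x)=0$; when $p=2$ one moreover needs $\on{Sq}^1(x)=\beta(x)=0$, which holds because $H^3_{\on{dR}}(B\G_m/\F_2)=0$. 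Thus $\on{P}(x_i)=x_i^p$, and the Cartan formula (\Cref{main-steenrod}(ii)) propagates this to all of $\F_p[x_1,\dots,x_r]$, hence to $H^*_{\on{dR}}(BG/\F_p)$ by injectivity (the total operation of a homogeneous element is computed correctly since $(y+z)^p=y^p+z^p$ in characteristic $p$).

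For \emph{part (c)}, I would first dispose of the ``easy'' operations. \Cref{main-steenrod}(i) gives $\on{Sq}^i(u_j)=u_j^2$ for $i=j$ and $0$ for $i>j$, and \Cref{main-steenrod}(iv) computes $\on{Sq}^0$ through the coherent cohomology $H^*(B\on{O}_n,\mc{O}_{B\on{O}_n})$ and the Frobenius on it. For $n$ even, $u_1$ is the pullback of the generator $t$ of $H^1_{\on{dR}}(B\Z/2\,/\F_2)$ along the Dickson homomorphism $\on{O}_{2r}\to\Z/2$, so $\on{Sq}^0(u_1)=u_1$ by part (a); for $i\ge 2$ one gets $\on{Sq}^0(u_i)=0$, and likewise $\on{Sq}^0(v_1)=\on{Sq}^0(c_1)=0$. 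The Bockstein $\on{Sq}^1=\beta$ (available by \Cref{main-steenrod}(v) and the following remark, since $\on{O}_n$, $\on{SO}_n$ and $\mu_2$ are defined over $\Z$) is a derivation; one shows $\beta(u_i)=0$ for $i\ge 2$ and $\beta(c_1)=0$ --- equivalently, that these classes lift to de Rham cohomology over $W_2(\F_2)$ --- while $\beta(u_1)=u_1^2$ and $\beta(v_1)=v_1^2=0$ are already forced by \Cref{main-steenrod}(i). This is already where the de Rham operations part ways with the topological ones, for which $\beta(w_{2a})=w_1w_{2a}+w_{2a+1}\neq 0$.

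It then remains to pin down the intermediate operations $\on{Sq}^i(u_j)$ with $1<i<j$, and this is the hard part. The strategy is a de Rham splitting principle: using the inclusions $\on{O}_m\times\on{O}_{m'}\hookrightarrow\on{O}_{m+m'}$ one establishes a Whitney formula, i.e.\ that the total de Rham Stiefel--Whitney class $u=1+u_1+u_2+\cdots$ is multiplicative, so that restriction along $\on{O}_2^{\times r}\hookrightarrow\on{O}_{2r}$ sends $u$ to $\prod_{i=1}^r(1+u_1^{(i)}+u_2^{(i)})$ in $H^*_{\on{dR}}(B\on{O}_2^{\times r}/\F_2)=\F_2[u_1^{(1)},u_2^{(1)},\dots,u_1^{(r)},u_2^{(r)}]$, and that this restriction is injective; the cases of odd $n$ and of $\on{SO}_n$ are then reduced to this one using the splitting $\on{O}_{2r+1}=\on{SO}_{2r+1}\times\mu_2$ and restriction along $\on{SO}_n\hookrightarrow\on{O}_n$ (which kills $u_1$, $v_1$, $c_1$, explaining the convention $u_1:=0$ for $\on{SO}_n$, and also handles the formulas $\on{Sq}(v_1)=0$, $\on{Sq}(c_1)=c_1^2$ from the $B\mu_2$ computation). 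On $\on{O}_2$ one has $\on{Sq}(u_1)=u_1+u_1^2$ (pulled back from $B\Z/2$) and $\on{Sq}(u_2)=u_2^2$ (since $\on{Sq}^0u_2=0$, $\on{Sq}^1u_2=\beta u_2=0$, $\on{Sq}^2u_2=u_2^2$). Applying the Cartan formula to $\on{Sq}\bigl(\prod_i(1+u_1^{(i)}+u_2^{(i)})\bigr)=\prod_i\bigl(1+u_1^{(i)}+(u_1^{(i)})^2+(u_2^{(i)})^2\bigr)$, re-expanding in terms of the $u_j$, and using injectivity of the restriction yields the stated formulas; the term $u_{4a+1}$ in $\on{Sq}^{2a}(u_{2a+1})$ arises exactly as the ``$w_0w_{4a+1}$'' term in the classical Wu formula, while the simpler shape of $\on{Sq}(u_{2a})$ reflects that the de Rham square of $u_2$ is merely $u_2^2$. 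I expect the main obstacle to be precisely these geometric inputs to the last step --- multiplicativity of the total de Rham Stiefel--Whitney class and injectivity of restriction to $B\on{O}_2^{\times r}$ (with the analogues for $\on{SO}_n$), together with the small-group Bockstein vanishings $\beta(u_2)=0$ and $\beta(c_1)=0$ --- which should be accessible via the methods Totaro used to establish (\ref{dr-sing-orth}), but will require care.
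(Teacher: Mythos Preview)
Your proposals for parts (a) and (b) are essentially the paper's approach. For (a), the paper carries out exactly your \v{C}ech comparison: it applies \Cref{cech-to-derived} both to the universal $G$-torsor $\Spec\F_p\to BG$ and to the topological cover $E_{\on{top}}G\to B_{\on{top}}G$, and matches the resulting $\mc{Z}(p)$-algebra structures on the \v{C}ech cochains (\Cref{remark-functorial}). For (b), the paper also restricts to a split maximal torus and proves injectivity via Totaro's degeneration results; the only difference is that the vanishing of $\on{P}^0$ on $H^2_{\on{dR}}(B\G_m/\F_p)$ is obtained by pulling back to $\P^1$ (\Cref{gm}) rather than by your linear-reductivity argument, though either works.

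For part (c) the overall architecture also matches: both restrict $\on{O}_{2r}$ to $\on{O}_2^r$, use Totaro's explicit description of $\iota^*(u_i)$ (the content of your ``Whitney formula'' and injectivity), and push the computation down to $B\on{O}_2$; the cases of $\on{SO}_n$ and $\on{O}_{2r+1}$ are then deduced exactly as you outline. The substantive difference is in how $\on{Sq}(u_2)=u_2^2$ is established on $B\on{O}_2$. The paper does not compute $\on{Sq}^0(u_2)$ and $\on{Sq}^1(u_2)$ separately; instead it restricts further to the subgroup $H=\Z/2\Z\times\mu_2\subset\on{O}_2$, for which $H^*_{\on{dR}}(B\on{O}_2/\F_2)\to H^*_{\on{dR}}(BH/\F_2)$ is injective and sends $u_2\mapsto t+sv$. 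Using \Cref{mup} and part (a) one gets $\on{Sq}(t+sv)=t^2$ directly, and this equals the image of $u_2^2$.

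Your alternative route to $\on{Sq}(u_2)$ has a genuine pitfall. You propose to obtain $\on{Sq}^1(u_2)=\beta(u_2)=0$ by arguing that $u_2$ lifts to de Rham cohomology over $W_2(\F_2)$. But \Cref{bockstein-totaro} warns that the connecting homomorphism for $0\to\Omega_{X/\F_2}\to\Omega_{\tilde X/W_2}\to\Omega_{X/\F_2}\to 0$ is \emph{not} the Bockstein $\beta=\on{Sq}^1$ of this paper: Totaro's connecting map is nontrivial on the $u_{2a}$, whereas $\on{Sq}^1(u_{2a})=0$ is a \emph{consequence} of \Cref{mainthm}(c), not an input to it. So ``$u_2$ lifts to $W_2$'' is neither obviously true nor obviously equivalent to $\on{Sq}^1(u_2)=0$; you would need a direct computation with the \v{C}ech Bockstein of \Cref{bockstein}. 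Likewise, your claim $\on{Sq}^0(u_i)=0$ for $i\ge 2$ via \Cref{main-steenrod}(iv) requires controlling the edge map $H^i_{\on{dR}}(B\on{O}_n/\F_2)\to H^i(B\on{O}_n,\mc{O})$, which is not immediate. The paper's restriction to $\Z/2\Z\times\mu_2$ sidesteps both issues in one stroke.
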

	In algebraic topology Steenrod operations are never trivial, because $\on{Sq}^0$ and $\on{P}^0$ are equal to the identity. Thus (b) stands in stark contrast to its topological counterpart. In general, the triviality of the Steenrod operations on de Rham cohomology is related to subtle arithmetic information; see \Cref{cris-frobenius}. 

	Since $\on{SO}_2\cong \G_{\on{m}}$, the condition $n\geq 3$ in (c) is necessary. In (c), the only interesting non-trivial operation on $u_{2a+1}$ is $\on{Sq}^{2a}$. The expression for $\on{Sq}^{2a}(u_{2a+1})$ has the same form as Wu's formula for the topological $\on{Sq}^{2a}$ applied to the Stiefel-Whitney class $w_{2a+1}$.
	
	As the computations of Totaro and Primozic show, (\ref{dr-sing-conn}) may hold even if $p$ is a torsion prime for $G$. \Cref{mainthm} shows that, even though the fact that $2$ is a torsion prime for $\on{SO}_n$ is not detected by the failure of (\ref{dr-sing-conn}), it is detected by the existence of non-trivial Steenrod operations. 
	
	We now give a brief description of the content of each section. In \Cref{definesteenrod}, we recall the definition of May's category $\mc{C}(p)$, the Steenrod operations associated to objects of $\mc{C}(p)$, and Drury's construction of the Steenrod operations on the hypercohomology of a sheaf of commutative differential graded $\F_p$-algebras $A$. In \Cref{general}, we first prove the naturality of the operations with respect of morphisms of topoi. Then, we assume that $A$ is concentrated in degree $0$, and we compare Drury's setting to that of Epstein. As a consequence, we obtain in \Cref{steenrod<0} that negative Steenrod operations on the hypercohomology of $A$ are zero, and that $\on{P}^0$ and $\on{Sq}^0$ are induced by the Frobenius endomorphism of $A$. In \Cref{cech}, we return to the case of an arbitrary commutative differential graded algebra $A$. We give a way to compute Steenrod operations using \v{C}ech cohomology; see \Cref{cech-to-derived}. In \Cref{approx}, we define Steenrod operations on de Rham cohomology of stacks, and we prove that we may approximate the de Rham cohomology of quotient stacks $X$ as in \Cref{main-steenrod}(iv) by smooth schemes, that is, for all $d\geq 0$ we construct a morphism $Z_d\to X$ such that $Z_d$ is a smooth $k$-scheme of finite type and the pullback $H^*_{\on{dR}}(X/k)\to H^*_{\on{dR}}(Z_d/k)$ is injective in degrees $\leq d$. In \Cref{derham}, we prove \Cref{mainthm}(iv) by using the results of \Cref{approx} to reduce to the case of smooth schemes, and then conclude in that case by combining the crystalline Poincar\'e Lemma with \Cref{general}. In \Cref{bockstein}, we prove \Cref{main-steenrod}(v) using the results of \Cref{cech}. Finally, in \Cref{proof} we combine the work of the previous sections with some explicit calculations to prove \Cref{mainthm}.
	
	\subsection*{Notation}
	Let $\mc{A}$ be an additive category. A cochain complex $A$ in $\mc{A}$ is an increasing sequence of objects and homomorphisms \[\dots\to A^{i-1}\xrightarrow{d^{i-1}} A^i\xrightarrow{d^i} A^{i+1}\to\cdots\] in $\mc{A}$, such that $d^i\circ d^{i-1}=0$ for all integers $i$. If $\mc{A}$ is abelian, we denote by $H^*(A)$ the cohomology of $A$: $H^i(A):=\on{Ker}d^i/\on{Im}d^{i-1}$. If $A,B\in \mc{A}$, we denote by $\on{Hom}(A,B)$ the group of homomorphisms from $A$ to $B$ in $\mc{A}$. 
	
	If $A$ is a simplicial or cosimplicial object of $\mc{A}$, we denote by $K(A)$ and $K_N(A)$ the unnormalized and normalized cochain complexes associated to $A$, respectively; see \cite[Tags 0194, 019D, 019H]{stacks-project}. When $A$ is a simplicial object, $K(A)$ and $K_N(A)$ are non-positively graded, and when $A$ is a cosimplicial object, they are non-negatively graded. 
	
	A double complex $A$ in $\mc{A}$ is a cochain complex in the category of cochain complexes of $\mc{A}$. (In particular, the squares in a double complex are commutative, not anti-commutative.) If $A$ is a double complex, we denote by $\on{Tot}(A)$ the associated total complex: \[\on{Tot}(A)^n:=\bigoplus_{i\in \Z}A^{i,n-i},\qquad d^n(a):=d^v(a)+(-1)^{n-i}d^h(a),\] where $|a|=(i,n-i)$, $d^v:A^{i,n-i}\to A^{i,n-i+1}$ is the vertical differential, and $d^h:A^{i,n-i}\to A^{i+1,n-i}$ is the horizontal differential.
	
	Let $R$ be a commutative ring with identity, let $G$ be a finite group, and let $\mc{A}$ be the category of $R$-modules with left $G$-action, i.e. the category of $R[G]$-modules. If $M$ and $N$ are $R[G]$-modules, we view the group of endomorphisms $\on{Hom}_{R}(M,N)$ and $M\otimes_{R}N$ as $R[G]$-modules in the usual way, and we let $\on{Hom}_{R[G]}(M,N)$ and $M\otimes_{R[G]}N$ be the module of $G$-equivariant homomorphisms and the tensor product of $M$ and $N$ as $R[G]$-modules. If $A$ and $B$ are cochain complexes of $R[G]$-modules, we define the cochain complex of $R[G]$-modules $\underline{\on{Hom}}_{R}(A,B)$ by
	\[\underline{\on{Hom}}_{R}^n(A,B):=\prod_{i\in\Z} \on{Hom}_{R}(A^i,B^{n+i}),\quad d^{n}(f):= \prod_{i\in \Z}(d_B^{i+n}\circ f^i+(-1)^{n+1}f^{i+1}\circ d^i_A).\]
	We define the complex of $R$-modules $\underline{\on{Hom}}_{R[G]}(A,B)$ in a similar way. Furthermore, we define their tensor product $A\otimes_{R} B$ by
	\[(A\otimes_{R} B)^n:=\bigoplus_{i\in \Z}A^i\otimes_{R} B^{n-i},\qquad d^n(f):=\bigoplus_{i\in \Z}(d^i_A\otimes \on{id}^{n-i}_B+(-1)^n(\on{id}^i_A\otimes d^{n-i}_B)).\]
	We define $A\otimes_{R[G]}B$ in a similar way. The signs are chosen so that, for all cochain complexes $A,B,C$ of $R[G]$-modules, the canonical adjunction 
	\[\Phi:\on{Hom}_{R[G]}(A,\on{Hom}_{R}(B,C))\xrightarrow{\sim} \on{Hom}_{R[G]}(A\otimes_{R}B,C)\]
	is given by the sign-free formula \[\Phi^n(f^{k+l})(a\otimes b)=(f^k(a))^l(b),\qquad |f|=n, |a|=k, |b|=l.\] We will sometimes combine the adjunction formula with the switch isomorphism \[\tau:A\otimes_RB\xrightarrow{\sim} B\otimes_RA,\qquad \tau(a\otimes b)=(-1)^{kl}(b\otimes a),\qquad |a|=k, |b|=l.\]
	
	Let $\mc{T}$ be a topos, and let $\mc{S}et$ be the set topos (also called point topos). We have a morphism of topoi $(e_*,e^{-1}):\mc{T}\to \mc{S}et$, where $e_*=\Gamma(\mc{T},-):=\on{Hom}(e,-)$, where $e$ is a terminal object of $\mc{T}$, and $e^{-1}$ is the constant sheaf functor. When writing a morphism of topoi, we do not explicitly mention the adjuction between pullback and pushforward.
	
	Let $M$ and $N$ be left $R[G]$-modules on $\mc{T}$ (here we view $\mc{T}$ as a topos ringed by the constant ring object associated to $R[G]$). We denote by $\mc{H}om_{R}(M,N)$ and $M\otimes_{R}N$ the sheaf of homomorphisms and the sheaf tensor product, respectively; see \cite[IV, Proposition 12.1, Proposition 12.7]{sga4I} for the definition. It immediately follows from the definition that $\Gamma(\mc{T},\mc{H}om_{R}(M,N))=\on{Hom}_{R}(M,N)$. 
	If $A$ and $B$ are complexes of $R$-modules on $\mc{T}$, we define the $R[G]$-modules $\underline{\mc{H}om}_{R}(A,B)$ and $A\otimes_{R}B$ on $\mc{T}$ as in the case of $R[G]$-modules (on $\mc{S}et$). It easily follows from the definition that $\Gamma(\mc{T},\underline{\mc{H}om}_{R}(A,B))=\underline{\on{Hom}}_{R}(A,B)$. We will be mostly interested in the case when $A$ is constant, in which case our definition agrees with \cite[Definition 3.2.1]{drury2019steenrod}.

By definition, a homotopy associative differential graded $R$-algebra is a cochain complex $(C,d)$ of $R$-modules, where $C^{i}=0$ for $i<0$, together with a homomorphism of complexes $m:C\otimes C\to C$ such that \[d(a\cdot b)=d(a)\cdot b+(-1)^{|a|}a\cdot  d(b),\] for all homogeneous $a$ and $b$, where $x\cdot y:=m(x\otimes y)$, and the diagram
\[
\begin{tikzcd}
C\otimes C\otimes C \arrow[d,"m\otimes \on{id}"] \arrow[r,"\on{id}\otimes m"] & C\otimes C \arrow[d,"m"] \\
C\otimes C \arrow[r,"m"] & C
\end{tikzcd}
\]
is homotopy commutative. A (two-sided) homotopy identity for $C$ is an element $u\in C^0$ such that the homomorphisms $C\to C$ of left and right multiplication by $u$ are homotopic to the identity.

\section{Steenrod operations on sheaf cohomology}\label{definesteenrod}
	We start by recalling the part of the setup of May \cite{may1970general} which is relevant to us. In \cite[\S 2]{may1970general}, the definitions are given with applications to homology in mind, and in \cite[\S 5]{may1970general} it is explained how to change the indexing when dealing with cohomology. We give the definitions directly in the cohomological setting.
	
	Let $R$ be a commutative ring with identity, let $p$ be a prime number, let $\Sigma_p$ be the symmetric group on $p$ letters, and let $\pi$ be the cyclic subgroup of $\Sigma_p$ generated by some $\alpha\in \Sigma_p$ of order $p$. Let $\epsilon:W(p,R)\to R$ be the $R[\pi]$-free resolution of $R$ defined as follows. For every $i\geq 0$, we let $W(p,R)_i=W(p,R)^{-i}$ be $R[\pi]$-free on one generator $e_i$. Define $N,T\in R[\pi]$ by the formulas \[N:=1+\alpha+\dots+\alpha^{p-1},\qquad T:=\alpha-1.\] We define a differential $d$ and the augmentation by the formulas
	\[d(e_{2i+1}):=Te_{2i},\qquad d(e_{2i+2}):=Ne_{2i+1},\qquad \epsilon(\alpha^je_0):=1\]
	for all $i,j\geq 0$.
	
	We also let $V$ be a $R[\Sigma_p]$-free resolution of $R$ such that $V^{i}=0$ for all $i>0$. In particular, $V$ is a $R[\pi]$-free resolution of $R$, hence by \cite[Tag 0649]{stacks-project} there exists a homomorphism $j:W(p,R)\to V$ of $R[\pi]$-complexes, which commutes with the augmentations maps.

	\subsection{The category $\mc{C}(p)$.}\label{cp} Following \cite[Definitions 2.1]{may1970general}, we define a category $\mc{C}(p,R)$ as follows. The objects of $\mc{C}(p,R)$ are pairs $(K,\theta)$, where $K$ is a homotopy associative differential graded $R$-algebra, $\pi$ acts trivially on $K$ and on $K^{\otimes p}$ by cyclic permutations of the factors, and $\theta: W(p,R)\otimes K^{\otimes p}\to K$ is a homomorphism of $R[\pi]$-complexes such that:
		\begin{enumerate}[label=(\roman*)]
			\item the restriction of $\theta$ to $(R e_0)\otimes K^{\otimes p}\cong K^{\otimes p}$ is $R$-homotopic to the iterated product $K^{\otimes p}\to K$ (associated in some order), and
			\item there exists a homomorphism $\phi: V\otimes K^{\otimes p}\to K$ of $R[\Sigma_p]$-complexes such that $\theta$ is $R[\pi]$-homotopic to the composition \[W(p,R)\otimes K^{\otimes p}\xrightarrow{j\otimes 1}V\otimes K^{\otimes p}\xrightarrow{\phi} K.\]
		\end{enumerate} 
		Note that (ii) does not depend on the choice of $V$ or $j$. A morphism $f:(K_1,\theta_1)\to (K_2,\theta_2)$ in $\mc{C}(p,R)$ is defined as a homomorphism of $R$-complexes $g:K_1\to K_2$ such that the diagram
		\[
		\begin{tikzcd}
		W(p,R)\otimes K_1^{\otimes p} \arrow[d,"\on{id}\otimes g^{\otimes p}"] \arrow[r,"\theta_1"] & K_1 \arrow[d,"g"] \\
		W(p,R)\otimes K_2^{\otimes p} \arrow[r,"\theta_2"] & K_2.
		\end{tikzcd}
		\]
		is $R[\pi]$-homotopy commutative. We write \[\mc{C}(p):=\mc{C}(p, \Z/p\Z),\qquad W:=W(p,\Z/p\Z).\]		
		Let $i:R^{\otimes p}\xrightarrow{\sim}R$ be the isomorphism $a_1\otimes\dots \otimes a_p\mapsto a_1\cdots a_p$. Note that $(R,\epsilon\otimes i)$ is an object of $\mc{C}(p,R)$. We say that $(K,\theta)\in \mc{C}(p,R)$ is \emph{unital} if $K$ has a two-sided homotopy identity $u$ such that the homomorphism $R \to K$ given by $1\mapsto u$ is a morphism in $\mc{C}(p,R)$.
		
		If $R\to S$ is a ring homomorphism, and $(K,\theta)$ is an object of $\mc{C}(p,R)$, the pair $(K\otimes_RS,\theta\otimes_R\on{id}_S)$ defines an object of $\mc{C}(p,S)$. We say that $(K,\theta)\in \mc{C}(p)$ is \emph{reduced} if there exists an object $(\tilde{K},\tilde{\theta})\in \mc{C}(p,\Z/p^2\Z)$ whose reduction modulo $p$ is isomorphic to $(K,\theta)$, and such that $\tilde{K}$ is flat over $\Z/p^2\Z$. If $(K,\theta)$ is reduced, tensorization  by $\tilde{K}$ of the short exact sequence $0\to \Z/p\Z\to \Z/p^2\Z\to \Z/p\Z\to 0$ gives rise  to a short exact sequence of cochain complexes \[0\to K\to \tilde{K}\to K\to 0,\] where $\tilde{K}\to K$ is the natural projection. The associated connecting map \[\beta:H^*(K)\to H^{*+1}(K)\] is called the Bockstein homomorphism.
	
	\subsection{The Steenrod operations.}\label{def-steenrod} Given a prime $p$ and an object $(K,\theta)$ of $\mc{C}(p)$, May constructs mod $p$ Steenrod operations on $H^*(K)$ as follows; see \cite[Definitions 2.2]{may1970general}. The homomorphism $\theta$ induces a map \[\theta_*:H^*(W\otimes_{\F_p[\pi]}K^{\otimes p})\to H^*(K).\] Let $q,i\geq 0$ be integers, and let $x\in H^q(K)$. Define \[D_i(x):=\theta_*(e_i\otimes x^{\otimes p})\in H^{pq-i}(K).\] It is checked in \cite[p. 161]{may1970general} (using homological indexing) that this construction does not depend on the choice of representative of $x$. If $i<0$, we set $D_i$ equal to zero. When $p>2$, for every integer $n$ we let \[\nu(n):=(-1)^{n(n-1)(p-1)/4}((p-1)/2)!.\] The mod $p$ Steenrod operations on $H^*(K)$ are defined by the following formulas.
	\begin{itemize}
		\item[--] If $p=2$, $\on{Sq}^s(x):=D_{q-s}(x)\in H^{q+s}(K)$;
		\item[--] If $p>2$, $\on{P}^s(x):=(-1)^s\nu(-q)D_{(q-2s)(p-1)}(x)\in H^{q+2s(p-1)}(K)$, and $\beta \on{P}^s(x):=(-1)^s\nu(-q)D_{(q-2s)(p-1)-1}(x)\in H^{q+2s(p-1)+1}(K)$. 
	\end{itemize}
	One then extends the definitions on arbitrary elements of $H^*(K)$ by linearity. Note that this defines Steenrod operations $\on{Sq}^s$ and $\on{P}^s$ for every integer $s$. 
	For every $x\in H^*(K)$, we define \[\on{Sq}(x):=\sum_s \on{Sq}^s(x),\qquad \on{P}(x):=\sum_s\on{P}^s(x).\]

	 There is not a construction of a Bockstein homomorphism $\beta$ on $H^*(K)$ taking as input an arbitrary $(K,\theta)$. In particular, the expression $\beta P^s$ appearing in the definition is a single symbol, and not a composition. 
	 
	\begin{lemma}\label{bockstein-properties}
	 	Let $(K,\theta)$ be a reduced object of $\mc{C}(p)$, and let $\beta:H^*(K)\to H^{*+1}(K)$ be the Bockstein homomorphism. Then $\beta$ satisfies the following properties.
	 	
	 	(a) It is a derivation in the graded sense: for every two homogeneous elements $a,b$ of $H^*(K)$, we have \[\beta(a\cdot b)=\beta(a)\cdot b+(-1)^{|a|}a\cdot \beta(b).\]
	 	
	 	(b) When $p=2$, we have $\beta=\on{Sq}^1$. When $p>2$, we have $\beta\on{P}^s=\beta\circ \on{P}^s$ for all $s\in \Z$.
	 \end{lemma}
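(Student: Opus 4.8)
The plan is to work throughout with the flat $\Z/p^2\Z$-lift $(\tilde K,\tilde\theta)$ furnished by the hypothesis, and to carry out chain-level computations with the defining short exact sequence $0\to K\xrightarrow{p}\tilde K\to K\to 0$. Flatness of $\tilde K$ over $\Z/p^2\Z$ is used repeatedly: it says that multiplication by $p$ on $\tilde K$ has kernel and image both equal to $p\tilde K$, so that the composite $K=\tilde K/p\tilde K\xrightarrow{p}p\tilde K\hookrightarrow\tilde K$ is injective. Hence, for a cocycle $a\in K^q$ and any lift $\tilde a\in\tilde K^q$, the element $d\tilde a$ reduces to $0$ mod $p$, so equals $p\tilde c$ for a cochain $\tilde c$ whose reduction $c\in K^{q+1}$ is a well-defined cocycle representing $\beta[a]$; and likewise for the computations below.

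For (a), pick cocycles $a\in K^q$, $b\in K^m$ and lifts $\tilde a,\tilde b\in\tilde K$, and write $d\tilde a=p\tilde c_a$, $d\tilde b=p\tilde c_b$, where $c_a,c_b$ represent $\beta[a],\beta[b]$. Since the product $\tilde m$ on $\tilde K$ is a morphism of complexes, the Leibniz identity $d(\tilde a\cdot\tilde b)=(d\tilde a)\cdot\tilde b+(-1)^q\tilde a\cdot(d\tilde b)$ holds on the nose (homotopy-associativity of $\tilde K$ plays no role here). Therefore $d(\tilde a\cdot\tilde b)=p\,(\tilde c_a\cdot\tilde b+(-1)^q\tilde a\cdot\tilde c_b)$, and since $\tilde a\cdot\tilde b$ lifts $a\cdot b$, the class $\beta([a][b])$ is represented by $c_a\cdot b+(-1)^q a\cdot c_b$, i.e. $\beta([a][b])=\beta[a]\cdot[b]+(-1)^q[a]\cdot\beta[b]$.

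For (b), the point is a chain-level relation between the connecting map and the operations $D_i$. Given a cocycle $a\in K^q$ with lift $\tilde a$ and $d\tilde a=p\tilde c$, the cochain $\tilde\theta(e_i\otimes\tilde a^{\otimes p})$ lifts the cocycle $\theta(e_i\otimes a^{\otimes p})$ representing $D_i[a]$, and since $\tilde\theta$ is a chain map,
\[
d\tilde\theta(e_i\otimes\tilde a^{\otimes p})=\tilde\theta\bigl((de_i)\otimes\tilde a^{\otimes p}\bigr)+(-1)^{i}\tilde\theta\bigl(e_i\otimes d(\tilde a^{\otimes p})\bigr).
\]
Here $d(\tilde a^{\otimes p})$ is $p$ times an explicit alternating sum of the cochains $\tilde a^{\otimes j}\otimes\tilde c\otimes\tilde a^{\otimes(p-1-j)}$, while $(de_i)\otimes\tilde a^{\otimes p}$ equals $e_{i-1}\otimes T\tilde a^{\otimes p}=0$ or $e_{i-1}\otimes N\tilde a^{\otimes p}=p\,(e_{i-1}\otimes\tilde a^{\otimes p})$ according to the parity of $i$, using that the symmetric tensor $\tilde a^{\otimes p}$ is annihilated by $T=\alpha-1$ and multiplied by $p$ by $N$ (the Koszul signs from the $\pi$-action on $\tilde K^{\otimes p}$ cancel when $p$ is odd, and $T=N$ in $\F_2[\pi]$ when $p=2$). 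Dividing by $p$ and reducing mod $p$, one obtains a relation of the form $\beta(D_i[a])=D_{i-1}[a]+[\theta(e_i\otimes\xi)]$ for $i$ in the relevant residue class (and $\beta(D_i[a])=[\theta(e_i\otimes\xi)]$ otherwise), where $\xi\in K^{\otimes p}$ is a cocycle built from $a$ and $c$. One then checks $[\theta(e_i\otimes\xi)]=0$: rewrite $\xi$ as $N$ applied to a cocycle $\eta$, so that $e_i\otimes\xi=(Ne_i)\otimes\eta$, and use that $Ne_i$ is a boundary in $W$ — explicitly $Ne_{2j+1}=d(e_{2j+2})$, and over $\F_2$ also $Ne_{2j}=d(e_{2j+1})$ since $T=N$ — together with $d\eta=0$ to see that $e_i\otimes\xi$ is a boundary in $W\otimes_{\F_p[\pi]}K^{\otimes p}$, hence maps to a boundary under $\theta$. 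For $p>2$ the index appearing in the symbol $\beta\on{P}^s$ is $i=(q-2s)(p-1)$, which is even, so
\[
\beta(\on{P}^s(x))=(-1)^s\nu(-q)\,\beta\bigl(D_{(q-2s)(p-1)}(x)\bigr)=(-1)^s\nu(-q)\,D_{(q-2s)(p-1)-1}(x)=\beta\on{P}^s(x)
\]
by definition of $\beta\on{P}^s$. For $p=2$ the same computation, carried out with $i=q$ and combined with the normalization of $\theta$ built into membership in $\mc{C}(2)$, identifies the connecting map with $D_{q-1}=\on{Sq}^1$.

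The step I expect to be most delicate is exactly this last identification together with the vanishing of the cross term $[\theta(e_i\otimes\xi)]$: one must track carefully the Koszul signs in $d(\tilde a^{\otimes p})$ and in the $\pi$-action on $\tilde K^{\otimes p}$, verify that no further contributions survive after reducing mod $p$, and exhibit by hand the element of $W$ whose differential is $Ne_i$. The case $p=2$ carries the additional subtlety that one needs $\beta$ itself — not merely $\beta$ precomposed with $\on{Sq}^0$ — to equal $\on{Sq}^1$, and it is there, rather than for $p>2$, that the argument requires the most attention.
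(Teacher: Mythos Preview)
Your approach is correct and coincides with what the paper does, only in more detail: the paper proves (a) by the one-line observation that the connecting map of a short exact sequence of DGAs is a graded derivation, and for (b) simply cites May \cite[Proposition~2.5(v), Corollary~2.3]{may1970general}, whereas you unpack the chain-level computation that underlies May's proof. Your identification of the delicate points is accurate---the sign bookkeeping in $d(\tilde a^{\otimes p})$ and the $p=2$ case are exactly where May's argument requires care---and your sketch for $p>2$ is the standard one; for $p=2$ your outline is still a bit telegraphic (the ``normalization of $\theta$'' does not by itself close the gap between $\beta\circ\on{Sq}^0$ and $\beta$), but since the paper itself defers entirely to May here, your write-up already goes further than what is required.
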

 
 	\begin{proof}
 	(a) This follows from the definition of $\beta$ as a connecting homomorphism, together with the fact that the differential of $\tilde{K}$ is a derivation in the graded sense.
 	
 		
 	(b) This is proved in \cite[Proposition 2.5(v), Corollary 2.3]{may1970general}.
 	\end{proof}
 	
	Let $(K',\theta')$ be another object of $\mc{C}(p)$, and let $f:K\to K'$ be a homomorphism of complexes. If $f$ is a morphism in $\mc{C}(p)$, then the induced homomorphism $f_*:H^*(K)\to H^*(K')$ respects Steenrod operations.

	\begin{rmk}\label{associative}
	If $K$ is associative and $\theta=\epsilon \otimes m_p$, where $m_p:K^{\otimes p}\to K$ is the $p$-fold iterated product, then $(K,\theta)$ is an object of $\mc{C}(p)$. Moreover, it immediately follows from the definition that the $p$-th power Steenrod operations on $H^*(K)$ are trivial, that is, for all $x\in H^*(K)$ we have $\on{Sq}(x)=x^2$ (if $p=2$) or $\on{P}(x)=x^p$ (if $p>2$). 
	\end{rmk}
	
	\subsection{Operations on sheaf cohomology.}We now review the construction of the Steenrod operations on sheaf cohomology given by  Drury in \cite{drury2019steenrod}. Let $p$ be a prime number, let $\mc{T}$ be a topos with sufficiently many points,\footnote{Drury works in the context of sheaves on a topological space. As we will see, his definitions and arguments easily adapt to topoi with sufficiently many points.} and let $(A,d)$ be a commutative differential graded $\F_p$-algebra on $\mc{T}$. We denote by $\mathbb{H}^*(\mc{T},A)$ the hypercohomology of $A$, that is, $\mathbb{H}^*(\mc{T},A):= H^*(\Gamma(\mc{T},I))$, where $A\to I$ is an injective resolution of $\F_p$-vector spaces on $\mc{T}$. We denote by $m:A\otimes A\to A$ the multiplication map.
	
	 Let $\nu:A\to I$ be an injective resolution of $A$ in the category of $\F_p$-vector spaces on $\mc{T}$. By \cite[Tag 013K]{stacks-project}, we may choose the resolution to be \emph{standard}: $\nu$ is a monomorphism in every degree, and $I^i=0$ for $i<0$.
	
	Since we are working over a field, for every $r\geq 2$ the $r$-fold tensor product
	$\nu^{\otimes r}:A^{\otimes r}\to I^{\otimes r}$ is a monomorphism in every degree and a quasi-isomorphism; see \cite[Lemma 3.2.16]{drury2019steenrod}. Here we use the fact that $\mc{T}$ has sufficiently many points, because then exactness may be checked on stalks. By \cite[Tag 013P]{stacks-project}, there exists $\tilde{m}:I\otimes I\to I$ making the following diagram commute:
	
	\[
	\begin{tikzcd}
	A\otimes A \arrow[d,"\nu\otimes \nu"] \arrow[r,"m"] & A \arrow[d,"\nu"] \\
	I\otimes I \arrow[r,"\tilde{m}"] & I
	\end{tikzcd}
	\]
	By \cite[Tag 013S]{stacks-project}, $\tilde{m}$ is unique up to homotopy. 
	
	We define $K:=\Gamma(\mc{T},I)$, with  differential induced by that of $I$. We define a homomorphism $M:K\otimes K\to K$ in degree $n\geq 0$ as the composition \[\oplus_q\Gamma(\mc{T},I^{q})\otimes \Gamma(\mc{T},I^{n-q})\to \Gamma(\mc{T},\oplus_q I^q\otimes I^{n-q})\xrightarrow{\Gamma(\tilde{m})}\Gamma(\mc{T},I^n).\]
		For every $a,b\geq 0$, we have a cup product given by the composition \[\cup: H^a(K)\otimes H^b(K)\to H^{a+b}(K\otimes K)\xrightarrow{H^{a+b}(M)}  H^{a+b}(K).\]	
	
	By \cite[Lemma 5.1.3]{drury2019steenrod}, the product $M:K\otimes K\to K$ makes $K$ into a homotopy associative differential graded $\F_p$-algebra, and the induced cup product $\cup : H^*(K)\otimes H^*(K)\to H^*(K)$ is graded commutative and associative.
	
	The natural map \[A=\underline{\mc{H}om}_{\F_p}(e^{-1}\F_p,A)\to \underline{\mc{H}om}_{\F_p}(e^{-1}W,I)\] is an $\F_p[\pi]$-resolution by $\F_p[\pi]$-injective objects; see \cite[Corollary 4.3.4]{epstein1966steenrod} for the case when $A$ is concentrated in degree $0$, and \cite[Lemma 3.2.21]{drury2019steenrod} for the general case. It follows that there exists a commutative diagram of $\pi$-equivariant maps
	\begin{equation}\label{key-diagram}
	\begin{tikzcd}
	A^{\otimes p} \arrow[d,"\nu^{\otimes p}"] \arrow[r,"m_p"] & A \arrow[d] \\
	I^{\otimes p} \arrow[r,"\tilde{m}_p"] & \underline{\mc{H}om}_{\F_p}(e^{-1}W,I).
	\end{tikzcd}
	\end{equation}
	The map $\tilde{m}_p$ is unique up to a $\pi$-equivariant homotopy. Passing to global sections, we obtain an $\F_p[\pi]$-homomorphism
	\[\Gamma(\mc{T}, I)^{\otimes p}\to \Gamma(\mc{T},I^{\otimes p})\xrightarrow{\Gamma(\tilde{m}_p)} \Gamma(\mc{T}, \underline{\mc{H}om}_{\F_p}(e^{-1}W,I))= \underline{\on{Hom}}_{\F_p}(W,\Gamma(\mc{T},I)),\] where the last equality is the projection formula \cite[IV, Proposition 10.3]{sga4I}. By the tensor-hom adjunction, this is equivalent to a $\pi$-equivariant homomorphism \[\theta: W\otimes \Gamma(\mc{T}, I)^{\otimes p}\to \Gamma(\mc{T},I).\]
	By \cite[Lemmas 5.3.1 and 5.3.2]{drury2019steenrod}, the pair $(K,\theta)$ satisfies the properties (i) and (ii) of \Cref{cp}, and so is an object in May's category $\mc{C}(p)$. We thus obtain Steenrod operations on $\mathbb{H}^*(\mc{T},A)$. It is straightforward to verify that the operations do not depend on the choice of $\nu:A\to I$, $\tilde{m}$, and $\tilde{m}_p$.
	
	\begin{prop}\label{sheaf-properties}
	Let $\mc{T}$ be a topos with sufficiently many points, and let $A$ be commutative differential graded $\F_p$-algebra on $\mc{T}$.
		
	(i) For every $n\geq 0$ and every $x\in \mathbb{H}^*(\mc{T},A)$, we have
	\begin{equation*}
	\on{Sq}^i(x)=\begin{cases}
	x^2 &\text{ if $i=n$},\\
	0 &\text{ if $i>n$}, 
	\end{cases}
	\qquad 
	\on{P}^i(x)=\begin{cases}
	x^p &\text{ if $2i=n$},\\
	0 &\text{ if $2i>n$}, 
	\end{cases}	
	\end{equation*}
	when $p=2$ and $p>2$, respectively.
	
	(ii) The (internal) Cartan formulas of \cite[(1) p. 165]{may1970general} hold.
	
	(iii) The \'Adem relations of \cite[Theorem 4.7]{may1970general} hold.	
	\end{prop}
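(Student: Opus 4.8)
The object $(K,\theta)$ of $\mc{C}(p)$ attached to $A$ has already been produced above, together with the identification of the product on $H^*(K)=\mathbb{H}^*(\mc{T},A)$ with the cup product coming from $m\colon A\otimes A\to A$. Granting this, property (i) is formal. By the convention of \Cref{def-steenrod} we have $D_i=0$ for $i<0$, which immediately gives $\on{Sq}^s(x)=0$ for $s>|x|$ and $\on{P}^s(x)=0$ for $2s>|x|$. For the top operation, property (i) in the definition of $\mc{C}(p)$ (see \Cref{cp}) says that $\theta$ restricted to $(\F_p e_0)\otimes K^{\otimes p}\cong K^{\otimes p}$ is $\F_p$-homotopic to the iterated product $K^{\otimes p}\to K$, so $D_0(x)=x^p$ in $H^*(K)$; for $p=2$ this is exactly $\on{Sq}^{|x|}(x)=x^2$, and for $p>2$ one unwinds the sign $(-1)^s\nu(-|x|)$ appearing in the definition of $\on{P}^s$ (when $2s=|x|$ it equals $1$, as in \cite[Corollary 2.3]{may1970general}) to get $\on{P}^{|x|/2}(x)=x^p$. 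Thus the content of the proposition is (ii) and (iii).

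For these I would follow Drury \cite[\S 5]{drury2019steenrod} essentially verbatim, the one new point being that Drury works with sheaves on a topological space whereas here $\mc{T}$ is an arbitrary topos with enough points. The internal Cartan formula is deduced from May's external Cartan formula \cite[\S 3]{may1970general}: one checks that the multiplication underlies a morphism $(K\otimes K,\theta_{K\otimes K})\to(K,\theta)$ in $\mc{C}(p)$, where $\theta_{K\otimes K}$ is the canonical product structure on a tensor square, and then restricts along the diagonal; at the sheaf level this amounts to the commutativity up to $\pi$-equivariant homotopy of the diagram relating $m$, $\tilde m$ and $\tilde m_p$ of \Cref{key-diagram}, which holds because $m$ is a map of cdga's and because $\tilde m$ and $\tilde m_p$ were chosen as lifts through the resolutions $I$ and $\underline{\mc{H}om}_{\F_p}(e^{-1}W,I)$, such lifts being unique up to (equivariant) homotopy. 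The \'Adem relations follow once one verifies that $(K,\theta)$ is an \'Adem object in the sense of \cite[\S 4]{may1970general}, i.e. that the two natural $p^2$-ary operations obtained by iterating $\theta$ agree up to $\F_p[\Sigma_p\wr\Sigma_p]$-equivariant homotopy; at the sheaf level both arise as equivariant lifts of the $p^2$-fold product $A^{\otimes p^2}\to A$ through an injective resolution associated to a $\Sigma_p\wr\Sigma_p$-free resolution of $\F_p$ built from $W$, and such lifts are unique up to equivariant homotopy. The relations are then \cite[Theorem 4.7]{may1970general}.

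Everything above is algebra about the category $\mc{C}(p)$, and the only thing to check in order to transplant Drury's arguments to $\mc{T}$ is that the homological-algebra inputs he uses survive. Concretely: for a standard injective resolution $A\to I$ of $\F_p$-vector spaces on $\mc{T}$, each $\nu^{\otimes r}\colon A^{\otimes r}\to I^{\otimes r}$ is a degreewise monomorphism and a quasi-isomorphism (\cite[Lemma 3.2.16]{drury2019steenrod}, checked on stalks — this is where "enough points" enters); $\underline{\mc{H}om}_{\F_p}(e^{-1}W,I)$ is a complex of $\F_p[\pi]$-injectives resolving $A$ (\cite[Corollary 4.3.4]{epstein1966steenrod}, \cite[Lemma 3.2.21]{drury2019steenrod}); and the projection formula $\Gamma(\mc{T},\underline{\mc{H}om}_{\F_p}(e^{-1}W,I))=\underline{\on{Hom}}_{\F_p}(W,\Gamma(\mc{T},I))$ holds (\cite[IV, Proposition 10.3]{sga4I}). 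All of these are available in any topos with enough points, so once they are in place there is no residual geometry. I expect the only genuinely delicate point to be the sign and equivariance bookkeeping in the external Cartan argument — i.e. checking that the Eilenberg--Zilber and Künneth maps intertwining the object attached to an exterior product $A\boxtimes A'$ with $(K_A,\theta_A)\otimes(K_{A'},\theta_{A'})$ are suitably multiplicative and $\Sigma_p$-equivariant up to coherent homotopy — but this is exactly what Drury carries out, and it is insensitive to the ambient topos.
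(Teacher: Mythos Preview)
Your proposal is correct and matches the paper's approach: property (i) is formal from the definitions in $\mc{C}(p)$ (the paper just says ``easily seen by unwinding the definitions''), and for (ii) and (iii) the paper simply cites Drury's Lemmas 5.3.3 and 5.3.4 to the effect that $(K,\theta)$ is a Cartan object and an \'Adem object, then invokes May. Your sketch gives more of the underlying content of those lemmas and spells out explicitly which inputs (tensor powers of the resolution, the $\pi$-injective resolution $\underline{\mc{H}om}_{\F_p}(e^{-1}W,I)$, the projection formula) survive in a topos with enough points, but this is exactly what the paper is relying on implicitly.
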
	

	\begin{proof}
	Property (i) holds for the Steenrod operations on $H^*(K)$, for every object $(K,\theta)\in \mc{C}(p)$, as is easily seen by unwinding the definitions of $\on{Sq}^i$ and $\on{P}^i$.
	
	Let now $(K,\theta)\in \mc{C}(p)$ be an object associated to $A$ by the construction above. By \cite[Lemma 5.3.3, Lemma 5.3.4]{drury2019steenrod}, $(K,\theta)$ is a Cartan object and an \'Adem object. We refer the reader to \cite[Definitions 2.1, Definition 4.1]{may1970general} for the definition of Cartan and \'Adem objects. It now follows from \cite[(1) p. 165, Theorem 4.7]{may1970general} that (ii) and (iii) are also satisfied.
	\end{proof}
		
	\begin{rmk}
		Let $u\in I^0$ be the image of the multiplicative identity of $A$. Then $u$ is a homotopy identity for $\Gamma(\mc{T},I)$, and the homomorphism $\F_p\to I$ given by $1\mapsto u$ induces a morphism $(\F_p,\epsilon\otimes i)\to (\Gamma(\mc{T},I),\theta)$ in $\mc{C}(p)$. Thus $(\Gamma(\mc{T},I),\theta)$ is unital. By \cite[Proposition 3.1(iii)]{may1970general}, if we let $1\in H^*(K)$ denote the multiplicative unit, we have $\on{Sq}(1)=1$ and $\on{P}(1)=1$.
	\end{rmk}
	
	\section{General properties}\label{general}
	In this section, we establish properties of the Steenrod operations on $\mathbb{H}^*(\mc{T},A)$ which hold in wide generality.	In later sections, we will restrict our attention to the case when $\mc{T}$ is the big \'etale site of a smooth algebraic stack $X$ over a field of positive characteristic, and $A$ is the de Rham complex of $X$.
		
	\subsection{Functoriality}
	Let $f=(f_*,f^{-1}):\mc{T}'\to \mc{T}$ be a morphism of topoi. Let $(A',d')$ and  $(A,d)$ be commutative differential graded $\F_p$-algebras on $\mc{T'}$ and on $\mc{T}$, respectively. Then $f^{-1}A$ is a commutative differential graded $\F_p$-algebra over $\mc{T}$; see \cite[IV, 3.1.2]{sga4I}. Let $f^{-1}A\to A'$ be a homomorphism of commutative differential graded $\F_p$-algebras on $\mc{T}$. There is a induced homomorphism
	\begin{equation}\label{functorial-map}
	\mathbb{H}^*(\mc{T},A)\to \mathbb{H}^*(\mc{T}',A')
	\end{equation}
	which is constructed as follows. Let $A\to I$ and $A'\to I'$ be standard injective resolutions of $\F_p$-vector spaces on $\mc{T}$. Since $f^{-1}$ is exact, $f^{-1}A\to f^{-1}I$ is a quasi-isomorphism. By \cite[Tag 013P]{stacks-project}, there exists a morphism $f^{-1}I\to I'$ making the square 
	\[
	\begin{tikzcd}
	f^{-1}A \arrow[d] \arrow[r] & A' \arrow[d] \\
	f^{-1}I \arrow[r] & I'
	\end{tikzcd}
	\]
	commute. By adjunction, we get a map $I\to f_*(I')$, and taking global sections yields (\ref{functorial-map}). It is easy to check that (\ref{functorial-map}) does not depend on the choice of resolutions $I,I'$, and of the map $f^{-1}I\to I'$.

	\begin{lemma}\label{functorial}
	 The homomorphism (\ref{functorial-map}) is compatible with Steenrod operations.
	\end{lemma}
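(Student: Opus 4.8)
The plan is to realize (\ref{functorial-map}) as $H^*$ of a chain map $g\colon K\to K'$ which is a morphism in May's category $\mc{C}(p)$; once this is done, the functoriality of the operations under morphisms of $\mc{C}(p)$ recorded in \Cref{def-steenrod} (just before \Cref{associative}) gives the lemma. Here $K=\Gamma(\mc{T},I)$ and $K'=\Gamma(\mc{T}',I')$ are the objects of $\mc{C}(p)$ attached in \Cref{definesteenrod} to $A$ and $A'$ via standard injective resolutions $\nu\colon A\to I$ and $\nu'\colon A'\to I'$; since the operations, and the map (\ref{functorial-map}), do not depend on the choices involved, I am free to fix a lift $\phi\colon f^{-1}I\to I'$ of $f^{-1}A\to A'$ along $\nu'$ as in the construction of (\ref{functorial-map}). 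Unwinding that construction, $g$ is the composite $\Gamma(\mc{T},I)\to\Gamma(\mc{T}',f^{-1}I)\xrightarrow{\Gamma(\phi)}\Gamma(\mc{T}',I')$, where the first map is the canonical natural transformation $\Gamma_{\mc{T}}\Rightarrow\Gamma_{\mc{T}'}\circ f^{-1}$.

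To show $g$ is a morphism in $\mc{C}(p)$ I must prove that $g\circ\theta$ and $\theta'\circ(\on{id}_W\otimes g^{\otimes p})$ are $\F_p[\pi]$-homotopic as maps $W\otimes K^{\otimes p}\to K'$. Using the tensor--hom adjunction together with the projection formula \cite[IV, Proposition 10.3]{sga4I}, this amounts to showing that two $\F_p[\pi]$-maps of complexes $K^{\otimes p}\to\underline{\on{Hom}}_{\F_p}(W,K')$ are $\F_p[\pi]$-homotopic, and that these two maps are the global sections over $\mc{T}'$ of the $\F_p[\pi]$-maps of complexes of sheaves
\[(a):=\underline{\mc{H}om}_{\F_p}(e'^{-1}W,\phi)\circ\iota\circ f^{-1}(\tilde m_p),\qquad (b):=\tilde m_p'\circ\phi^{\otimes p},\]
both regarded as maps $(f^{-1}I)^{\otimes p}\to\underline{\mc{H}om}_{\F_p}(e'^{-1}W,I')$, where $e'\colon\mc{T}'\to\mc{S}et$ is the point-topos morphism (so $f^{-1}e^{-1}W=e'^{-1}W$), the maps $\tilde m_p$ and $\tilde m_p'$ are the ones appearing in (\ref{key-diagram}) over $\mc{T}$ and $\mc{T}'$, and $\iota\colon f^{-1}\underline{\mc{H}om}_{\F_p}(e^{-1}W,I)\xrightarrow{\sim}\underline{\mc{H}om}_{\F_p}(e'^{-1}W,f^{-1}I)$ is the canonical comparison map. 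Note that $\iota$ is an isomorphism: since $W$ and $I$ are concentrated in non-negative degrees, in each cohomological degree the product defining $\underline{\mc{H}om}_{\F_p}$ is finite, and each $e^{-1}W^{-i}$ is a finite free $\F_p[\pi]$-module, so $f^{-1}$ commutes with the $\mc{H}om$'s and finite products in sight. That $(a)$ and $(b)$ compute the two maps above is a diagram chase using the naturality and lax monoidality of $\Gamma_{\mc{T}}\Rightarrow\Gamma_{\mc{T}'}\circ f^{-1}$, the projection formula, and the factorization of $g$.

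It then remains to show $(a)$ and $(b)$ are $\F_p[\pi]$-homotopic. Running the argument of \cite[Lemma 3.2.16]{drury2019steenrod} on $\mc{T}'$ (on stalks, using that $\mc{T}'$ has enough points, that $\F_p$ is a field, and that $f^{-1}$ is exact), the map $(f^{-1}\nu)^{\otimes p}\colon(f^{-1}A)^{\otimes p}\to(f^{-1}I)^{\otimes p}$ is a $\pi$-equivariant monomorphism and quasi-isomorphism, while $\underline{\mc{H}om}_{\F_p}(e'^{-1}W,I')$ is a bounded-below complex of $\F_p[\pi]$-injectives by \cite[Lemma 3.2.21]{drury2019steenrod}. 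By the $\F_p[\pi]$-equivariant form of \cite[Tag 013S]{stacks-project}, an $\F_p[\pi]$-map from $(f^{-1}I)^{\otimes p}$ to $\underline{\mc{H}om}_{\F_p}(e'^{-1}W,I')$ is determined up to $\F_p[\pi]$-homotopy by its restriction along $(f^{-1}\nu)^{\otimes p}$, so it suffices to check $(a)\circ(f^{-1}\nu)^{\otimes p}=(b)\circ(f^{-1}\nu)^{\otimes p}$. Using the commutativity of (\ref{key-diagram}) over $\mc{T}$ and over $\mc{T}'$, the square defining $\phi$, and the fact that the homomorphism of commutative differential graded algebras $f^{-1}A\to A'$ intertwines the $p$-fold iterated products, both composites equal $(f^{-1}A)^{\otimes p}\xrightarrow{f^{-1}(m_p)}f^{-1}A\to A'\to\underline{\mc{H}om}_{\F_p}(e'^{-1}W,I')$, the last arrow being the canonical resolution map. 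This proves the claim, and hence the lemma.

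I expect the main obstacle to be the bookkeeping in the second step: identifying $(a)$ and $(b)$ as the global sections of the right maps on $\mc{T}'$, i.e. checking that the various $\F_p[\pi]$-equivariant diagrams involving $\iota$, the projection formula, and the base-change transformation $\Gamma_{\mc{T}}\Rightarrow\Gamma_{\mc{T}'}\circ f^{-1}$ commute \emph{on the nose}. By contrast, homotopies enter only through the soft lifting principle \cite[Tag 013S]{stacks-project} in the last step, and the verification there is a purely formal diagram chase.
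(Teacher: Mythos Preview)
Your proof is correct and follows essentially the same route as the paper's: both reduce to showing that two $\F_p[\pi]$-maps of complexes of sheaves on $\mc{T}'$ with source $(f^{-1}I)^{\otimes p}$ and target $\underline{\mc{H}om}_{\F_p}(e'^{-1}W,I')$ are $\pi$-homotopic, and both establish this by using that $(f^{-1}\nu)^{\otimes p}$ is a $\pi$-equivariant quasi-isomorphism, that the target is a bounded-below complex of $\F_p[\pi]$-injectives, and that the two maps visibly agree on $(f^{-1}A)^{\otimes p}$ via (\ref{key-diagram}) and the fact that $f^{-1}A\to A'$ is a ring map. The only cosmetic difference is that the paper packages the verification as a single large diagram with an inner commutative square $(f^{-1}A^{\otimes p},f^{-1}A,(A')^{\otimes p},A')$ and an outer square, deduces commutativity of the outer square in the derived category, and then invokes \cite[Tag 05TG]{stacks-project}; you instead name the two composites $(a)$, $(b)$ explicitly and invoke the equivariant form of \cite[Tag 013S]{stacks-project} directly.
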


	\begin{proof}	
	 There is a canonical $\Sigma_r$-equivariant isomorphism $f^{-1}(A^{\otimes r})\xrightarrow{\sim} (f^{-1}A)^{\otimes r}$ for every $r\geq 2$; see \cite[Proposition 13.4(c)]{sga4I}. It follows that for every $r\geq 2$ we have a commutative $\Sigma_r$-equivariant diagram
	\[
	\begin{tikzcd}
	f^{-1}A^{\otimes r} \arrow[d] \arrow[r] & f^{-1}A \arrow[d] \\
	(A')^{\otimes r} \arrow[r] & A',
	\end{tikzcd}
	\]
	where the horizontal maps are the $r$-fold multiplication maps. Denote by $(e'_*,(e')^{-1}):\mc{T}'\to \mc{S}et$ the global sections morphism for $\mc{T}'$. We obtain a diagram of $\pi$-equivariant homomorphisms
	\[
	\begin{tikzcd}
	f^{-1}I^{\otimes p} \arrow[ddd] \arrow[rrr] &&& f^{-1}\underline{\mc{H}om}_{\F_p}(e^{-1}W,I) \arrow[ddd] \\
	& f^{-1}A^{\otimes p}\arrow[r] \arrow[d]  \arrow[ul]  & f^{-1}A \arrow[ur] \arrow[d]  & \\
	& (A')^{\otimes p} \arrow[dl] \arrow[r]  & A' \arrow[dr] &\\ 
	(I')^{\otimes p} \arrow[rrr] &&& \underline{\mc{H}om}_{\F_p}((e')^{-1}W,I')
	\end{tikzcd}
	\]
	where each of the five inner squares is commutative. To construct the left and right squares, apply \cite[Tag 013P]{stacks-project}. The bottom square is (\ref{key-diagram}) for $A'$, and the top square is the pullback of (\ref{key-diagram}) for $A$. Thus, since $f^{-1}$ is exact, the four diagonal arrows are $\pi$-equivariant quasi-isomorphisms. We deduce that the outer square is commutative in the derived category of $\pi$-equivariant $\F_p$-vector spaces on $\mc{T}$. Since every term of $\underline{\mc{H}om}_{\F_p}((e')^{-1}W,I')$ is injective, by \cite[Tag 05TG]{stacks-project} the outer square is $\pi$-homotopy commutative.
	Using the adjunction between $f^{-1}$ and $f_*$, we obtain a $\pi$-homotopy commutative diagram
	\[
	\begin{tikzcd}
	I^{\otimes p} \arrow[d] \arrow[r] & \underline{\mc{H}om}_{\F_p}(e^{-1}W,I) \arrow[d] \\
	f_*((I')^{\otimes p}) \arrow[r] & f_*\underline{\mc{H}om}_{\F_p}((e')^{-1}W,I').
	\end{tikzcd}
	\]
	We thus get a diagram 
	\[
	\begin{tikzcd}
	\Gamma(\mc{T},I)^{\otimes p}\arrow[r] \arrow[d] &  \Gamma(\mc{T},I^{\otimes p}) \arrow[d] \arrow[r] & \underline{\on{Hom}}_{\F_p}(W,\Gamma(\mc{T},I)) \arrow[d]\\
	\Gamma(\mc{T}',I')^{\otimes p} \arrow[r] & \Gamma(\mc{T}',(I')^{\otimes p}) \arrow[r] & \underline{\on{Hom}}_{\F_p}(W,\Gamma(\mc{T}',I')),
	\end{tikzcd}
	\]
	where the square on the left is $\pi$-equivariantly commutative, and the square on the right is obtained by taking global sections in the square above and using the projection formula \cite[IV, Proposition 10.3]{sga4I}, and so it is $\pi$-homotopy commutative. Using the tensor-hom adjunction on the outer rectangle in the previous diagram, we deduce that the square
	\[
	\begin{tikzcd}
	W\otimes\Gamma(\mc{T},I)^{\otimes p} \arrow[d] \arrow[r,"\theta"] & \Gamma(\mc{T},I) \arrow[d] \\
	W\otimes \Gamma(\mc{T}',I')^{\otimes p} \arrow[r,"\theta'"] & \Gamma(\mc{T}',I').
	\end{tikzcd}
	\]	
	is $\pi$-homotopy commutative. This means that (\ref{functorial-map}) gives a morphism in $\mc{C}(p)$, and so it is compatible with Steenrod operations.	
	\end{proof}
	
	\begin{rmk}\label{commutes-ruu}
	Consider the special case when $A'=f^{-1}A$. We have a commutative square
	\[
	\begin{tikzcd}
	\mathbb{H}^*(\mc{T},A) \arrow[r] \arrow[d] & \mathbb{H}^*(\mc{T}',f^{-1}A)   \arrow[d,"\wr"] \\
	\mathbb{H}^*(\mc{T},f_*f^{-1}A)  \arrow[r] &  \mathbb{H}^*(\mc{T},Rf_*f^{-1}A),  
	\end{tikzcd}
	\]
	where the top horizontal map is (\ref{functorial-map}), the vertical map on the left is induced by the unit $A\to f_*f^{-1}A$, and the bottom horizontal map is induced by the natural map $f_*f^{-1}A\to Rf_*f^{-1}A$.  The vertical isomorphism on the right is obtained using the fact that $u_*$ respects injectives. The proof is a simple exercise in homological algebra; we leave it to the reader. This diagram will be used during the proof of \Cref{cris-compatible-scheme}. 
	\end{rmk}
		
	\subsection{The case when $A$ is concentrated in degree $0$}\label{deg0epstein}
	Let $\mc{T}$ be a topos, and let $q=(q_*,q^{-1})$ be a point of $\mc{T}$, that is, a morphism from the set topos $\mc{S}et$ to $\mc{T}$; see \cite[IV, D\'efinition 6.1]{sga4I}. If $E$ is a set, $q_*E$ is called a skyscraper sheaf; see \cite[Tag 00Y9]{stacks-project}. 
	
	\begin{lemma}\label{actually-inj}
		Let $k$ be a field, and let $F=\prod_{j\in J}q_{j*}V_j$, where $J$ is a set and, for every $j\in J$, $V_j$ is a $k$-vector space and $q_j$ is a point of $\mc{T}$. Then $F$ is an injective object of the category of sheaves of $k$-vector spaces on $\mc{T}$. 
	\end{lemma}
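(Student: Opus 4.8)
The plan is to reduce the problem to two classical facts: first, that a skyscraper sheaf $q_*V$ along a point $q$ of a topos is injective whenever $V$ is an injective object of $\on{Vect}_k$ (equivalently, whenever $V$ is any $k$-vector space, since every $k$-module is injective); and second, that an arbitrary product of injective objects is again injective. For the first fact, I would observe that the point $q=(q_*,q^{-1})$ is a morphism of topoi, so $q^{-1}$ is exact and is left adjoint to $q_*$; hence $q_*$ carries injective objects of sheaves of $k$-vector spaces on $\mc{S}et$ (i.e. injective $k$-vector spaces) to injective objects of sheaves of $k$-vector spaces on $\mc{T}$. The key point here is that pushforward along a geometric morphism preserves injectives precisely because its left adjoint is exact: given a monomorphism $M\hookrightarrow N$ of sheaves of $k$-vector spaces on $\mc{T}$ and a map $M\to q_*V$, adjunction turns this into $q^{-1}M\to V$, exactness of $q^{-1}$ makes $q^{-1}M\hookrightarrow q^{-1}N$ a monomorphism, injectivity of $V$ in $\on{Vect}_k$ extends it to $q^{-1}N\to V$, and adjunction brings us back to $N\to q_*V$ extending the original map. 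Since every vector space over a field is an injective $k$-module, each $q_{j*}V_j$ is injective.

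Next I would invoke the general categorical fact that, in any abelian category with products, a product $\prod_{j\in J}E_j$ of injective objects is injective: a lifting problem against a monomorphism $M\hookrightarrow N$ for the product is, by the universal property of the product, the same as a family of lifting problems against $M\hookrightarrow N$ for each $E_j$, each of which is solvable. The category of sheaves of $k$-vector spaces on $\mc{T}$ is abelian and has arbitrary products (computed sectionwise, or more precisely as the sheafification is not even needed since a product of sheaves is already a sheaf). Applying this to $E_j=q_{j*}V_j$ shows that $F=\prod_{j\in J}q_{j*}V_j$ is injective.

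There is essentially no serious obstacle here; the only point requiring a little care is making sure the two cited generalities are stated and applied in the correct category — namely sheaves of $k$-vector spaces on $\mc{T}$, not abelian sheaves — so that "injective $k$-vector space'' can be replaced by "arbitrary $k$-vector space'' and so that the adjunction $(q^{-1},q_*)$ is the one between categories of $k$-vector space objects. If one prefers, one can cite \cite[Tag 01DL]{stacks-project} or the analogous statement in \cite{sga4I} for the fact that pushforward along a morphism of (ringed) topoi preserves injectives, and \cite[Tag 05K3]{stacks-project} for the stability of injectivity under products; then the proof is a one-line combination of these with the injectivity of vector spaces over a field.
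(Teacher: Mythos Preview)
Your proposal is correct and follows essentially the same approach as the paper: reduce to the injectivity of each $q_{j*}V_j$ via the stability of injectives under products, then use that every $k$-vector space is injective together with the fact that $q_{j*}$, being right adjoint to the exact functor $q_j^{-1}$ (as a functor between $k$-linear sheaf categories), preserves injectives. The paper's proof is the same two-step argument, citing \cite[IV, Proposition 13.4]{sga4I} for the adjunction at the level of ringed topoi where you spell out the lifting argument directly.
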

	Note that, in \Cref{actually-inj}, $J$ is allowed to be infinite, and the $V_j$ may be infinite dimensional over $k$.
	\begin{proof}
		If $\set{F_j}_{j\in J}$ is a collection of injective $k$-vector spaces on $\mc{T}$, then $\prod_j F_j$ is also injectve. Therefore, it suffices to show that $q_{j*}V_j$ is injective for every $j\in J$. In the category of $k$-vector spaces, every object is injective. Note that $q_{j*}$ is a right adjoint even when viewed as a functor between the associated ringed topoi of $k$-vector spaces; see \cite[IV, Proposition 13.4]{sga4I}. It follows that each $q_{j*}$ preserves injective objects, and the conclusion follows.
	\end{proof}
	
	Assume now that $\mc{T}$ has sufficiently many points, and let $\set{q_i}_{i\in I}$ be a conservative family of points of $\mc{T}$; see \cite[Tag 00YK]{stacks-project}. Let $k$ be a field. There is an endofunctor of the category of $k$-vector spaces on $\mc{T}$ given on objects $V$ by \[S(V):=\prod_{i\in I}q_{i*}q_i^{-1}V,\] and defined in an obvious way on morphisms. Iterating $S$, for every $k$-vector space $V$ on $\mc{T}$ we obtain a cosimplicial $k$-vector space $S^*(V)$; see \cite[8.1.4]{epstein1966steenrod}. Applying the associated (non-normalized) cochain complex functor $K(-)$, we get a quasi-isomorphism $V\to K(S^*V)$. By \Cref{actually-inj}, every term of $K(S^*V)$ is injective, and so $V\to K(S^*V)$ is an injective resolution of $k$-vector spaces on $\mc{T}$. It may be helpful for the reader to note that this is a special case of \cite[Theorem 8.1.5]{epstein1966steenrod}.
	
	Let now $k=\F_p$, and let $A$ be an $\F_p$-algebra on $\mc{T}$. In \cite[(8.3.2)]{epstein1966steenrod}, a $\pi$-equivariant map 
	\begin{equation}\label{epstein-map}
	K(S^*A)^{\otimes p}\to \underline{\mc{H}om}(e^{-1}W,K(S^*A))
	\end{equation}
	is constructed. We will need the details of the construction of (\ref{epstein-map}) during the proof of \Cref{epstein=may} below. We have a sequence of $\pi$-equivariant cochain maps
	\begin{align}\label{epstein-equiv}W(p,\Z)\to \underline{\on{Hom}}(\nabla k,t)\to \underline{\on{Hom}}(K&(S^*A)^{\otimes p}, K((S^*A)^{\otimes p})\to \\ &\to \underline{\on{Hom}}(K(S^*A)^{\otimes p}), K(S^*A)),\nonumber\end{align} defined as follows.
	\begin{itemize}
		\item[--] The first map (from left to right) is the composition of two homomorphisms $\Psi:W(p,\Z)\to \underline{\on{Hom}}(\nabla k,t)$ and $\Theta:\underline{\on{Hom}}(\nabla k,t)\to \underline{\on{Hom}}(\nabla k,t)$, both defined in \cite[p. 217]{epstein1966semisimplicial}.\footnote{In \cite[p. 217]{epstein1966steenrod}, $W(p,\Z)$ is denoted by $\mathcal{W}$. The complex $\underline{\on{Hom}}(\nabla k,t)$ is defined in \cite[3.1]{epstein1966semisimplicial}, where it is denoted by $\mc{H}om(\nabla k,t)$.}
		\item[--] The second map is induced from the composition $E_A\hat{M}$ of the functors $\hat{M}$ and $E_A$, as defined in \cite[p. 193]{epstein1966steenrod}. That this is a homomorphism of complexes is checked in \cite[p. 194]{epstein1966steenrod}.
		\item[--]  The multiplication map $A^{\otimes p}\to A$ induces a cosimplicial map $S^*(A^{\otimes p})\to S^*A$, hence a cochain map $K((S^*A)^{\otimes p})\to K(S^*A)$. Consider the cosimplicial map $(S^*A)^{\otimes p}\to S^*(A^{\otimes p})$ given by \cite[Lemma 8.2.3]{epstein1966steenrod}, and the induced cochain map $K(S^*A)^{\otimes p}\to K(S^*(A^{\otimes p}))$. The third map is induced by the composition $K(S^*A)^{\otimes p}\to K(S^*(A^{\otimes p}))\to K(S^*A)$.
	\end{itemize}
	Since $A$ is $p$-torsion, (\ref{epstein-equiv}) factors through the projection $W(p,\Z)\to W$. Using the adjunction between $e_*$ and $e^{-1}$ and the tensor-hom adjunction, we get the following canonical isomorphisms:
	\begin{align}\label{adjunction}\nonumber\on{Hom}(W,\underline{\on{Hom}}(K(S^*A)^{\otimes p}, K(S^*A)))&=\on{Hom}(e^{-1}W,\underline{\mc{H}om}(K(S^*A)^{\otimes p},K(S^*A)))\\ &=\on{Hom}(e^{-1}W\otimes K(S^*A)^{\otimes p},K(S^*A))\\ \nonumber &=\on{Hom}(K(S^*A)^{\otimes p},\underline{\mc{H}om}(e^{-1}W,K(S^*A))).\end{align}
	We define (\ref{epstein-map}) to be the image of (\ref{epstein-equiv}) under this chain of isomorphisms.
	
	On the other hand, applying (\ref{key-diagram}) with $I=K(S^*A)$, we have another homomorphism
	\begin{equation}\label{drury}K(S^*A)^{\otimes p} \to \underline{\mc{H}om}(e^{-1}W,K(S^*A)).\end{equation}  
	\begin{lemma}\label{epstein=may}
		The homomorphisms (\ref{epstein-map}) and (\ref{drury}) are homotopic. 	
	\end{lemma}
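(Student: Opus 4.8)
The plan is to reduce the comparison to the homotopy-uniqueness of lifts into a bounded-below complex of injectives, and then to an elementary computation in cohomological degree $0$. Throughout, recall that $A$ is concentrated in degree $0$. Write $I:=K(S^*A)$ and $J:=\underline{\mc{H}om}_{\F_p}(e^{-1}W,I)$, so that (\ref{epstein-map}) and (\ref{drury}) are both $\pi$-equivariant maps of complexes $I^{\otimes p}\to J$; moreover $J$ is concentrated in non-negative degrees and, as recalled above, is termwise $\F_p[\pi]$-injective, and the natural map $\nu_J\colon A\to J$ is an $\F_p[\pi]$-injective resolution. The $p$-fold tensor product $\nu^{\otimes p}\colon A^{\otimes p}\to I^{\otimes p}$ is a $\Sigma_p$-equivariant, hence $\pi$-equivariant, quasi-isomorphism. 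By \cite[Tag 05TG]{stacks-project}, precomposition with $\nu^{\otimes p}$ therefore induces a bijection between $\pi$-homotopy classes of maps $I^{\otimes p}\to J$ and $\pi$-homotopy classes of maps $A^{\otimes p}\to J$. So it suffices to show that (\ref{epstein-map}) and (\ref{drury}), each precomposed with $\nu^{\otimes p}$, are $\pi$-homotopic as maps $A^{\otimes p}\to J$.

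Next I would reduce this to a statement about degree-$0$ components. Since $A^{\otimes p}$ sits in degree $0$ and $J^n=0$ for $n<0$, a morphism of complexes $A^{\otimes p}\to J$ is nothing but an $\F_p[\pi]$-linear map $A^{\otimes p}\to Z^0(J)$, and two such morphisms are $\pi$-homotopic only if equal; furthermore $\nu_J$ identifies $A$ with $Z^0(J)$, and evaluation at the generator $e_0$ of $W$ gives an injection $\on{ev}_{e_0}\colon Z^0(J)\hookrightarrow I^0$ (indeed $\on{ev}_{e_0}\circ\nu_J=\nu$ in degree $0$, and $\nu$ is a monomorphism). Hence it is enough to compare the degree-$0$ components of the two maps after composing with $\on{ev}_{e_0}$. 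On the side of (\ref{drury}): applying (\ref{key-diagram}) with $I=K(S^*A)$ shows that (\ref{drury}) precomposed with $\nu^{\otimes p}$ equals $\nu_J\circ m_p$, where $m_p\colon A^{\otimes p}\to A$ is the $p$-fold product (a $\pi$-equivariant map, as $A$ is commutative); after $\on{ev}_{e_0}$ this is $\nu\circ m_p$. On the side of (\ref{epstein-map}): by the tensor--hom adjunction, composition with $\on{ev}_{e_0}$ extracts the $e_0$-component $\theta_{\mathrm{Eps}}(e_0\otimes-)\colon I^{\otimes p}\to I$ of the $\pi$-equivariant map (\ref{epstein-equiv}) (viewed adjointly as a map $W\otimes I^{\otimes p}\to I$). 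Thus everything reduces to proving that $\theta_{\mathrm{Eps}}\bigl(e_0\otimes\nu(a_1)\otimes\cdots\otimes\nu(a_p)\bigr)=\nu(a_1\cdots a_p)$ for all local sections $a_1,\dots,a_p$ of $A$.

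This last identity is where the explicit construction of (\ref{epstein-equiv}) must be used, and it is the step I expect to be the main obstacle. I would unwind the three maps: $\Psi$ and $\Theta$ send $e_0$ to the basic element of $\underline{\on{Hom}}(\nabla k,t)$ implementing the Alexander--Whitney approximation; the map induced by $E_A\hat{M}$ turns this into the iterated Alexander--Whitney product on $K((S^*A)^{\otimes p})$; and the last map of (\ref{epstein-equiv}) is induced by the multiplication $A^{\otimes p}\to A$ together with the Eilenberg--Zilber shuffle $(S^*A)^{\otimes p}\to S^*(A^{\otimes p})$ of \cite[Lemma 8.2.3]{epstein1966steenrod}. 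On cochains of cohomological degree $0$ --- in particular on the sections $\nu(a_i)\in I^0$, which are the images of the $a_i$ under the augmentation $A\to K(S^*A)$ --- the shuffle map and the Alexander--Whitney product are identities, and the cosimplicial multiplication $S^*(A^{\otimes p})\to S^*A$ reduces in degree $0$ to the multiplication of $A$; so the composite sends $\nu(a_1)\otimes\cdots\otimes\nu(a_p)$ to $\nu(a_1\cdots a_p)$. (Alternatively, one could invoke that Epstein's $e_0$-component is $\pi$-homotopic to the iterated Alexander--Whitney product on $K(S^*A)$ and that the latter coincides up to homotopy with Drury's $\tilde m$ --- both being lifts of $m$ along $\nu^{\otimes 2}$, hence unique up to homotopy by \cite[Tag 013S]{stacks-project} --- which forces the same conclusion.) In either approach the conceptual content is small precisely because cohomological degree $0$ leaves no room for homotopy corrections; the real labor is the bookkeeping through Epstein's semisimplicial machinery ($\nabla k$, $t$, $\hat{M}$, $E_A$, $\Psi$, $\Theta$) and the matching of its sign and normalization conventions with Drury's.
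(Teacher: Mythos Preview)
Your proposal is correct and takes essentially the same approach as the paper: both reduce the comparison to the degree-$0$ behavior of Epstein's map, verify that the restriction along $\nu^{\otimes p}\colon A^{\otimes p}\hookrightarrow K(S^*A)^{\otimes p}$ followed by evaluation at $e_0$ recovers $\nu\circ m_p$, and then invoke homotopy-uniqueness of $\pi$-equivariant lifts into the bounded-below complex of $\F_p[\pi]$-injectives $\underline{\mc{H}om}_{\F_p}(e^{-1}W,K(S^*A))$. The paper organizes the degree-$0$ verification via a large commutative diagram obtained by applying $Z^0(-)$ to the chain (\ref{epstein-equiv}) and tracking the functor $E_A\hat{M}$ on $\on{Hom}((\nabla k)_0,t_0)\cong\Z$, whereas you argue directly that the Alexander--Whitney and shuffle maps are identities in degree $0$; these are two packagings of the same elementary computation, and the paper closes with \cite[Tag 013S]{stacks-project} where you front-load the homotopy-uniqueness via \cite[Tag 05TG]{stacks-project}.
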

	
	\begin{proof}
		We construct the following commutative diagram of $\pi$-equivariant maps
		\begin{center}
					\adjustbox{scale=0.77}{
				\begin{tikzcd}
					W(p,\Z)_0 \arrow[dd,"\epsilon"] \arrow[r] & \on{Hom}(\nabla k,t) \arrow[dd] \arrow[r] & \on{Hom}(K(S^*A)^{\otimes p}, K((S^*A)^{\otimes p})) \arrow[r] \arrow[d] & \on{Hom}(K(S^*A)^{\otimes p}, K(S^*A)) \arrow[d]  \\
					&& \on{Hom}(A^{\otimes p}, A^{\otimes p}) \arrow[r] \arrow[d]  & \on{Hom}(A^{\otimes p}, A) \arrow[d] \\
					\Z \arrow[r,"\sim"] & \on{Hom}((\nabla k)_0,t_0) \arrow[r] \arrow[ur, dashed] & \on{Hom}(S(A)^{\otimes p}, S(A)^{\otimes p}) \arrow[r] & \on{Hom}(S(A)^{\otimes p}, S(A)).  
				\end{tikzcd}
			}
		\end{center}

		Here, the top row is obtained by applying the functor $Z^0(-)$ of $0$-cocycles to (\ref{epstein-equiv}). 
		
		If $C_1$ and $C_2$ are cochain complexes in an additive category $\mc{A}$, there is a natural map \[\on{Hom}(C_1,C_2)\to \on{Hom}((C_1)^0,(C_2)^0).\] Moreover, if $\mc{A}$ is abelian, this map factors as
		\[\on{Hom}(C_1,C_2)\to \on{Hom}(Z^0(C_1),Z^0(C_2))\to \on{Hom}((C_1)^0,(C_2)^0).\]
		All these homomorphisms are contravariantly functorial in $C_1$ and covariantly functorial in $C_2$. This is how the square on the left and the two squares on the right are constructed. The penthagon of solid arrows in the center is induced by the functor $E_A\hat{M}$. This completes the construction of the solid arrows in the previous diagram.
		
		We have $(\nabla k)_0=t_0=M(0,\dots,0)$ (see \cite[End of p. 213]{epstein1966semisimplicial} for the notation), and the bottom map $\Z\to \on{Hom}((\nabla k)_0,t_0)$ sends $1$ to the identity. Hence the composition $\Z\to \on{Hom}(S(A)^{\otimes p},S(A)^{\otimes p})$ sends $1$ to the identity. Since $\on{id}\in \on{Hom}(A^{\otimes p},A^{\otimes p})$ is $\pi$-invariant, we obtain the dashed arrow. 
		
		The composition $\eta:\Z\to \on{Hom}(A^{\otimes p}, A)$ of the bottom row sends $1$ to the multiplication map $A^{\otimes p}\to A$. Since $A$ is $p$-torsion, the composition of the top row factors through $W_0$, and $\Z\to \on{Hom}(A^{\otimes p},A)$ factors through $\F_p$. This proves that the square of $\pi$-equivariant maps
		\[
		\begin{tikzcd}
		{W}_0 \arrow[r] \arrow[d,"\epsilon"] & \on{Hom}(K(S^*A)^{\otimes p}, K(S^*A)) \arrow[d]  \\
		\F_p \arrow[r,"\eta"] & \on{Hom}(A^{\otimes p},A) 
		\end{tikzcd}
		\]
		is commutative. 
		We have adjunction isomorphisms
		\begin{align*}
		\on{Hom}(\F_p,\underline{\on{Hom}}(A^{\otimes p},A))&= \on{Hom}(e^{-1}\F_p,\underline{\mc{H}}om(A^{\otimes p},A))\\
		&=\on{Hom}(e^{-1}\F_p\otimes A^{\otimes p},A)\\
		&=\on{Hom}(A^{\otimes p},\underline{\mc{H}om}(e^{-1}\F_p,A)),		
		\end{align*}
		which are compatible with those of (\ref{adjunction}). Applying the adjunctions to the above square, we get a commutative square of $\pi$-equivariant homomorphisms
		\[
		\begin{tikzcd}
		K(S^*A)^{\otimes p} \arrow[r] & \underline{\mc{H}om}(e^{-1}W,K(S^*A)) \\
		A^{\otimes p} \arrow[r,"m_p"] \arrow[u]  & A, \arrow[u]
		\end{tikzcd}
		\]
		where the top horizontal map is (\ref{epstein-map}), the left vertical map is a monomorphism in every degree and a quasi-isomorphism, and the right vertical map is a $\pi$-injective resolution. By definition, the map (\ref{drury}) fits in a square (\ref{key-diagram}) of the same form. The conclusion now follows from \cite[Tag 013S]{stacks-project}. 		
	\end{proof}
	
	\begin{prop}\label{steenrod<0}
		Let $A$ be an $\F_p$-algebra on $\mc{T}$. 
	\begin{enumerate}[label=(\alph*)]
		\item All negative Steenrod operations on $\mathbb{H}^*(\mc{T},A)$ are zero.
		\item The operations $\on{Sq}^0$ and $\on{P}^0$ are induced by the Frobenius endomorphism $A\to A$.
	\end{enumerate}		
	\end{prop}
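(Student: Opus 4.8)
The plan is to deduce both assertions from Epstein's theory of Steenrod operations on the cohomology of a cosimplicial $\F_p$-algebra, by means of the comparison in \Cref{epstein=may}. Since the operations on $\mathbb{H}^*(\mc{T},A)$ do not depend on the chosen injective resolution of $A$ (as recorded at the end of \S\ref{definesteenrod}), I would compute them using the standard resolution $A\to K(S^*A)$ of \S\ref{deg0epstein}, so that $K=\Gamma(\mc{T},K(S^*A))=K(\Gamma(\mc{T},S^*A))$ is the cochain complex of the cosimplicial $\F_p$-algebra $\Gamma(\mc{T},S^*A)$. By \Cref{epstein=may} the $\pi$-equivariant map (\ref{drury}) underlying Drury's structure map $\theta$ is $\pi$-homotopic to Epstein's map (\ref{epstein-map}); as the Steenrod operations attached to an object of $\mc{C}(p)$ depend only on the $\pi$-homotopy class of $\theta$, the operations $\on{Sq}^i,\on{P}^i,\beta\on{P}^i$ on $\mathbb{H}^*(\mc{T},A)$ agree with those Epstein associates to $\Gamma(\mc{T},S^*A)$, and it suffices to establish (a) and (b) for the latter.

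For (a), recall that $D_i(x)=\theta_*(e_i\otimes x^{\otimes p})\in H^{pq-i}(K)$ for $x\in H^q(K)$, and that the formulas of \S\ref{def-steenrod} express $\on{Sq}^s(x)$ when $p=2$, resp.\ $\on{P}^s(x)$ and $\beta\on{P}^s(x)$ when $p>2$, as a scalar multiple of $D_i(x)$ with $i=q-s$, resp.\ $i=(q-2s)(p-1)$ and $i=(q-2s)(p-1)-1$. For $s<0$ each of these indices is strictly greater than $(p-1)q$, so it is enough to show $D_i(x)=0$ whenever $i>(p-1)q$. This is Epstein's vanishing statement for the cohomology of a cosimplicial $\F_p$-algebra \cite[\S 8]{epstein1966steenrod}: the resolution $K(S^*A)$ is concentrated in non-negative degrees, and Epstein's (semi)simplicial construction then produces operations only in the usual unstable range. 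This is precisely the point where one uses that $A$ sits in degree $0$; for a general differential graded $A$ the analogous vanishing is false, which is why $\on{Sq}^0$ need not be the identity in general.

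For (b), the case $s=0$ corresponds to the extreme index $i=(p-1)q$: explicitly $\on{Sq}^0(x)=D_q(x)$ when $p=2$, and $\on{P}^0(x)=\nu(-q)D_{(p-1)q}(x)$ when $p>2$. It remains to identify $D_q$, resp.\ $\nu(-q)D_{(p-1)q}$, with the operator induced on $\mathbb{H}^*(\mc{T},A)$ by the Frobenius $a\mapsto a^p$ of $A$ --- equivalently, by any chain-level lift of the Frobenius of the cosimplicial algebra $S^*A$ to $K(S^*A)$ --- and this is Epstein's identification of the top operation \cite[\S 8]{epstein1966steenrod}. It is worth recording the mechanism: restricting $\theta$ along $(\F_p e_0)\otimes K^{\otimes p}\hookrightarrow W\otimes K^{\otimes p}$ computes $D_0(x)=x^p\in H^{pq}(K)$ by condition (i) of \Cref{cp}, and it is the passage through the higher generators $e_1,e_2,\dots$ of the $\pi$-free resolution $W$ that converts the non-additive assignment $x\mapsto x^{\otimes p}$, at the level of $D_{(p-1)q}$, into the $\F_p$-linear Frobenius on $H^q(K)$. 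When $A$ is the constant sheaf $\F_p$ the Frobenius is the identity, so this recovers the classical $\on{Sq}^0=\on{P}^0=\on{id}$, consistently with \Cref{associative}.

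The real difficulty here lies outside this proposition: it is the comparison \Cref{epstein=may}, which is already available. What remains is essentially bookkeeping --- matching Epstein's indexing against May's $D_i$, checking that the scalar $\nu(-q)$ works out so that $\on{P}^0$ is exactly the Frobenius rather than a nonzero multiple of it, and confirming that Epstein's cosimplicial formalism applies verbatim to $\Gamma(\mc{T},S^*A)$. Epstein's identification of the top operation with the Frobenius is the one genuinely substantive ingredient, and I would take it as a black box.
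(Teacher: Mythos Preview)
Your proposal is correct and follows the same strategy as the paper: reduce via \Cref{epstein=may} to Epstein's framework, then invoke Epstein's results on cosimplicial $\F_p$-algebras. The paper carries this out more explicitly---it unwinds the chain (\ref{epstein-equiv}) and cites the precise input \cite[Theorem 5.1.2]{epstein1966semisimplicial}, which computes $\Theta\Psi(e_i)^{q'}$ on the nose (vanishing for $i>q'(p-1)$, and equal to $\nu(q)$ times the diagonal in the extremal case)---whereas you cite the vanishing and Frobenius identification as black boxes from \cite[\S 8]{epstein1966steenrod}; note that the key technical statement actually lives in Epstein's companion paper \cite{epstein1966semisimplicial}, so you should point there instead. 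Your closing aside invoking \Cref{associative} is a bit off: that remark concerns objects with $\theta=\epsilon\otimes m_p$ and says $\on{Sq}(x)=x^2$, which is a different phenomenon from $\on{Sq}^0=\on{id}$.
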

	
	\begin{proof}
		(a) It suffices to show that $D_i(x)=0$ for every $x\in H^q(K(S^*A))$ and every $i>q(p-1)$ ($i\geq (q+2)(p-1)$ would be sufficient). Let $x\in H^q(K(S^*A))$.
		
		By definition, $\Theta\Psi(e_i)\in \prod_j\on{Hom}((\nabla k)_j, t_{i+j})$, thus \[\Theta\Psi(e_i)^{q'}\in \on{Hom}((\nabla k)_{q'},t_{i+q'})=\on{Hom}(M(q',\dots,q'), \oplus_{\sum i_h=i+q'} M(i_1,\dots,i_p)).\] Now, $E_A\hat{M}$ and the multiplication $A^{\otimes p}\to A$ induce a homomorphism \[f^{i,q'}:\on{Hom}(M(q',\dots,q'), \oplus_{\sum i_h=i+q'} M(i_1,\dots,i_p))\to \on{Hom}((K(S^*A)^{\otimes p})^{i+q'}, S^{q'}A).\] By \Cref{epstein=may}, \[\theta_*(e_i\otimes x^{\otimes p})=(f^{i,q'}(\Theta\Psi(e_i)^{q'}))(x^{\otimes p}),\] where $i+q'=pq$, i.e. $q'=pq-i$. It thus suffices to prove that $\Theta\Psi(e_i)^{q'}=0$, where $q'=pq-i$. By \cite[Theorem 5.1.2]{epstein1966semisimplicial}, this holds when $i>q'(p-1)$. This is equivalent to $i>(pq-i)(p-1)$, that is, $i>q(p-1)$, which is true by assumption.
		
		(b) As in \Cref{cp}, in the case $p\neq 2$, for every integer $n$ we let \[\nu(n):=(-1)^{n(n-1)(p-1)/4}((p-1)/2)!.\] Note that $(((p-1)/2)!)^2 \equiv (-1)^{(p+1)/2}\pmod p$, and so $\nu(n)\nu(-n)\equiv 1\pmod p$. By definition, $\on{P}^0(x)=\nu(-q)D_{q(p-1)}(x)$ when $p>2$, and $\on{Sq}^0(x)=D_{q}(x)$ when $p=2$. 
		
		Assume that $p>2$. Let \[f(q)^q:=\Theta\Psi(e_{q(p-1)})^{q}:M(q,\dots,q)\to \oplus_{\sum i_h=pq} M(i_1,\dots,i_p).\] By \cite[Theorem 5.1.2]{epstein1966semisimplicial}, all the components of $f(q)^q$ are zero, except $M(q,\dots,q)\to M(q,\dots,q)$, which is given by multiplication by an integer congruent to $\nu(q)$ modulo $p$. It follows that \[D_{q(p-1)}(x)=\theta_*(e_i\otimes x^{\otimes p})= \nu(q)S^{q(p-1)}(m^{\otimes p})(x^{\otimes p})= \nu(q)S^{q(p-1)}(\on{Fr})(x)\] for every $x\in H^q(K)$. Here $\on{Fr}:A\to A$ denotes the Frobenius endomorphism of $A$. In the last step, we have used the fact that $\on{Fr}= m^{\otimes p}\circ i$, where $i:A\to A^{\otimes p}$ is defined on sections by $a\mapsto a^{\otimes p}$. Following \cite[8.1.4]{epstein1966steenrod}, $S^{i}$ denotes the $(i+1)$-fold iteration of $S$, that is, the degree $i$ component of $S^*$. Therefore \[D_i(x)=\nu(-q)D_{q(p-1)}(x)=\nu(-q)\nu(q)S^{q(p-1)}(\on{Fr})(x)=S^{q(p-1)}(\on{Fr})(x),\] as desired. A similar argument (with no sign issues) works when $p=2$.
	\end{proof}

	\section{Steenrod operations on \v{C}ech cohomology}\label{cech}

\subsection{The Eilenberg-Zilber operad}

Let $R$ be a commutative ring with identity. 
For every $n\geq 0$, let $\Delta^n$ be the standard $n$-dimensional simplicial set, and denote by $R[\Delta^n]$ the free simplicial $R$-module on $\Delta^n$. We let $\Lambda$ and $\Lambda_N$ be the cosimplicial cochain complexes of $R$-modules such that
 \[\Lambda^n:=K(R[\Delta^n]),\qquad \Lambda_N^n:=K_N(R[\Delta^n])\] for all $n\geq 0$. Here $K(-)$ and $K_N(-)$ are the unnormalized and normalized cochain complex functors, respectively. For all $n\geq 0$, $\Lambda^n$ and $\Lambda_N^n$ are non-positively graded, $\Lambda_N^n$ is bounded, and $\Lambda^n$ is unbounded in the negative direction. 

Let $\mc{T}$ be a topos. If $M$ and $N$ are two cosimplicial cochain complexes of $R$-modules on $\mc{T}$, we define a cochain complex $\underline{\mc{H}om}_{\Delta}(M,N)$ of $R$-modules on $\mc{T}$ as the equalizer of \[\prod_{r\geq 0}\underline{\mc{H}om}(M^r,N^r)\rightrightarrows \prod_{[r]\to [s]} \underline{\mc{H}om}(M^r,N^s),\] where the second product is over all arrows in the simplicial category $\Delta$, and the two maps are induced by pre-composition and post-composition. This definition is a special case of \cite[(2.1)]{may2003operads}. When $\mc{T}=\mc{S}et$, so that $M$ and $N$ are cosimplicial cochain complexes of $R$-modules, we denote $\underline{\mc{H}om}_{\Delta}(M,N)$ by $\underline{\on{Hom}}_{\Delta}(M,N)$. 

Recall that we write $(e_*,e^{-1}):\mc{T}\to \mc{S}et$ for the morphism of topoi such that $e_*=\Gamma(\mc{T},-)$ and $e^{-1}$ is the constant sheaf functor.
\begin{lemma}\label{totalization}
	Let $C$ be a cosimplicial non-negatively graded cochain complex of $R$-modules on $\mc{T}$. Then we have a commutative diagram
	\[
	\begin{tikzcd}
	\on{Tot}(K_N(C)) \arrow[r] \arrow[d,"\wr"] & \on{Tot}(K(C)) \arrow[d,"\wr"] \\
	\underline{\mc{H}om}_{\Delta}(e^{-1}\Lambda_N,C) \arrow[r] & \underline{\mc{H}om}_{\Delta}(e^{-1}\Lambda,C), 
	\end{tikzcd}
	\]
	where the vertical arrows are isomorphisms, and the horizontal arrows are cochain homotopy equivalences.
\end{lemma}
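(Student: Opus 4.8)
The plan is to realize the two vertical maps as explicit isomorphisms by unwinding the equalizer defining $\underline{\mc{H}om}_\Delta$, then to observe that the resulting square commutes on the nose, and finally to deduce that both horizontal maps are cochain homotopy equivalences from the classical fact that the normalization of a simplicial $R$-module is a natural chain homotopy equivalence. Here $K(C)$ and $K_N(C)$ denote the first-quadrant double complexes obtained by applying $K$, resp.\ $K_N$, in the cosimplicial direction, $\on{Tot}$ is the associated total complex (a finite direct sum in each degree), the top arrow is induced by the inclusion of the normalized double subcomplex $K_N(C)\hookrightarrow K(C)$, where $(K_N C)^r=\bigcap_i\ker(\sigma^i\colon C^r\to C^{r-1})$, and the bottom arrow is precomposition with the natural comparison map $q\colon\Lambda\to\Lambda_N$, $q^r\colon K(R[\Delta^r])\to K_N(R[\Delta^r])$.

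For the vertical isomorphisms, fix a degree $n$ and unwind $\underline{\mc{H}om}_\Delta(e^{-1}\Lambda,C)^n$: an element is a family of $R$-linear sheaf maps $\phi^r\colon e^{-1}K(R[\Delta^r])\to C^r$ of degree $n$, natural in $[r]\in\Delta$. Because $K(R[\Delta^r])_j$ is $e^{-1}$ of the free $R$-module on the finite set $\Delta([j],[r])$ of $j$-simplices of $\Delta^r$, such a $\phi$ is the same as a compatible collection of sections of the $(C^r)^{n-j}$ indexed by simplices $\alpha\colon[j]\to[r]$, and the naturality in $[r]$ forces $\phi^r(\alpha)=C(\alpha)(x_\ell)$ for every $\alpha\colon[\ell]\to[r]$, where $x_\ell:=\phi^\ell(\on{id}_{[\ell]})\in(C^\ell)^{n-\ell}$ is the value on the universal $\ell$-simplex; conversely every family $(x_\ell)_{\ell\ge 0}$ (necessarily with $x_\ell=0$ for $\ell>n$, since $C^\ell$ is non-negatively graded) arises this way. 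This identifies $\underline{\mc{H}om}_\Delta(e^{-1}\Lambda,C)^n$ with $\bigoplus_\ell(C^\ell)^{n-\ell}=\on{Tot}(K(C))^n$, naturally in $C$; since the differential of $K(R[\Delta^r])$ sends $\on{id}_{[\ell]}$ to the alternating sum of cofaces $\sum_i(-1)^i\delta^i$, under this identification the part of the differential coming from $\Lambda$ becomes the cosimplicial codifferential while the part coming from $C$ is unchanged, so (up to the sign conventions fixed earlier) we obtain an isomorphism of cochain complexes. Running the same computation with $\Lambda_N$ in place of $\Lambda$, the universal simplices are still non-degenerate, but now the degenerate simplices of each $\Delta^r$ are annihilated in $K_N(R[\Delta^r])$, so naturality additionally forces $C(g)(x_\ell)=0$ for every non-injective $g\colon[\ell]\to[m]$ in $\Delta$; factoring such a $g$ through a codegeneracy shows this is equivalent to $x_\ell\in\bigcap_i\ker\sigma^i=(K_N C)^\ell$. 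Hence $\underline{\mc{H}om}_\Delta(e^{-1}\Lambda_N,C)\cong\on{Tot}(K_N(C))$, which is the left vertical isomorphism.

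The square commutes because the comparison map $q^r$ fixes the universal simplex $\on{id}_{[r]}$ (an identity map is non-degenerate, hence survives normalization): thus $q^*$ sends the natural family with data $(x_\ell)$ to the family with the same data, and under the vertical isomorphisms this is precisely the map on total complexes induced by the inclusions $(K_N C)^\ell\hookrightarrow C^\ell$, i.e.\ the top horizontal arrow. It therefore suffices to show that the bottom arrow is a cochain homotopy equivalence, and the top arrow will then be one as well since the verticals are isomorphisms. For each $r$, the normalization $q^r$ is a chain homotopy equivalence, and it comes with a homotopy inverse together with chain homotopies that are natural in the simplicial $R$-module $R[\Delta^r]$ --- this is the classical Eilenberg--Mac Lane normalization, see \cite[Tags 019D, 019H]{stacks-project} --- and hence natural in $[r]$. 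Applying $e^{-1}$, this becomes a homotopy equivalence datum in the category of cosimplicial cochain complexes of $R$-modules on $\mc{T}$; since a natural cosimplicial chain homotopy of degree $-1$ on $\Lambda$ induces a chain homotopy on $\underline{\mc{H}om}_\Delta(\Lambda,C)$, precomposition into $\underline{\mc{H}om}_\Delta(-,C)$ carries this datum to one exhibiting $q^*$ as a cochain homotopy equivalence.

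I expect the main technical point to be the differential and sign bookkeeping in the co-Yoneda computation of the vertical maps, together with the verification that the normalized-complex constraint translates precisely into membership in $\bigcap_i\ker\sigma^i$; by contrast, that the horizontal maps are cochain homotopy equivalences is a formal consequence of the naturality of the Dold--Kan normalization.
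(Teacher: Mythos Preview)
Your proof is correct and follows essentially the same approach as the paper: the vertical isomorphisms are obtained by the same co-Yoneda argument (the paper's inverse via projection onto the factors indexed by the fundamental classes $\sigma_i\in\Delta^i_i$ is exactly your evaluation at $\on{id}_{[\ell]}$), and the $\Lambda_N$ case is handled by restricting to non-degenerate simplices. The only variation is that for the horizontal homotopy equivalences the paper applies the Dold--Kan equivalence directly to the cosimplicial object $C$ (in the abelian category of cochain complexes) and then totalizes, whereas you apply it on the $\Lambda$ side and transport through $\underline{\mc{H}om}_\Delta(-,C)$; this is a minor and equally valid variant.
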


\begin{proof}
	The cochain complex $\underline{\mc{H}om}_{\Delta}(e^{-1}\Lambda,C)$ is given in degree $n$ by the equalizer of
	\[\prod_{r\geq 0}\prod_{q\in \Z} \mc{H}om(e^{-1}K\Z[\Delta^r]^q,(C^r)^{q+n})\rightrightarrows \prod_{[r]\to [s]}\prod_{q\in \Z}\mc{H}om(e^{-1}K\Z[\Delta^r]^q,(C^s)^{q+n}).\] 
	If $M$ is an $R$-module on $\mc{T}$, then $\mc{H}om (e^{-1}R,M)\cong M$. It follows that \[\mc{H}om(e^{-1}K\Z[\Delta^r]^q,(C^s)^{q+n})\cong ((C^s)^{q+n})^{\oplus \Delta^r_{-q}}\] is a direct sum of copies of $(C^s)^{q+n}$, parametrized by the set $\Delta^r_{-q}$ of $(-q)$-dimensional simplices of $\Delta^r$. Thus $\underline{\mc{H}om}_{\Delta}(e^{-1}\Lambda,C)$ in degree $n$ is the equalizer of
	\[\prod_{r\geq 0}\prod_{q\in \Z} ((C^r)^{q+n})^{\oplus \Delta^r_{-q}}\rightrightarrows \prod_{[r]\to [s]}\prod_{q\in \Z}((C^s)^{q+n})^{\oplus \Delta^r_{-q}}.\]
	We now construct the map $\on{Tot}(K(C))\to \underline{\mc{H}om}_{\Delta}(e^{-1}\Lambda,C)$ in degree $n$. For every integer $0\leq i\leq n$, let $\sigma_i\in \Delta^i_i$ be the fundamental class. If $r$ is another integer and $\sigma\in \Delta^r_i$, we define $(C^{i})^{n-i}\to ((C^r)^{n-i})^{\oplus \Delta^r_{i}}$ on the component relative to $\sigma$ as the homomorphism induced by the map $[i]\to [r]$ given by $\sigma_i\mapsto \sigma$.
	We set $(C^{i})^{n-i}\to ((C^r)^{q+n})^{\oplus \Delta^r_{-q}}$ equal to zero when $q\neq -i$. These maps assemble to a homomorphism from $\on{Tot}(K(C))$ to the product on the left which respects the equalizer condition. We thus get a homomorphism \[\on{Tot}K(C)\to \underline{\mc{H}om}_{\Delta}(e^{-1}\Lambda,C).\] It is an isomorphism, whose inverse is induced by projection onto the factors corresponding to the fundamental classes of the $\Delta^i_i$. The proof for $K_N(C)$ is entirely analogous, with the difference that one only works with non-degenerate simplices. The commutativity of the diagram is then obvious. 
	
	If $A$ is a cosimplicial object of an abelian category, the map $K_N(A)\to K(A)$ is a homotopy equivalence. Applying this to the category of cochain complexes of $R$-modules on $\mc{T}$, we see that $K_N(C)\to K(C)$ is a homotopy equivalence of double complexes (that is, complexes of complexes). Passing to total complexes, we obtain the required cochain homotopy equivalence $\on{Tot}(K_N(C))\to \on{Tot}(K(C))$. 
\end{proof}

 We define $\mc{Z}:=\on{End}(\Lambda)$, the endomorphism operad of $\Lambda$ in the category of cochain complexes of $R$-modules; see \cite[Construction 2.3]{may2003operads}. Similarly, we define $\mc{Z}_N:=\on{End}(\Lambda_N)$. 
By definition,  for every $j\geq 0$ we have cochain complexes \[\mc{Z}(j)=\on{Hom}_{\Delta}(\Lambda,\Lambda^{\otimes j}),\qquad \mc{Z}_N(j)=\on{Hom}_{\Delta}(\Lambda_N,\Lambda_N^{\otimes j})\] which are concentrated in non-positive degrees.

\begin{lemma}\label{norm-unnorm-qi}
	For every $j\geq 0$, we have a homotopy equivalence $\mc{Z}_N(j)\to \mc{Z}(j)$. 
\end{lemma}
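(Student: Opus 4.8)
The plan is to deduce the homotopy equivalence $\mc{Z}_N(j)\to \mc{Z}(j)$ from the fact that $\Lambda_N\to\Lambda$ is a levelwise homotopy equivalence of cosimplicial cochain complexes, together with the fact that each $\Lambda^n$ (and $\Lambda_N^n$) is a complex of free $R$-modules that is bounded above. First I would recall that for each $n\geq 0$ the map $K_N(R[\Delta^n])\to K(R[\Delta^n])$, i.e. $\Lambda_N^n\to\Lambda^n$, is a homotopy equivalence of cochain complexes of $R$-modules; this is the classical normalization theorem (\cite[Tag 0194]{stacks-project} or the Dold--Kan package), and the homotopy inverse and the homotopies can be taken functorially in $[n]$, so they are compatible with the cosimplicial structure maps. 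Dualizing, for each fixed $j$ this induces on the $\underline{\on{Hom}}_\Delta$-complexes the two comparison maps
\[
\underline{\on{Hom}}_\Delta(\Lambda,\Lambda^{\otimes j})\;\longrightarrow\;\underline{\on{Hom}}_\Delta(\Lambda_N,\Lambda^{\otimes j})\;\longleftarrow\;\underline{\on{Hom}}_\Delta(\Lambda_N,\Lambda_N^{\otimes j}),
\]
where the left arrow is precomposition with $\Lambda_N\to\Lambda$ and the right arrow is postcomposition with $\Lambda_N^{\otimes j}\to\Lambda^{\otimes j}$. It therefore suffices to show that each of these two maps is a homotopy equivalence, and then compose one with a homotopy inverse of the other.

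The key point for both arrows is that a levelwise homotopy equivalence of cosimplicial cochain complexes $C\to D$ (with a cosimplicial homotopy inverse and cosimplicial homotopies) induces a homotopy equivalence $\underline{\on{Hom}}_\Delta(D,E)\to\underline{\on{Hom}}_\Delta(C,E)$ and $\underline{\on{Hom}}_\Delta(C,C')\to\underline{\on{Hom}}_\Delta(C,D')$ for any $E$ (resp. for a levelwise homotopy equivalence $C'\to D'$). Concretely, one applies the functor $\underline{\on{Hom}}_\Delta(-,E)$ (which, being an equalizer of products of $\underline{\on{Hom}}(-,E^r)$'s, is additive and sends levelwise homotopies to homotopies, since a cosimplicial homotopy $C\times\Delta^1\Rightarrow D$ — equivalently a degree $-1$ map commuting with coface/codegeneracy in the appropriate sense — dualizes to a chain homotopy on the $\underline{\on{Hom}}_\Delta$ level because the $\underline{\on{Hom}}_\Delta$ construction is exactly $\prod_r$ of internal Homs, cut out by the equalizer, and chain homotopies are preserved by additive functors). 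The only subtlety is ensuring that the homotopy inverse $\Lambda\to\Lambda_N$ and the homotopies produced by the normalization theorem really are morphisms/homotopies of cosimplicial objects, i.e. natural in $[n]$; this naturality is standard (the Dold--Kan maps $N$, the inclusion, and the homotopy $h$ are all defined by universal formulas in terms of faces and degeneracies), so it goes through. One also uses that $\Lambda_N^{\otimes j}\to\Lambda^{\otimes j}$ is again a levelwise homotopy equivalence — this follows because the tensor product of homotopy equivalences of complexes of flat (here, free) $R$-modules is a homotopy equivalence, and again all the data is functorial in $[n]$.

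Assembling: the left arrow above is a homotopy equivalence by naturality-in-$[n]$ of the normalization homotopy equivalence $\Lambda_N\to\Lambda$ applied in the first variable; the right arrow is a homotopy equivalence by the same fact applied $j$-fold in the second variable (using that $\Lambda_N^{\otimes j}\to\Lambda^{\otimes j}$ is a levelwise homotopy equivalence). Composing the left arrow with a homotopy inverse of the right arrow yields the desired homotopy equivalence $\mc{Z}(j)=\underline{\on{Hom}}_\Delta(\Lambda,\Lambda^{\otimes j})\to\underline{\on{Hom}}_\Delta(\Lambda_N,\Lambda_N^{\otimes j})=\mc{Z}_N(j)$, and one takes its homotopy inverse (or directly builds the map the other way) to get $\mc{Z}_N(j)\to\mc{Z}(j)$. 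The main obstacle I anticipate is purely bookkeeping: writing the normalization comparison data in a manifestly cosimplicial (natural-in-$[n]$) form and checking that the equalizer-of-products functor $\underline{\on{Hom}}_\Delta(-,-)$ transports cosimplicial homotopies to genuine cochain homotopies; there is no conceptual difficulty, but the indices and the interplay of the equalizer condition with the homotopy operators must be handled carefully. No boundedness or finiteness hypothesis is actually needed beyond freeness of the $R[\Delta^n]$ (so tensor powers behave), since we only ever manipulate explicit homotopies rather than invoking a spectral sequence or acyclicity argument.
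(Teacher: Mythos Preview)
Your approach is correct and takes a somewhat different route from the paper's. The paper first invokes \Cref{totalization} to identify $\mc{Z}_N(j)\cong\on{Tot}(K_N(\Lambda_N^{\otimes j}))$ and $\mc{Z}(j)\cong\on{Tot}(K(\Lambda^{\otimes j}))$, and then shows the composite $\on{Tot}(K_N(\Lambda_N^{\otimes j}))\to\on{Tot}(K(\Lambda_N^{\otimes j}))\to\on{Tot}(K(\Lambda^{\otimes j}))$ is a homotopy equivalence by working with the explicit direct-sum decompositions $K(-)=K_N(-)\oplus D(-)$ and $\Lambda=\Lambda_N\oplus\Lambda'$, together with Alexander--Whitney to reduce $K(U_1\otimes\cdots\otimes U_j)$ to $K(U_1)\otimes\cdots\otimes K(U_j)$. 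You instead stay at the level of $\underline{\on{Hom}}_\Delta$ and use only that the Dold--Kan comparison $\Lambda_N\to\Lambda$ has a homotopy inverse and homotopies which are \emph{natural in the simplicial module}, hence in $[n]$; then pre- and postcomposition by such data preserves the equalizer condition and yields honest chain homotopies on $\underline{\on{Hom}}_\Delta$. Both arguments rest on the same naturality fact (the paper also cites Goerss--Jardine for it), but yours is more functorial and avoids the Alexander--Whitney detour, while the paper's is more explicit about which summands die.

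One small cleanup: your parenthetical ``a cosimplicial homotopy $C\times\Delta^1\Rightarrow D$'' is not the notion you actually use or need. What you need (and what you correctly describe afterward) is a family of degree $-1$ chain maps $h_n:C^n\to D^n$ commuting with all cosimplicial structure maps; this is strictly weaker than a simplicial-style homotopy and is exactly what the functoriality of the Dold--Kan retraction provides. With that clarified, the argument that $(\phi_n)\mapsto(\phi_n\circ h_n)$ lands in the equalizer and furnishes the required chain homotopy on $\underline{\on{Hom}}_\Delta$ goes through without issue.
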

The maps of \Cref{norm-unnorm-qi} assemble to a quasi-isomorphism of operads $\mc{Z}_N\to \mc{Z}$, but we will not need this.
\begin{proof}
	By \Cref{totalization}, it is enough to show the composition
	\begin{equation}\label{norm-unnorm}
	\on{Tot}(K_N(\Lambda_N^{\otimes j}))\to \on{Tot}(K(\Lambda_N^{\otimes j}))\to \on{Tot}(K(\Lambda^{\otimes j})).
	\end{equation}
	is a homotopy equivalence.
	
	If $A$ is a cosimplicial object in an abelian category $\mc{A}$, by the Dold-Kan correspondence we have a decomposition $K(A)=K_N(A)\oplus D(A)$ in the category of cochain complexes of $\mc{A}$; see \cite[Tag 019I]{stacks-project} or \cite[Theorem III.2.5]{goerss2009simplicial}. The complex $D(A)$ is homotopically equivalent to zero, and so the inclusion $K(A)\to K_N(A)$ is a cochain homotopy equivalence, natural in $A$. The cochain homotopy between the identity on $K(A)$ and the composition $K(A)\to K_N(A)\to K(A)$ is also natural in $A$; see the paragraph preceding \cite[Theorem 2.4]{goerss2009simplicial}.
	
	We apply this to the case when $\mc{A}$ is the category of cochain complexes of $R$-modules, and then pass to total complexes. If a bicomplex is homotopy equivalent to zero (as a complex in the category of complexes), then its totalization is also homotopy equivalent to zero. Thus, if $C$ is a cosimplicial $R$-module, we have a decomposition $\on{Tot}K(C)=\on{Tot}K_N(C)\oplus \on{Tot}D(C)$, and  $\on{Tot}D(C)$ is homotopically equivalent to zero. Letting $C=\Lambda_N^{\otimes j}$, we deduce that the first map in (\ref{norm-unnorm}) is a homotopy equivalence. 
	
	We have a decomposition $\Lambda=\Lambda_N\oplus \Lambda'$. (For every $n\geq 0$, $(\Lambda')^n$ is the subcomplex of $K(R[\Delta^n])$ generated by degenerate simplices.) Thus $\Lambda^{\otimes j}=\Lambda_N^{\otimes j}\oplus U$, where $U$ is a direct sum of terms of the form $U_1\otimes\cdots\otimes U_j$, where $U_h\in \set{\Lambda_N,\Lambda'}$ and at least one of the $U_h$ equals $\Lambda'$. Passing to total complexes, we see that in order to prove that the second map is a homotopy equivalence, it suffices to show that each $K(U_1\otimes\cdots\otimes U_j)$ is homotopy equivalent to zero (as a complex of complexes), for then $\on{Tot}(K(U_1\otimes\cdots\otimes U_j))$ will also be homotopically trivial. The classical Alexander-Whitney map gives a homotopy equivalence between $K(U_1\otimes\cdots\otimes U_j)$ and $K(U_1)\otimes\cdots \otimes K(U_j)$, so it suffices to show that the latter are homotopically trivial. 
	
	Since the homotopies in the Dold-Kan correspondence are functorial, $K(\Lambda')$ is homotopically trivial (as a complex in the category of complexes). Let $M$, $N$ and $N'$ be double complexes, and let $f:N\to N'$ be a homotopy equivalence in the category of complexes of complexes: $f^n:N^n\to (N')^n$ is a homotopy equivalence for every $n$, and the homotopies commute with vertical differentials. Then $\on{id}\otimes f:M\otimes N\to M\otimes N'$ is also a homotopy equivalence (here $\otimes$ is the tensor product in the category of complexes of complexes). Since $K(\Lambda')$ is homotopically trivial and at least one $U_h$ is equal to $\Lambda'$,  we conclude that each $K(U_1)\otimes\cdots\otimes K(U_j) $ is homotopically equivalent to zero (as a complex of complexes), as desired. We conclude that the $K(U_1)\otimes\cdots \otimes K(U_j)$ are homotopically trivial, and so the second map of (\ref{norm-unnorm}) is also a homotopy equivalence.
\end{proof}

Note that the operad defined in \cite[Definition 3.1]{may2003operads} is denoted there by $\mc{Z}$,  but it coincides with our $\mc{Z}_N$. By a result of Mandell \cite[Proposition 3.2]{may2003operads}, for all $j\geq 0$ there is an augmentation map $\mc{Z}_N(j)\to R$ which is a quasi-isomorphism. By \cite[Proposition 3.3]{may2003operads}, there are an $E_{\infty}$-operad $\mc{E}$ and a quasi-isomorphism of operads $\alpha_N:\mc{E}\to \mc{Z}_N$. In particular, $\mc{E}(j)$ is an $R[\Sigma_j]$-free resolution of $R$ for every $j\geq 0$, concentrated in non-positive degrees, and we have a $\Sigma_j$-equivariant quasi-isomorphism of cochain complexes $\alpha_N(j):\mc{E}(j)\to \mc{Z}_N(j)$.

Pre-composing with $\alpha_N(j)$, we obtain a $\Sigma_j$-equivariant quasi-isomorphism
\[\alpha(j):\mc{E}(j)\to \mc{Z}(j).\]
Since $W(p,R)$ and $\mc{E}(p)$ are both $R[\pi]$-free resolutions of $R$, by \cite[Tag 0649]{stacks-project} there exists a $\pi$-homotopy equivalence $W(p,R)\to \mc{E}(p)$ commuting with the augmentations. The composition
\begin{equation}\label{w-e-z}
W(p,R)\to \mc{E}(p)\xrightarrow{\alpha(p)}\mc{Z}(p)
\end{equation}
is a $\pi$-equivariant quasi-isomorphism. Since $e^{-1}$ is exact (it commutes with colimits and finite limits), $e^{-1}\alpha(p)$ and the pullback of (\ref{w-e-z}) are also quasi-isomorphisms.

	\subsection{The relative \v{C}ech complex}
In \cite{may2003operads}, May defined an $E_{\infty}$-algebra structure on \v{C}ech cochains, thus obtaining Steenrod operations on the \v{C}ech cohomology of sheaves. We will use this alternative definition to construct the Bockstein homomorphism and during the proof of \Cref{mainthm}(a). Note that May works in the category of sheaves on a topological space, but, as we explain below, his arguments may be easily adapted to the setting of sheaves on a site; this is explicitly mentioned in \cite[\S 4, p. 9 and Remark 5.10]{may2003operads}. 

Let $\mc{T}$ be a topos, let $R$ be a commutative ring with identity, and let $F$ be a cochain complex of $R$-modules on $\mc{T}$. We fix an equivalence of $\mc{T}$ with the category of sheaves on a site $\mc{S}$ admitting a terminal object; by Giraud's criterion \cite[IV, Th\'eor\`eme 1.2(i')]{sga4I} such $\mc{S}$ always exists (it is usually straightforward to exhibit a concrete $\mc{S}$ in practice). We view $F$ as a complex of sheaves of $R$-modules on $\mc{S}$ via this equivalence. 

Let $e$ be a terminal object of $\mc{S}$, and let $U$ be the \v{C}ech nerve (that is, the $0$-coskeleton) of a cover of $e$ in $\mc{S}$. For every $r\geq 0$ let $\eta_r:U_r\to e$ denote the unique map to $e$, and let $(\eta_{r*},\eta_r^{-1}):\mc{T}/U_r\to \mc{T}$ be the associated morphism of topoi. We define $\check{\mc{C}}_0(U,F)$ as the cosimplicial complex of sheaves on $\mc{S}$ such that \[\check{\mc{C}}_0(U,F)^{r,s}:=\eta_{r*}(\eta_r^{-1}(F^s)),\] and whose differentials are induced from those of $F$ and the degeneracy maps of $U$. Similarly, we denote by $\check{\mc{C}}(U,F):=K(\check{\mc{C}}_0(U,F))$ the bicomplex of sheaves on $\mc{S}$ such that \[\check{\mc{C}}(U,F)^{r,s}:=\eta_{r*}(\eta_r^{-1}(F^s)),\] and whose differentials are induced from those of $F$ and the alternating sums of the face maps of $U$. We set \[\check{C}_0(U,F):=\Gamma(\mc{T},\check{\mc{C}}_0(U,F)),\qquad \check{C}(U,F):=\Gamma(\mc{T},\check{\mc{C}}(U,F)),\] the global section functor being applied level-wise. Thus, for all $r,s\geq 0$,
\[\check{C}(U,F)^{r,s}=\Gamma(\mc{T},\eta_{r*}(\eta_r^{-1}(F^s)))=\Gamma(\mc{T}/U_r,\eta_r^{-1}(F^s))=\Gamma(U_r,F^s).\]
For every $s\geq 0$, the cochain complex $\check{\mc{C}}(U,F)^{*,s}$ is the relative \v{C}ech complex of $F^s$; see \cite[Tag 06X7]{stacks-project}. The unit maps $F^s\to \eta_{0*}(\eta_0^{-1}(F^s))$ induce a homomorphism of bicomplexes $F\to \check{\mc{C}}(U,F)$ which is natural in $F$; here $F$ is regarded as a bicomplex with exactly one non-zero row. Passing to total complexes, we obtain a homomorphism of complexes of sheaves \[\iota_F: F\to \on{Tot}\check{\mc{C}}(U,F).\] By \cite[Lemma 2.4.18]{olsson2016algebraic}, for every $s$ the map $F^s\to \check{\mc{C}}(U,F)^{*,s}$ is a quasi-isomorphism, hence $\iota_F$ is a quasi-isomorphism.

If $F'$ is another cochain complex of $R$-modules on $\mc{T}$, we have a natural homomorphism
\begin{equation}\label{tot-is-lax}
\cup:\on{Tot}\check{\mc{C}}(U,F)\otimes \on{Tot}\check{\mc{C}}(U,F')\to \on{Tot}\check{\mc{C}}(U,F\otimes F')
\end{equation}
The map (\ref{tot-is-lax}) is compatible with $\iota_F,\iota_{F'}$ and $\iota_{F\otimes F'}$ in an obvious way. 

Let $j\geq 0$ be an integer, and let $C$ be a cosimplicial complex of $R$-modules on $\mc{T}$. The Alexander-Whitney map is a homomorphism 
\begin{equation}\label{alex-whitney}e^{-1}\mc{Z}(j)\otimes (\on{Tot}K(C))^{\otimes j}\to \on{Tot}K(C^{\otimes j}),\end{equation}
where on the right we are taking tensor products of cosimplicial objects. Under the identifications provided by \Cref{totalization}, (\ref{alex-whitney}) is given by the composition
\begin{align*}
\underline{\mc{H}om}_{\Delta}(e^{-1}\Lambda,e^{-1}\Lambda^{\otimes j})\otimes \underline{\mc{H}om}_{\Delta}(e^{-1}\Lambda&,C)^{\otimes j} \\ \to \underline{\mc{H}om}_{\Delta}(e^{-1}\Lambda,e^{-1}\Lambda^{\otimes j})&\otimes \underline{\mc{H}om}_{\Delta}(e^{-1}\Lambda^{\otimes j},C^{\otimes j})\\ \to  \underline{\mc{H}om}_{\Delta}&(e^{-1}\Lambda^{\otimes j},C^{\otimes j})\otimes \underline{\mc{H}om}_{\Delta}(e^{-1}\Lambda,e^{-1}\Lambda^{\otimes j})\\ &\to \underline{\mc{H}om}_{\Delta}(e^{-1}\Lambda,C^{\otimes j}),
\end{align*}
where the first map is induced by the $j$-fold tensor product, the second is the (graded) switch isomorphism, and the third is induced by composition. This is a special case of \cite[Construction 2.6]{may2003operads} (to see this, note that $e^{-1}$ commutes with finite limits, and so $e^{-1}\on{End}(\Lambda)(j)=\on{End}(e^{-1}\Lambda)(j)$).
Of course, if $\mc{T}$ is the set topos, so that $C$ is just a cosimplicial cochain complex of $R$-modules, (\ref{alex-whitney}) takes the form
\begin{equation}\label{alex-whitney-set}\mc{Z}(j)\otimes (\on{Tot}K(C))^{\otimes j}\to \on{Tot}K(C^{\otimes j}).\end{equation}

\subsection{May's construction}

	Let $A$ be an commutative differential graded $R$-algebra on $\mc{T}$. 
	We have a $\Sigma_p$-equivariant homomorphism\footnote{In \cite[Theorem 5.5]{may2003operads} a similar map is constructed, with $\mc{Z}$ replaced by $\mc{Z}_N$ and $\check{C}(U,A)$ replaced by the totalization of the restricted \v{C}ech complex. The homomorphism (\ref{zp}) is the reason why we use $\mc{Z}$ instead of $\mc{Z}_N$. The operad $\mc{Z}_N$ only acts on the restricted \v{C}ech cochains (see \cite[(4.2)]{may2003operads}), which are only useful when the covers are monomorphisms (e.g. open embeddings); see e.g. \cite[III, Remark 2.2(d)]{milne1980etale}.}
	\begin{equation}\label{zp}\mc{Z}(p)\otimes \on{Tot}\check{C}(U,A)^{\otimes p}\to \on{Tot}\check{C}(U,A).\end{equation}
	Under the identifications of \Cref{totalization}, (\ref{zp}) is defined as the composition
\begin{align*}
\underline{\on{Hom}}_{\Delta}(\Lambda,\Lambda^{\otimes p})\otimes \underline{\on{Hom}}_{\Delta}(\Lambda,\check{C}_0(U,A))^{\otimes p}& \\ \to \underline{\on{Hom}}_{\Delta}(\Lambda, \check{C}_0&(U,A)^{\otimes p}) \\ \to \underline{\on{Hom}}&_{\Delta}(\Lambda, \check{C}_0(U,A^{\otimes p})) \\ &\to \underline{\on{Hom}}_{\Delta}(\Lambda, \check{C}_0(U,A)),
\end{align*}	
	where the first homomorphism is the Alexander-Whitney map, the second homomorphism is induced by the natural map $\check{C}_0(U,A)^{\otimes p}\to \check{C}_0(U,A^{\otimes p})$, and the third homomorphism is induced by the multiplication map $A^{\otimes p}\to A$. 
	
	\begin{rmk}\label{remark-functorial}
	It is clear that (\ref{alex-whitney}) is functorial in $C$, and that (\ref{zp}) is functorial in $A$. 
	
	Let $\mc{T}'$ be another topos, let $e'$ be a terminal object in some site with associated topos equivalent to $\mc{T}'$, and let $\eta':U'\to e'$ be the \v{C}ech resolution of a cover of $e'$. Let $\phi:\check{C}_0(U,R)\to \check{C}_0(U',R)$ be a map of cosimplicial (cochain) complexes, such that for every $n\geq 0$ the map $\Gamma(\mc{T},\eta_{n*}\eta_n^{-1}R)\to \Gamma(\mc{T}',\eta'_{n*}(\eta_n')^{-1}R)$ is a homomorphism of $R$-algebras. Since (\ref{alex-whitney-set}) is functorial in $C$, it is compatible with $\phi$. Moreover, thanks to the additional assumptions on $\phi$, the second and third map in the definition of (\ref{zp}) are also compatible with $\phi$. In other words, we have a commutative diagram
	\[
	\begin{tikzcd}
	\mc{Z}(p)\otimes \on{Tot}\check{C}(U,R)^{\otimes p}\arrow[r] \arrow[d] &  \on{Tot}\check{C}(U,R)\arrow[d] \\
	\mc{Z}(p)\otimes \on{Tot}\check{C}(U',R)^{\otimes p}\arrow[r] &  \on{Tot}\check{C}(U',R),
	\end{tikzcd}
	\]
	where the horizontal arrows are the Alexander-Whitney maps, and the vertical maps are induced by $\phi$. We will use this remark in the proof of \Cref{mainthm}(a).
	\end{rmk}
	
		We now show that (\ref{zp}) is induced by a morphism of sheaves. 
	
	\begin{lemma}
		There exists a morphism of complexes of sheaves of $R$-modules \[e^{-1}\mc{Z}(p)\otimes \on{Tot}\check{\mc{C}}(U,A)^{\otimes p}\to \on{Tot}\check{\mc{C}}(U,A)\] such that the composition 
		\[\mc{Z}(p)\otimes \on{Tot}\check{C}(U,A)^{\otimes p}\to \Gamma(\mc{T},e^{-1}\mc{Z}(p))\otimes \Gamma(\mc{T},\on{Tot}\check{\mc{C}}(U,A)^{\otimes p})\to \on{Tot}\check{C}(U,A)\] obtained by passage to global sections coincides with (\ref{zp}).
	\end{lemma}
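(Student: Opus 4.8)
The plan is to repeat the three-step construction of (\ref{zp}) directly at the level of sheaves on $\mc{S}$, and then to check that applying the global sections functor $\Gamma(\mc{T},-)$ returns (\ref{zp}) on the nose. I start with the Alexander--Whitney map (\ref{alex-whitney}) in the case $j=p$, $C=\check{\mc{C}}_0(U,A)$, which is a $\Sigma_p$-equivariant morphism of complexes of sheaves of $R$-modules
\[e^{-1}\mc{Z}(p)\otimes \on{Tot}\check{\mc{C}}(U,A)^{\otimes p}\longrightarrow \on{Tot}K\bigl(\check{\mc{C}}_0(U,A)^{\otimes p}\bigr).\]
For each $r\ge 0$ the endofunctor $\eta_{r*}\eta_r^{-1}$ of sheaves of $R$-modules on $\mc{T}$ is lax symmetric monoidal, so we obtain a $\Sigma_p$-equivariant map of cosimplicial complexes of sheaves $\check{\mc{C}}_0(U,A)^{\otimes p}\to \check{\mc{C}}_0(U,A^{\otimes p})$; postcomposing with the map $\check{\mc{C}}_0(U,A^{\otimes p})\to\check{\mc{C}}_0(U,A)$ induced by the $p$-fold multiplication $A^{\otimes p}\to A$, applying $\on{Tot}K(-)$, and recalling $\check{\mc{C}}(U,F)=K(\check{\mc{C}}_0(U,F))$, we get the sought-for $e^{-1}\mc{Z}(p)\otimes \on{Tot}\check{\mc{C}}(U,A)^{\otimes p}\to \on{Tot}\check{\mc{C}}(U,A)$. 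It is a chain map since each of the three factors is, and it is $\Sigma_p$-equivariant since (\ref{alex-whitney}) is, $\eta_{r*}\eta_r^{-1}$ is symmetric monoidal, and $A$ is commutative so that $A^{\otimes p}\to A$ is $\Sigma_p$-invariant.

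It remains to identify the global sections of this map with (\ref{zp}). Here one uses that $\Gamma(\mc{T},-)$ is lax symmetric monoidal, with comparison maps $\Gamma(\mc{T},F)\otimes\Gamma(\mc{T},F')\to\Gamma(\mc{T},F\otimes F')$ and unit $R\to\Gamma(\mc{T},e^{-1}R)$ (hence $\mc{Z}(p)\to\Gamma(\mc{T},e^{-1}\mc{Z}(p))$, also using $\Gamma(\mc{T},e^{-1}\Lambda)=\Lambda$ from \Cref{totalization}), together with three compatibilities. First, applying $\Gamma(\mc{T},-)$ to the Alexander--Whitney map (\ref{alex-whitney}) and precomposing with the lax monoidal comparisons gives back (\ref{alex-whitney-set}) for $C=\check{C}_0(U,A)$: this is immediate because the sheaf map is \emph{defined} by the same operadic recipe (the $j$-fold tensor, then the graded switch, then composition) under the identifications of \Cref{totalization}, and $\Gamma(\mc{T},\eta_{r*}\eta_r^{-1}(A^s))=\Gamma(U_r,A^s)=\check{C}_0(U,A)^{r,s}$. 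Second, $\Gamma(\mc{T},-)$ of $\check{\mc{C}}_0(U,A)^{\otimes p}\to\check{\mc{C}}_0(U,A^{\otimes p})$, combined with the monoidal comparison, is precisely the natural map $\check{C}_0(U,A)^{\otimes p}\to\check{C}_0(U,A^{\otimes p})$ appearing in the definition of (\ref{zp}). Third, $\Gamma(\mc{T},-)$ of $\check{\mc{C}}_0(U,A^{\otimes p})\to\check{\mc{C}}_0(U,A)$ is the map induced there by the multiplication $A^{\otimes p}\to A$. Chaining these three identifications shows that the composite displayed in the statement equals (\ref{zp}).

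The individual steps are all routine; the only real work is the diagram chasing needed to line up the various lax monoidal comparison maps --- those of $\Gamma(\mc{T},-)$ and of $\eta_{r*}\eta_r^{-1}$ with $\otimes$, together with the unit $R\to\Gamma(\mc{T},e^{-1}R)$ --- into one commuting diagram, so that (\ref{zp}) is literally the global sections of the sheaf-level map (precomposed with the canonical maps that appear in the statement) rather than merely homotopic to it. This is precisely why the explicit descriptions of (\ref{alex-whitney}) and (\ref{zp}) via \Cref{totalization} are needed.
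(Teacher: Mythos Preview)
Your proposal is correct and follows essentially the same approach as the paper: both construct the sheaf-level map as the composite of the sheaf Alexander--Whitney map (\ref{alex-whitney}), the lax monoidal comparison $\check{\mc{C}}_0(U,A)^{\otimes p}\to \check{\mc{C}}_0(U,A^{\otimes p})$, and the map induced by multiplication $A^{\otimes p}\to A$, and then observe that global sections recover (\ref{zp}). Your write-up is somewhat more explicit about the lax monoidality of $\eta_{r*}\eta_r^{-1}$ and of $\Gamma(\mc{T},-)$, whereas the paper instead invokes the identity $e^{-1}\underline{\on{Hom}}_{\Delta}(M,N)\cong\underline{\mc{H}om}_{\Delta}(e^{-1}M,e^{-1}N)$ (from $e^{-1}$ commuting with finite limits) and refers to \cite[Proposition 4.4]{may2003operads} for the second step, but the content is the same.
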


\begin{proof}
	Since $e^{-1}$ and equalizers are limits, they commute with each other, and so we have \[e^{-1}\underline{\on{Hom}}_{\Delta}(M,N)\cong \underline{\mc{H}om}_{\Delta}(e^{-1}M,e^{-1}N)\] for any two cosimplicial cochain complexes $M$ and $N$. Recall also that $e^{-1}$ commutes with tensor products. We define the following composition:
	\begin{align*}
	\underline{\mc{H}om}_{\Delta}(e^{-1}\Lambda,e^{-1}\Lambda^{\otimes p})\otimes \underline{\mc{H}om}_{\Delta}(e^{-1}\Lambda,\check{\mc{C}}_0(U,A))^{\otimes p}&\\ \to \underline{\mc{H}om}_{\Delta}(e^{-1}\Lambda, \check{\mc{C}}_0&(U,A)^{\otimes p}) \\ \to \underline{\mc{H}om}_{\Delta}&(e^{-1}\Lambda, \check{\mc{C}}_0(U,A^{\otimes p})) \\ &\to \underline{\mc{H}om}_{\Delta}(e^{-1}\Lambda, \check{\mc{C}}_0(U,A)).
	\end{align*}
	Here the first homomorphism is the Alexander-Whitney map of \cite[Construction 2.6]{may2003operads}, this time with the category of cochain complexes of $R$-modules on $\mc{T}$ as target category. The proof of \cite[Proposition 4.4]{may2003operads} immediately adapts to give a map $\check{\mc{C}}_0(U,A)^{\otimes p}\to \check{\mc{C}}_0(U,A^{\otimes p})$, which in turn induces the second homomorphism in the composition above. The third homomorphism is induced by the multiplication $A^{\otimes p}\to A$. It is clear that passing to global sections yields the composition defining (\ref{zp}).
\end{proof}

Pre-composing with the pullback of (\ref{w-e-z}), we obtain a $\pi$-equivariant map 
	\begin{equation}\label{sheafy-theta}
	e^{-1}W(p,R)\otimes \on{Tot}\check{\mc{C}}(U,A)^{\otimes p}\to \on{Tot}\check{\mc{C}}(U,A).
	\end{equation}
	Taking global sections, and pre-composing with the adjunction unit and the natural map $\on{Tot}\check{C}(U,A)^{\otimes p}\to \Gamma(\mc{T},\on{Tot}\check{\mc{C}}(U,A))^{\otimes p}$, we finally get
	\begin{equation}\label{check-global}\check{\theta}:W(p,R)\otimes \on{Tot}\check{C}(U,A)^{\otimes p}\to \on{Tot}\check{C}(U,A).\end{equation} The map $\check{\theta}$ is uniquely defined up to $\pi$-homotopy equivalence. It is compatible with base change along homomorphisms of rings $R\to S$ in an obvious way.

\begin{lemma}
	Let $A$ be an $R$-algebra on $\mc{T}$. Then the pair $(\on{Tot}\check{C}(U,A),\check{\theta})$ construced above is an object of $\mc{C}(p,R)$. 
\end{lemma}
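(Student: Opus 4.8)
The plan is to check, one at a time, the conditions from \Cref{cp}: that $K:=\on{Tot}\check{C}(U,A)$ is a homotopy associative differential graded $R$-algebra with $K^i=0$ for $i<0$; that $\pi$ acts trivially on $K$ and by cyclic permutation of the factors on $K^{\otimes p}$, with $\check\theta\colon W(p,R)\otimes K^{\otimes p}\to K$ an $R[\pi]$-linear map of complexes; and finally conditions (i) and (ii). Almost all of this is forced by the construction of $\check\theta$; the one ingredient that does genuine work is the elementary observation that two degree-$0$ cocycles of $\mc{Z}(j)$ which are cohomologous (equivalently, represent the same class of $H^0(\mc{Z}(j))\cong R$, using that $\mc{Z}(j)$ is quasi-isomorphic to $R$ by \Cref{norm-unnorm-qi} and Mandell's theorem) act on $K$ by chain-homotopic operators: if the difference of the two cocycles is a coboundary $d(h)$, the two actions differ by $d(h\cdot-)\mp h\cdot d(-)$. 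This is used both for homotopy associativity and for condition (i).

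For the differential graded structure, $K^i=0$ for $i<0$ because $\check{C}(U,A)^{r,s}=\Gamma(U_r,A^s)$ vanishes unless $r,s\geq 0$, the condition on $s$ holding since $A$ is a commutative differential graded algebra. The product on $K$ is the action of the degree-$0$ cocycle $w_2\in\mc{Z}(2)^0$ obtained from the binary Alexander--Whitney coproduct $\Lambda\to\Lambda^{\otimes 2}$ followed by $m\colon A\otimes A\to A$ — concretely, the $j=2$ instance of the construction of (\ref{zp}). Being the action of a cocycle, this product is a chain map, so the Leibniz rule holds; and $M\circ(M\otimes\on{id})$ and $M\circ(\on{id}\otimes M)$ are the actions of two degree-$0$ cocycles of $\mc{Z}(3)$ in the same class of $H^0(\mc{Z}(3))\cong R$, hence chain-homotopic by the observation above (here we use the quasi-isomorphism $\alpha(3)\colon\mc{E}(3)\to\mc{Z}(3)$). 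That $\pi$ acts trivially on $K$ and by cyclic permutation on $K^{\otimes p}$ (via the switch isomorphisms of the Notation section) is immediate, and $\check\theta$ is an $R[\pi]$-linear chain map because (\ref{zp}) is $\Sigma_p$-equivariant and a chain map while the composite $W(p,R)\to\mc{E}(p)\xrightarrow{\alpha(p)}\mc{Z}(p)$ of (\ref{w-e-z}) is $\pi$-equivariant and a chain map.

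Condition (ii) holds essentially by definition: take $V:=\mc{E}(p)$, which is an $R[\Sigma_p]$-free resolution of $R$ concentrated in non-positive degrees (so $V^i=0$ for $i>0$), take $j\colon W(p,R)\to\mc{E}(p)$ to be the first map of (\ref{w-e-z}), and take $\phi\colon\mc{E}(p)\otimes K^{\otimes p}\to K$ to be $\alpha(p)\otimes\on{id}$ followed by (\ref{zp}), which is $R[\Sigma_p]$-equivariant because $\alpha(p)$ and (\ref{zp}) are. By the definition of $\check\theta$ in (\ref{sheafy-theta})--(\ref{check-global}) as (\ref{zp}) precomposed with the pullback of $W(p,R)\to\mc{E}(p)\xrightarrow{\alpha(p)}\mc{Z}(p)$, we have $\check\theta=\phi\circ(j\otimes\on{id})$ on the nose; since the validity of (ii) does not depend on the choice of $V$ and $j$, this settles it. For condition (i): under the augmentation-preserving quasi-isomorphism $W(p,R)\to\mc{E}(p)\to\mc{Z}(p)$ the generator $e_0\in W(p,R)^0$ maps to a degree-$0$ cocycle $w_p\in\mc{Z}(p)^0$ representing the canonical generator of $H^0(\mc{Z}(p))\cong R$. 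The iterated Alexander--Whitney coproduct $\Lambda\to\Lambda^{\otimes p}$, viewed as a degree-$0$ cocycle $\delta_p$ of $\mc{Z}(p)=\on{Hom}_{\Delta}(\Lambda,\Lambda^{\otimes p})$, represents the same class, and unwinding (\ref{alex-whitney}) through \Cref{totalization} shows that the action of $\delta_p$ through (\ref{zp}) is the $p$-fold iterated product on $K$. Since the restriction of $\check\theta$ to $Re_0\otimes K^{\otimes p}$ is the action of $w_p$ through (\ref{zp}), the observation of the first paragraph shows it is $R$-chain-homotopic to the $p$-fold iterated product, which is (i).

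I expect the main obstacle to be purely organizational, and to live in condition (i): one must trace the Alexander--Whitney map of (\ref{alex-whitney}) through the identification of $\on{Tot}K(C)$ with $\underline{\mc{H}om}_{\Delta}(e^{-1}\Lambda,C)$ in \Cref{totalization}, through the passage to global sections, and through the tensor--hom and switch adjunctions used to build $\check\theta$, in order to recognize the $\delta_p$-action as the iterated product. This is routine given the constructions already in place, and runs parallel to the corresponding verifications of Drury \cite[Lemmas 5.3.1 and 5.3.2]{drury2019steenrod} and of May \cite{may2003operads}.
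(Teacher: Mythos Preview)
Your proof is correct and for condition (ii) coincides with the paper's: take $V=\mc{E}(p)$, $j$ the first map of (\ref{w-e-z}), and $\phi$ the composite of $\alpha(p)\otimes\on{id}$ with (\ref{zp}), so that $\check\theta=\phi\circ(j\otimes\on{id})$ on the nose. For condition (i) the approaches differ. The paper computes $\mc{Z}(p)$ explicitly in cosimplicial level $0$ via \Cref{totalization}, observes that the Alexander--Whitney map is the identity there, and reads off that the restriction of (\ref{zp}) to $Re_0\otimes K^{\otimes p}$ is the $p$-fold multiplication coming from $A^{\otimes p}\to A$, then precomposes with (\ref{w-e-z}). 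Your route is more homotopical: you identify the image $w_p$ of $e_0$ and the iterated Alexander--Whitney diagonal $\delta_p$ as two degree-$0$ cocycles of $\mc{Z}(p)$ in the same class of $H^0(\mc{Z}(p))\cong R$, and invoke your general observation that cohomologous cocycles act by chain-homotopic operators. This avoids the explicit level-$0$ computation, and as a bonus the same mechanism with $\mc{Z}(3)$ handles homotopy associativity of $K$, which the paper leaves implicit. The trade-off is that your argument relies on knowing $H^0(\mc{Z}(j))\cong R$ (via \Cref{norm-unnorm-qi} and Mandell's result) and on the bookkeeping you flag in your last paragraph, while the paper's argument is self-contained once \Cref{totalization} is in hand.
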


\begin{proof}
	We have to check properties (i) and (ii) of \Cref{cp}. 
	
	(i) By \Cref{totalization}, we see that $\mc{Z}(p)^{0}$ is the cochain complex
	\[R\xrightarrow{0} R\xrightarrow{\on{id}} R\xrightarrow{0}R\to\cdots,\]
	where the first copy of $R$ is in degree $0$. The Alexander-Whitney map is the identity in the $0$-th row. It is now easy to check that (\ref{zp}) is the map \[\Gamma(\mc{T},\eta_{0*}\eta_0^{-1}A)^{\otimes p}\to \Gamma(\mc{T},\eta_{0*}\eta_0^{-1}A)\] obtained from the multiplication $A^{\otimes p}\to A$ by applying the adjunction unit and then taking global sections. Composing with $\alpha(p)$ and $W(p,R)\to \mc{E}(p)$, we deduce (i).
	
	(ii) We may take $V=\mc{E}(p)$ and $j$ equal to the map $W(p,R)\to \mc{E}(p)$ appearing in (\ref{w-e-z}). 
\end{proof}

\begin{lemma}
	We have a $\pi$-homotopy commutative square
\[
\begin{tikzcd}
\check{\mc{C}}(U,A)^{\otimes p} \arrow[r] & \underline{\mc{H}om}(e^{-1}W(p,R),\check{\mc{C}}(U,A))\\
A^{\otimes p} \arrow[r,"m_p"] \arrow[u,"\iota_A^{\otimes p}"]  & A \arrow[u]
\end{tikzcd}
\]
where the top horizontal map is adjoint to (\ref{sheafy-theta}). 
\end{lemma}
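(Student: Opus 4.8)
The plan is to view the square as the \v{C}ech counterpart of Drury's diagram $(\ref{key-diagram})$ and to reduce it, via the tensor--hom adjunction, to the assertion that $\iota_A$ intertwines the strict multiplication of the commutative differential graded algebra $A$ with the Alexander--Whitney action on the \v{C}ech cochains, up to $\pi$-homotopy. First I would pass to adjoints: since $\underline{\mc{H}om}(e^{-1}R,-)=\on{id}$, the displayed square is $\pi$-homotopy commutative if and only if the two $\pi$-equivariant maps $e^{-1}W(p,R)\otimes A^{\otimes p}\to\check{\mc{C}}(U,A)$ given by $(\ref{sheafy-theta})\circ(\on{id}\otimes\iota_A^{\otimes p})$ and by $\iota_A\circ(\epsilon\otimes m_p)$ are $\pi$-homotopic; here $\pi$ acts diagonally on the source, via its action on $W(p,R)$ and the cyclic action on $A^{\otimes p}$, and trivially on the target.

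Next I would unwind $(\ref{sheafy-theta})$ as the pullback of $(\ref{w-e-z})$, i.e. $e^{-1}W(p,R)\to e^{-1}\mc{E}(p)\to e^{-1}\mc{Z}(p)$, followed by the Alexander--Whitney action $e^{-1}\mc{Z}(p)\otimes\check{\mc{C}}(U,A)^{\otimes p}\to\check{\mc{C}}(U,A^{\otimes p})$ and the map induced by $m_p\colon A^{\otimes p}\to A$. Viewing $A$ as a constant cosimplicial complex --- whose normalized totalization is $A$ itself --- the unit maps $A\to\eta_{r*}\eta_r^{-1}A$ defining $\iota_A$ form a map of cosimplicial complexes, so by naturality of the Alexander--Whitney action $(\ref{alex-whitney})$ in the cosimplicial variable, together with the compatibility of $(\ref{tot-is-lax})$ --- hence of $\iota$ --- with the multiplications, the first composite factors as the Alexander--Whitney action of $e^{-1}\mc{Z}(p)$ on the constant cosimplicial complex $A$, followed by $m_p$ and the natural comparison map to $\check{\mc{C}}(U,A)$. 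We are thus reduced to an essentially formal point: the action of $e^{-1}\mc{Z}(p)$ (pulled back along $(\ref{w-e-z})$) on the constant cosimplicial complex $A$ is $\pi$-homotopic to the action through the augmentation $e^{-1}\mc{Z}(p)\to e^{-1}R$. This should follow from the Dold--Kan splitting $\Lambda=\Lambda_N\oplus\Lambda'$ with $\Lambda'$ naturally contractible (already exploited in \Cref{norm-unnorm-qi}), combined with the observation --- made in the proof of the preceding lemma --- that the Alexander--Whitney map is the identity on the zeroth row of $\mc{Z}(p)$ and that $\mc{Z}(p)^{0}$ is the complex $R\xrightarrow{0}R\xrightarrow{\on{id}}R\to\cdots$: on the generator $e_0$ one obtains $\iota_A\circ(\epsilon\otimes m_p)$ on the nose, and the contribution of $W(p,R)$ in negative degrees is killed by a $\pi$-equivariant homotopy, because on a constant cosimplicial complex the unnormalized totalization deformation retracts, naturally, onto the normalized one.

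The hard part will be exactly this last step: turning ``the operad acts trivially on a strictly commutative algebra'' into an explicit $\pi$-equivariant chain homotopy. Because the \v{C}ech cochain sheaves $\eta_{r*}\eta_r^{-1}A^s$ are not injective, one cannot simply invoke \cite[Tag 013S]{stacks-project} as in the derived setting that produces $(\ref{key-diagram})$; instead the homotopy must be assembled from the natural contracting homotopy of the degenerate subcomplex $\Lambda'$, made $\Sigma_p$- and hence $\pi$-equivariant using the naturality of the Dold--Kan homotopies (as in the proof of \Cref{norm-unnorm-qi}) and the $R[\Sigma_p]$-freeness of $\mc{E}(p)$, with the usual care about signs. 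Feeding this homotopy back through the adjunctions of the first paragraph then yields the $\pi$-homotopy commutativity of the displayed square.
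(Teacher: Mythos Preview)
Your overall strategy---pass to adjoints, then use naturality of the Alexander--Whitney map to reduce to computing the $e^{-1}\mc{Z}(p)$-action on $A$ viewed as a degenerate cosimplicial complex---is exactly the paper's. The difference is in the final step, and it makes your ``hard part'' disappear.

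The paper does not view $A$ as the \emph{constant} cosimplicial complex (with $A$ in every level) but as the cosimplicial complex that is $A$ in level~$0$ and \emph{zero} in all positive levels. For this object, $\on{Tot}K(A)=A$ on the nose, and a direct check shows that the Alexander--Whitney map $e^{-1}\mc{Z}(p)\otimes A^{\otimes p}\to A^{\otimes p}$ is \emph{equal} to $e^{-1}(\epsilon)\otimes\on{id}$, not merely homotopic to it. Combining this with the functoriality of the Alexander--Whitney construction in the cosimplicial variable, one obtains a strictly $\pi$-commutative diagram with $e^{-1}\mc{Z}(p)$ on the left; the desired square is then obtained by simply precomposing with the $\pi$-equivariant map $e^{-1}W(p,R)\to e^{-1}\mc{Z}(p)$ of~(\ref{w-e-z}) and adjointing back. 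No chain homotopy needs to be built, and in particular the Dold--Kan contracting homotopies and the $R[\Sigma_p]$-freeness of $\mc{E}(p)$ are not used.

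What your approach buys is a genuine map of cosimplicial objects $A_{\text{const}}\to\check{\mc{C}}_0(U,A)$, so that naturality of Alexander--Whitney applies cleanly; the price is that $\on{Tot}K(A_{\text{const}})\neq A$, and you then have to undo this with a homotopy. The paper instead accepts that there is no cosimplicial map from the level-$0$ object to $\check{\mc{C}}_0(U,A)$, but exploits that $\iota_A$ lands in the level-$0$ summand of the totalization, where the operad action collapses to the augmentation exactly (this is the content of the explicit vertex computation). Both routes work; the paper's is shorter.
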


\begin{proof}
	If we regard $A$ as a cosimplicial cochain complex which is zero in positive levels (so all maps in the cosimplicial direction are zero), it is easy to check that the Alexander-Whitney map (\ref{alex-whitney}) for $F=A$ is just $e^{-1}(\epsilon)\otimes\on{id}$, where $\epsilon:\mc{Z}(p)\to R$ is the augmentation map.
	
	By the functoriality and equivariance of the Alexander-Whitney map, we then have a $\pi$-equivariant commutative diagram
	\[
	\begin{tikzcd}
	e^{-1}\mc{Z}(p)\otimes \on{Tot}\check{\mc{C}}(U,A)^{\otimes p} \arrow[r] & \on{Tot}\check{\mc{C}}(U,A)^{\otimes p} \arrow[r] &  \on{Tot}\check{\mc{C}}(U,A)\\
	e^{-1}\mc{Z}(p)\otimes A^{\otimes p} \arrow[r,"e^{-1}(\epsilon)\otimes\on{id}"] \arrow[u,"\on{id}\otimes \iota_A^{\otimes p}"] & A^{\otimes p} \arrow[u,"\iota_A^{\otimes p}"]  \arrow[r,"m_p"]   & A. \arrow[u,"\iota_A"]
	\end{tikzcd}
	\]

	By the tensor-hom adjunction, we obtain a $\pi$-equivariant commutative diagram
	\[
	\begin{tikzcd}
	\on{Tot}\check{\mc{C}}(U,A)^{\otimes p} \arrow[rr] && \underline{\mc{H}om}(e^{-1}\mc{Z}(p),\on{Tot}\check{\mc{C}}(U,A))\\
	A^{\otimes p} \arrow[r,"m_p"] \arrow[u,"\iota_A"] & A \arrow[r] & \underline{\mc{H}om}(e^{-1}\mc{Z}(p),A) \arrow[u,"\iota_A^*"]  
	\end{tikzcd}
	\]
	We obtain the conclusion by composing with the map \[\underline{\mc{H}om}(e^{-1}\mc{Z}(p),\on{Tot}\check{\mc{C}}(U,A))\to \underline{\mc{H}om}(e^{-1}W(p,R),\on{Tot}\check{\mc{C}}(U,A))\] induced by (\ref{w-e-z}).
\end{proof}

\subsection{Relation with derived functor cohomology}
Let $\nu:A\to I$ be a standard injective resolution by $R$-modules. We define a homomorphism 
\begin{equation}\label{cech-map}H^*(\on{Tot}\check{C}(U,A))\to \mathbb{H}^*(\mc{T},A).\end{equation}
as follows. The resolution $\nu$ induces a homomorphism $\nu^*:\on{Tot}\check{C}(U,A)\to \on{Tot}\check{C}(U,I)$. The map $\iota_I$ induces a homomorphism $\Gamma(\iota_I):\Gamma(\mc{T},I)\to \on{Tot}\check{C}(U,I)$.

\begin{lemma}\label{inj-qiso}
	The map $\Gamma(\iota_I)$ is a quasi-isomorphism.
\end{lemma}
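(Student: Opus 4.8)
The plan is to reduce the statement to a formal fact about complexes of injectives. Recall from above that, for any cochain complex $F$ of $R$-modules on $\mc{T}$, the map $\iota_F\colon F\to\on{Tot}\check{\mc{C}}(U,F)$ is a quasi-isomorphism of complexes of sheaves; we apply this with $F=I$, so that $\iota_I\colon I\to\on{Tot}\check{\mc{C}}(U,I)$ is a quasi-isomorphism. I claim that, moreover, both $I$ and $\on{Tot}\check{\mc{C}}(U,I)$ are bounded-below complexes consisting of \emph{injective} sheaves of $R$-modules on $\mc{T}$. Granting this, $\iota_I$ is a quasi-isomorphism between bounded-below complexes of injectives, hence a cochain homotopy equivalence (its mapping cone is a bounded-below acyclic complex of injectives, hence contractible); since $\Gamma(\mc{T},-)$ is additive, $\Gamma(\iota_I)$ is then a cochain homotopy equivalence as well, and in particular a quasi-isomorphism.

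It remains to prove the claim. For $I$ itself there is nothing to do: it is a standard resolution, so $I^s=0$ for $s<0$ and each $I^s$ is injective. For the total complex, observe that $\check{\mc{C}}(U,I)^{r,s}=\eta_{r*}(\eta_r^{-1}I^s)$ vanishes unless $r,s\geq 0$, so $\on{Tot}\check{\mc{C}}(U,I)$ is concentrated in non-negative degrees and its term in degree $n$ is the \emph{finite} direct sum $\bigoplus_{r+s=n}\eta_{r*}(\eta_r^{-1}I^s)$. Since a finite direct sum of injectives is injective, it suffices to show that each $\eta_{r*}(\eta_r^{-1}I^s)$ is injective. We use that $\mc{T}/U_r\to\mc{T}$ is a \emph{localization} morphism of topoi: the pullback $\eta_r^{-1}$ (restriction along $U_r\to e$) admits an exact left adjoint $\eta_{r!}$; see \cite[IV]{sga4I}. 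A functor admitting an exact left adjoint preserves injective objects, so $\eta_r^{-1}I^s$ is an injective sheaf on $\mc{T}/U_r$. As $\eta_r^{-1}$ is itself exact, its right adjoint $\eta_{r*}$ also preserves injectives, whence $\eta_{r*}(\eta_r^{-1}I^s)$ is injective on $\mc{T}$, as required.

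The one point that requires care is the one just invoked, namely that pullback along the \emph{localization} morphism $\mc{T}/U_r\to\mc{T}$ preserves injective objects: this fails for pullback along a general morphism of topoi, and genuinely depends on the existence of the exact left adjoint $\eta_{r!}$ special to localizations. Everything else is formal — boundedness below of the two complexes, finiteness of the direct sums in the totalization, and two applications of the principle that a functor with an exact left adjoint preserves injectives.
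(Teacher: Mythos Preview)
Your proof is correct, but it proceeds along a different line from the paper's own argument. The paper does not establish that the terms of $\on{Tot}\check{\mc{C}}(U,I)$ are injective; instead it invokes \cite[Tag 03AW]{stacks-project} to obtain $\check{H}^i(U,I^q)=0$ for all $i>0$ and $q\geq 0$, and then applies the spectral sequence of a double complex \cite[Tag 0133]{stacks-project} to $\check{C}(U,I)$ to conclude. Your argument, by contrast, works at the level of complexes of sheaves: you show that $\iota_I$ is a quasi-isomorphism between bounded-below complexes of injectives (using that $\eta_r^{-1}$ preserves injectives because the localization morphism has an exact left adjoint $\eta_{r!}$, and that $\eta_{r*}$ preserves injectives because $\eta_r^{-1}$ is exact), hence a chain homotopy equivalence, which survives the additive functor $\Gamma(\mc{T},-)$. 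One benefit of your route is that it delivers the stronger conclusion that $\Gamma(\iota_I)$ is a homotopy equivalence rather than merely a quasi-isomorphism; this is essentially what is needed in the subsequent \Cref{comes-from-sheaf}, where a map $\rho$ with $\rho\circ\iota_I$ homotopic to the identity is constructed by hand. The paper's approach is shorter to state, leaning on the standard vanishing of \v{C}ech cohomology of injectives as a black box, but it only produces a quasi-isomorphism directly.
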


\begin{proof}
	By \cite[Tag 03AW]{stacks-project}, we have $\check{H}^i(U,I^q)=0$ for all $i>0$ and $q\geq 0$. Here $\check{H}^i(U,-)$ denotes \v{C}ech cohomology with respect to $U$. The conclusion now follows from the application of \cite[Tag 0133]{stacks-project} to the double complex $\check{C}(U,I)$.
\end{proof}

Thus we have a morphism $\Gamma(\iota_I)^{-1}\circ \nu^*$ in the derived category, and we define (\ref{cech-map}) as the induced homomorphism in cohomology.  

The next lemma shows that (\ref{cech-map}) comes from a homomorphism of sheaves.
\begin{lemma}\label{comes-from-sheaf}
	There is a homomorphism $h:\on{Tot}\check{\mc{C}}(U,A)\to I$ such that $h\circ\iota_A=\nu$ and the map induced in cohomology by $\Gamma(\mc{T},h)$ is (\ref{cech-map}).
\end{lemma}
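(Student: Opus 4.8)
The plan is to obtain $h$ by extending $\nu$ along the quasi-isomorphism $\iota_A$, using that the terms of $I$ are injective, and then to identify the induced map on cohomology with (\ref{cech-map}) by exhibiting a chain homotopy already at the level of sheaves. First, recall from its construction that $\iota_A\colon A\to\on{Tot}\check{\mc{C}}(U,A)$ is a monomorphism in each degree — it is assembled from the adjunction units $A^s\to\eta_{0*}\eta_0^{-1}(A^s)$ attached to the cover $U_0\to e$ — and a quasi-isomorphism, so its cokernel $Q$ is a bounded-below acyclic complex of sheaves. Since $I$ is a bounded-below complex of injectives, one can lift $\nu$ along $\iota_A$ to a chain map $h\colon\on{Tot}\check{\mc{C}}(U,A)\to I$ with $h\circ\iota_A=\nu$, exactly as $\tilde{m}$ is constructed above: apply \cite[Tag 013P]{stacks-project}, the strictness of the resulting triangle being ensured by the termwise monomorphy of $\iota_A$ (concretely, the obstruction complex $\underline{\on{Hom}}(Q,I)$ is acyclic by \cite[Tag 05TG]{stacks-project}). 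By two-out-of-three, $h$ is itself a quasi-isomorphism.

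It remains to compare $\Gamma(\mc{T},h)$ with (\ref{cech-map}). By definition (\ref{cech-map}) is induced by $\Gamma(\iota_I)^{-1}\circ\nu^*$, where $\nu^*=\Gamma(\mc{T},\on{Tot}\check{\mc{C}}(U,\nu))$ and $\Gamma(\iota_I)$ is a quasi-isomorphism by \Cref{inj-qiso}; since $\Gamma(\iota_I)\circ\Gamma(\mc{T},h)=\Gamma(\mc{T},\iota_I\circ h)$, it is enough to show that $\Gamma(\mc{T},\iota_I\circ h)$ and $\nu^*$ induce the same map on cohomology. I would deduce this from the stronger claim that $\iota_I\circ h$ and $\on{Tot}\check{\mc{C}}(U,\nu)$, viewed as maps of complexes of sheaves $\on{Tot}\check{\mc{C}}(U,A)\to\on{Tot}\check{\mc{C}}(U,I)$, are chain homotopic — for then the same holds after applying $\Gamma(\mc{T},-)$. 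These two maps agree after precomposition with $\iota_A$, since $\iota_I\circ h\circ\iota_A=\iota_I\circ\nu=\on{Tot}\check{\mc{C}}(U,\nu)\circ\iota_A$ by naturality of $\iota$; as $\iota_A$ is a quasi-isomorphism, the two maps therefore coincide in the derived category $D(\mc{T})$. Finally, $\on{Tot}\check{\mc{C}}(U,I)$ is a bounded-below complex of injective sheaves — in degree $n$ it is the finite direct sum $\bigoplus_{r+s=n}\eta_{r*}\eta_r^{-1}(I^s)$, and $\eta_r^{-1}$ preserves injectives (its left adjoint $\eta_{r!}$ is exact) while $\eta_{r*}$ does as well — so by \cite[Tag 05TG]{stacks-project} two maps into it that agree in $D(\mc{T})$ are chain homotopic, which establishes the claim.

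I do not expect a genuine obstacle. The two points requiring care are upgrading the lift of $\nu$ from homotopy-commutative to strictly commutative, which I would handle via the acyclic cokernel $Q$ of $\iota_A$, and verifying that $\on{Tot}\check{\mc{C}}(U,I)$ is again a bounded-below complex of injectives — this is exactly what licenses the use of \cite[Tag 05TG]{stacks-project} at the end.
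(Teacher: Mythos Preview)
Your argument is correct and in fact establishes the strict equality $h\circ\iota_A=\nu$ that the lemma states, whereas the paper's own proof only obtains this triangle up to homotopy. The paper proceeds differently: it first produces a one-sided inverse $\rho:\on{Tot}\check{\mc{C}}(U,I)\to I$ to $\iota_I$ (using only that $I$ has injective terms), sets $h:=\rho\circ\xi$ with $\xi=\on{Tot}\check{\mc{C}}(U,\nu)$, and then reads off the comparison with (\ref{cech-map}) directly from $\Gamma(\rho)=\Gamma(\iota_I)^{-1}$ in the derived category. Your route instead lifts $\nu$ along the termwise-injective quasi-isomorphism $\iota_A$ via \cite[Tag 013P(2)]{stacks-project}, and then for the comparison you invoke the additional fact that $\on{Tot}\check{\mc{C}}(U,I)$ itself has injective terms. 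This extra input is correct---$\eta_{r!}$ is exact for a localization morphism, so $\eta_r^{-1}$ preserves injectives, and $\eta_{r*}$ always does---but the paper's argument avoids needing it. On the other hand, your approach delivers the strict commutativity promised in the statement, and your observation that $\iota_A$ is a termwise monomorphism (because units along covers are monic on sheaves) is a point the paper does not make explicit. One small quibble: the parenthetical about $\underline{\on{Hom}}(Q,I)$ being acyclic ``by \cite[Tag 05TG]{stacks-project}'' is not quite the right citation, but it is superfluous anyway since \cite[Tag 013P]{stacks-project} already gives the strict lift directly.
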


\begin{proof}
By naturality, we have a commutative diagram of solid arrows
\begin{equation}\label{cech-inj}
\begin{tikzcd}
A\arrow[r,"\iota_A"]  \arrow[d,"\nu"] & \on{Tot}\check{\mc{C}}(U,A) \arrow[d,"\xi"] \arrow[dl,dotted,swap,"h"]  \\   
I \arrow[r,"\iota_I"] & \on{Tot}\check{\mc{C}}(U,I).
\end{tikzcd}
\end{equation}
Since $\nu$, $\iota_A$ and $\iota_I$ are quasi-isomorphisms, so is $\xi$. It follows that every solid arrow in (\ref{cech-inj}) is an isomorphism in the derived category.  Note that $\Gamma(\xi)$ is the map $\nu^*$ defined in the previous paragraph.  Since $I$ is injective in every degree, there exists a cochain map \[\rho:\on{Tot}\check{\mc{C}}(U,I)\to I\] such that $\rho\circ \iota_I$ is homotopic to the identity on $I$ (apply \cite[Tag 05TG]{stacks-project} to the map $\iota_I^{-1}$ in the derived category). Define $h:=\rho\circ\xi$. Then by \cite[Tag 05TG]{stacks-project} the top triangle of (\ref{cech-inj}) is homotopy commutative. Thus we have a diagram
\begin{equation}\label{cech-inj'}
\begin{tikzcd}
A\arrow[r,"\iota_A"]  \arrow[d] & \on{Tot}\check{\mc{C}}(U,A) \arrow[d,"\xi"] \arrow[dl,dotted,swap,"h"]  \\   
I  & \on{Tot}\check{\mc{C}}(U,I).\arrow[l,"\rho"]
\end{tikzcd}
\end{equation}
where the bottom triangle is commutative, and the top triangle is commutative in the derived category, and so is homotopy commutative by \cite[Tag 013S]{stacks-project}. 

We pass to global sections level-wise in (\ref{cech-inj'}):
\begin{equation}\label{cech-inj-glob}
\begin{tikzcd}
\Gamma(\mc{T},A)\arrow[r,"\Gamma(\iota_A)"]  \arrow[d,swap,"\Gamma(\nu)"] & \on{Tot}\check{C}(U,A) \arrow[d,"\Gamma(\xi)"] \arrow[dl,dotted,swap,"\Gamma(h)"]  \\   
\Gamma(\mc{T},I) & \on{Tot}\check{C}(U,I).\arrow[l,swap,"\Gamma(\rho)"] 
\end{tikzcd}
\end{equation}
Note that taking global sections level-wise respects homotopy equivalences, therefore the bottom triangle in (\ref{cech-inj-glob}) is commutative, and the top triangle is homotopy commutative. By \Cref{inj-qiso}, the map $\Gamma(\iota_I)$ is a quasi-isomorphism and so is an isomorphism in the derived category. Since $\Gamma(\rho)\circ \Gamma(\iota_I)$ is homotopic to the identity on $\Gamma(\mc{T},I)$, we deduce that $\Gamma(\rho)$ is a left inverse of $\Gamma(\iota_I)$ in the derived category, and since $\Gamma(\iota_I)$ is an isomorphism in the derived category, we obtain that $\Gamma(\rho)=\Gamma(\iota_I)^{-1}$ in the derived category. 
It follows that $\Gamma(h)$ coincides with $\Gamma(\iota_I)^{-1}\circ \Gamma(\xi)$ in the derived category of cochain complexes. The latter is exactly the map of \cite[Tag 08BN]{stacks-project}. Since $\Gamma(h)$ and $\Gamma(\iota_I)^{-1}\circ\Gamma(\xi)$ coincide in the derived category, they induce the same homomorphism in cohomology, as desired.
\end{proof}

Denote by $\on{PSh}(\mc{S})$ the category of presheaves on $\mc{S}$. Following \cite[Expos\'e V, 2.4.2]{sga4I}, for all $i\geq 0$ we denote by $\mc{H}^i(S,-)$ the right derived functors of the forgetful functor $\mc{T}\to \on{PSh}(\mc{S})$. If $S$ is an object of $\mc{S}$, we denote by $H^i(S,-)$ the right derived functors of $\Gamma(S,-):\mc{T}\to \mc{S}et$. For all sheaves $F$ on $\mc{S}$, we have $\mc{H}^i(S,F)=H^i(S,F)=H^i(\mc{T}/S,F)$.
	
\begin{prop}\label{cech-to-derived}	
	Assume further that $R=\F_p$ and that $H^i(U_n,A^q)=0$ for every $i>0$ and every $n,q\geq 0$. Then (\ref{cech-map}) is an isomorphism and is compatible with Steenrod operations.
\end{prop}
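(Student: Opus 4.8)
The statement asserts two things: that (\ref{cech-map}) is an isomorphism, and that it is compatible with Steenrod operations. My plan is to deduce both from the single assertion that the chain map $\Gamma(\mc{T},h)\colon \on{Tot}\check{C}(U,A)\to \Gamma(\mc{T},I)=K$ of \Cref{comes-from-sheaf} is simultaneously a quasi-isomorphism and a morphism in $\mc{C}(p)$. Indeed, $\Gamma(\mc{T},h)$ induces (\ref{cech-map}) in cohomology; a morphism in $\mc{C}(p)$ induces a map respecting Steenrod operations; and a quasi-isomorphism which is a morphism in $\mc{C}(p)$ therefore induces an isomorphism compatible with the operations.

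First I would prove that $\Gamma(\mc{T},h)$ is a quasi-isomorphism. By the proof of \Cref{comes-from-sheaf}, $\Gamma(\mc{T},h)$ agrees in the derived category with $\Gamma(\mc{T},\iota_I)^{-1}\circ \nu^*$, where $\nu^*\colon \on{Tot}\check{C}(U,A)\to \on{Tot}\check{C}(U,I)$ is induced by $\nu\colon A\to I$, and $\Gamma(\mc{T},\iota_I)$ is a quasi-isomorphism by \Cref{inj-qiso}. So it suffices to show $\nu^*$ is a quasi-isomorphism, which I would do by a row-by-row comparison of the bicomplexes $\check{C}(U,A)$ and $\check{C}(U,I)$. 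Both are concentrated in non-negative bidegrees, so it is enough that for each $r\geq 0$ the map of columns $\Gamma(U_r,A^\bullet)\to \Gamma(U_r,I^\bullet)$ is a quasi-isomorphism: the spectral sequence of the filtration by \v{C}ech degree then yields the claim for the totalizations, convergence being automatic since only finitely many bidegrees contribute in each total degree. For fixed $r$, the complex $\Gamma(U_r,I^\bullet)$ computes $R\Gamma(U_r,A)$, since each $I^s$ is injective in $\mc{T}$, hence $\Gamma(U_r,-)$-acyclic, and $A\to I$ is a bounded-below resolution; thus $H^*(\Gamma(U_r,I^\bullet))=\mathbb{H}^*(U_r,A)$. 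The hypercohomology spectral sequence $E_1^{s,t}=H^t(U_r,A^s)\Rightarrow \mathbb{H}^{s+t}(U_r,A)$ degenerates by the hypothesis $H^t(U_r,A^s)=0$ for $t>0$, which identifies $H^*(\Gamma(U_r,A^\bullet))\xrightarrow{\sim}\mathbb{H}^*(U_r,A)$, as needed. This shows that (\ref{cech-map}) is an isomorphism.

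Next I would show that $\Gamma(\mc{T},h)$ is a morphism in $\mc{C}(p)$, i.e. that the square relating $\check{\theta}$ of (\ref{check-global}) to Drury's $\theta$ is $\pi$-homotopy commutative. Using the tensor-hom adjunction, the projection formula, and the fact that $\Gamma(\mc{T},-)$ preserves $\pi$-equivariant homotopies, this reduces to showing that the square of complexes of $\F_p$-sheaves
\[
\begin{tikzcd}
\on{Tot}\check{\mc{C}}(U,A)^{\otimes p} \arrow[r] \arrow[d,"h^{\otimes p}"] & \underline{\mc{H}om}(e^{-1}W,\on{Tot}\check{\mc{C}}(U,A)) \arrow[d,"h_*"] \\
I^{\otimes p} \arrow[r,"\tilde{m}_p"] & \underline{\mc{H}om}(e^{-1}W,I)
\end{tikzcd}
\]
is $\pi$-homotopy commutative, where the top arrow is adjoint to (\ref{sheafy-theta}) and the bottom arrow is from (\ref{key-diagram}). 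Its target is a complex of $\pi$-injective $\F_p$-modules, so $\pi$-homotopy commutativity is equivalent to commutativity in the derived category of $\pi$-equivariant $\F_p$-sheaves. I would check the latter by precomposing with $\iota_A^{\otimes p}\colon A^{\otimes p}\to \on{Tot}\check{\mc{C}}(U,A)^{\otimes p}$, which is a quasi-isomorphism because we work over a field (with $\mc{T}$ having enough points, exactly as in Drury's construction): both composites $A^{\otimes p}\to \underline{\mc{H}om}(e^{-1}W,I)$ then become the $p$-fold multiplication $m_p$ followed by the canonical $\pi$-equivariant resolution map $A\to \underline{\mc{H}om}(e^{-1}W,I)$. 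For the lower path this is precisely (\ref{key-diagram}) together with $h\circ\iota_A=\nu$, and for the upper path it is the $\pi$-homotopy commutative square over $A$ already established for $\check{\theta}$ (the final lemma of the previous subsection), combined with $h\circ\iota_A=\nu$ and the naturality of restriction along the augmentation $e^{-1}W\to e^{-1}\F_p$. Since $\iota_A^{\otimes p}$ is invertible in the derived category and the target is a complex of $\pi$-injectives, the two composites out of $\on{Tot}\check{\mc{C}}(U,A)^{\otimes p}$ agree there.

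I expect the last step --- matching Drury's product structure $\theta$, built from the unique-up-to-homotopy lift $\tilde{m}_p$, with May's product structure $\check{\theta}$, built from the Alexander--Whitney map and the $\mc{Z}(p)$-action on \v{C}ech cochains --- to be the main obstacle. The point that makes it go through is that both structures are pinned down, up to $\pi$-homotopy, by their restriction along $\iota_A$ to the multiplication of $A$; everything else is bookkeeping with adjunctions and with the $\pi$-injectivity of $\underline{\mc{H}om}(e^{-1}W,I)$.
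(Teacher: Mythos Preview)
Your proof is correct, and the second half (compatibility with Steenrod operations) is essentially identical to the paper's: both arguments precompose the sheaf-level square with the quasi-isomorphism $\iota_A^{\otimes p}$, invoke the previous lemma on the \v{C}ech side and the defining square (\ref{key-diagram}) on the Drury side, and then use the $\pi$-injectivity of each term of $\underline{\mc{H}om}(e^{-1}W,I)$ to upgrade equality in the derived category to a $\pi$-homotopy.

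Where you differ is in the isomorphism half. The paper goes through more machinery: it picks a Cartan--Eilenberg resolution $A\to J$, sets $I=\on{Tot}(J)$, and derives the Cartan--Leray spectral sequence
\[E_2^{r,s}=H^r(\on{Tot}\check{C}(U,\mc{H}^s(A)))\Rightarrow \mathbb{H}^{r+s}(\mc{T},A),\]
then observes that its edge maps are (\ref{cech-map}) and that the hypothesis forces the $E_2$-page onto the bottom row. Your argument is more direct: you never introduce the Cartan--Eilenberg resolution or the Cartan--Leray sequence, and instead show $\nu^*\colon \on{Tot}\check{C}(U,A)\to \on{Tot}\check{C}(U,I)$ is a quasi-isomorphism by checking that each column $\Gamma(U_r,A^\bullet)\to \Gamma(U_r,I^\bullet)$ is one, using the hypercohomology spectral sequence of $A$ on each $U_r$ (which degenerates by hypothesis) together with the fact that localization to $\mc{T}/U_r$ preserves injectives. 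This is cleaner for the purpose at hand; the paper's route has the side benefit of recording the Cartan--Leray spectral sequence for complexes, which it remarks is not available in SGA4 in this generality.
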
	

\begin{proof}
Since we do not have a reference for it, we derive the Cartan-Leray spectral sequence for $A$; the original source \cite[Expos\'e V, Th\'eor\`eme 3.2]{sga4I} only applies to the case when $A$ is concentrated in degree $0$. Let $A\to J$ be a Cartan-Eilenberg resolution; see \cite[Tag 015H]{stacks-project} for the definition. We may then let $I=\on{Tot}J$ and $\nu:A\to I$ be the standard injective resolution induced by totalization. Applying \cite[08BI]{stacks-project} twice, we see that \[\on{Tot}(\check{C}(U,I))=\on{Tot}(\check{C}(U,J))=\on{Tot}(\check{C}(U,\on{Tot}(K))),\] where the $\on{Tot}(-)$ in the middle is the totalization of a triple complex, and by definition $K$ is the double complex with terms \[K^{r,s}:=\oplus_{a+b=r}\Gamma(U_a,J^{b,s})\] and maps induced by those of $J$ and $U$. On the other hand, by \Cref{inj-qiso} we have a quasi-isomorphism \[\Gamma(\iota_I):R\Gamma(\mc{T},A)=\Gamma(\mc{T},I)\to \on{Tot}(\check{C}(U,I)).\] Thus the spectral sequence associated to $K$ (see \cite[Tags 0130, 0132]{stacks-project}) reads:
\begin{equation}\label{cech-sseq}E_2^{r,s}:=H^r(\on{Tot}(\check{C}(U,\mc{H}^s(A))))\Rightarrow H^{r+s}(\mc{T},A).\end{equation} 
Here $\mc{H}^s(A)$ is defined as the presheaf sending $V\mapsto H^s(V,A)$. It is easy to see that the edge maps on the bottom horizontal row of (\ref{cech-sseq}) coincide with (\ref{cech-map}). Under the assumptions of the lemma, the $E_2$-page of (\ref{cech-sseq}) is concentrated in the bottom row, hence (\ref{cech-map}) is an isomorphism.

	We have the following diagram, where the two smaller triangles and rectangles are $\pi$-homotopy commutative:
	\[
	\begin{tikzcd}
	\on{Tot}\check{\mc{C}}(U,A)^{\otimes p}\arrow[rrr] \arrow[dd, "h^{\otimes p}"] &&& \underline{\mc{H}om}_{\F_p}(e^{-1}W,\on{Tot}\check{\mc{C}}(U,A)) \arrow[dd, "h^*"] \\
	& A^{\otimes p} \arrow[ul,swap,"\iota_A^{\otimes p}"] \arrow[dl] \arrow[r]  & A \arrow[ur] \arrow[dr]   & \\
	I^{\otimes p} \arrow[rrr]  &&&  \underline{\mc{H}om}_{\F_p}(e^{-1}W,I).	
	\end{tikzcd}
	\]
	Since $\iota_A$ is a quasi-isomorphism of complexes over a field, $\iota_A^{\otimes p}$ is also a quasi-isomorphism. A diagram chase now shows that the outer square is commutative in the derived category. Since every term of $\underline{\mc{H}om}(W,I)$ is $\F_p[\pi]$-injective, it follows from \cite[Tag 05TG]{stacks-project} that the outer square is $\pi$-homotopy commutative. By the tensor-hom adjunction, we obtain a $\pi$-homotopy commutative square
	\[
	\begin{tikzcd}
	e^{-1}W\otimes \on{Tot}\check{\mc{C}}(U,A)^{\otimes p} \arrow[r,"\check{\theta}"] \arrow[d, "\on{id}\otimes h^{\otimes p}"]  &  \on{Tot}\check{\mc{C}}(U,A) \arrow[d, "h"]  \\
	e^{-1}W\otimes I^{\otimes p} \arrow[r,"\theta"] & I. 
	\end{tikzcd}	
	\]
	Passing to global sections in the last square and pre-composing with the adjunction unit, we see that $\Gamma(h)$ induces an morphism \[(\on{Tot}\check{C}(U,A),\check{\theta})\to (\Gamma(\mc{T},I),\theta)\] in $\mc{C}(p)$. Since (\ref{cech-map}) is induced by $\Gamma(h)$, it is compatible with Steenrod operations, as desired.	
\end{proof}

\begin{rmk}
	(i) The morphism (\ref{cech-map}) is defined in \cite[Tag 08BN]{stacks-project}, at least when $\mc{T}$ is the ringed topos of a ringed space $(X,\mc{O}_X)$. The construction easily adapts to the case of arbitrary topoi; this is how we constructed (\ref{cech-map}). 
	
	(ii) The main ingredient in our proof of \Cref{cech-to-derived} is \Cref{comes-from-sheaf}, which shows that (\ref{cech-map}) comes from a map of sheaves. This is crucial, because in order to compare the Steenrod operations of May with those of Drury in \Cref{cech-to-derived} one cannot pass to global sections too soon. More precisely, we cannot prove directly that the outer square of global section is $\pi$-homotopy commutative, without first showing that the outer square of sheaves is $\pi$-homotopy commutative. This is because $\Gamma(\mc{T},\nu):\Gamma(\mc{T},A)\to \Gamma(\mc{T},I)$ is not necessarily a quasi-isomorphism.
\end{rmk}

	\section{De Rham cohomology of stacks and approximation arguments}\label{approx}
	Let $p$ be a prime number, let $k$ be a field of characteristic $p$, and let $X$ be a smooth algebraic stack over $k$. We denote by $\Omega_{X/k}$ the de Rham complex of $X$, viewed as a complex of big \'etale sheaves on $X$. We write $H^*_{\on{dR}}(X/k)$ for the de Rham cohomology of $X$, that is, the hypercohomology of $\Omega_{X/k}$. We consider the following property of $X$.
	
	\begin{property}\label{property}
		For every $d\geq 0$, there exist a smooth $k$-scheme of finite type $Z_d$  and a morphism $Z_d\to X$ such that the induced map $H^*_{\on{dR}}(X/k)\to H^*_{\on{dR}}(Z_d/k)$ is injective in degrees $\leq d$. 	
	\end{property}
	
	We denote by $D(k)$ the derived category of $k$-vector spaces, and for all $h\in \Z$ we let $D(k)^{\geq h}$ be the subcategory of $D(k)$ consisting of complexes with cohomology  equal to zero in all degrees $<h$. Let $f:X\to Y$ be a morphism of smooth $k$-stacks, and let $d\geq 0$ be an integer. Following \cite[Definition 5.1]{antieau2019counterexamples}, we say that $f$ is a $d$-Hodge equivalence if for all $j\geq 0$ the cone of $Rf^*:R\Gamma(Y,\Omega^j_{Y/k})\to R\Gamma(X,\Omega^j_{X/k})$ belongs to $D(k)^{\geq d-j}$. If $f$ is a $d$-Hodge equivalence, then by \cite[Remark 5.1]{antieau2019counterexamples} the induced map $f^*:H^i_{\on{dR}}(Y/k)\to H^i_{\on{dR}}(X/k)$ is an isomorphism for all $i<d$.

	\begin{lemma}\label{local-coh}
		Let $X$ be a smooth $k$-scheme of finite type, let $Z\c X$ be a closed subscheme of codimension $d+1$, and let $U:=X\setminus Z$. Then the inclusion $U\hookrightarrow X$ is a $d$-Hodge equivalence.
	\end{lemma}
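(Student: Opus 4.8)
The plan is to identify the cone of the restriction map with a shift of a local cohomology complex, and then to bound the connectivity of that complex by a depth estimate coming from Cohen--Macaulayness.

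First I would set up excision for local cohomology along $Z$. Writing $f:U\hookrightarrow X$ for the open immersion, there is a distinguished triangle
\[R\Gamma_Z(X,\Omega^j_{X/k})\to R\Gamma(X,\Omega^j_{X/k})\xrightarrow{Rf^*}R\Gamma(U,\Omega^j_{U/k})\to R\Gamma_Z(X,\Omega^j_{X/k})[1]\]
in $D(k)$, where I use that $\Omega^j_{U/k}$ is the restriction of $\Omega^j_{X/k}$ and that, $\Omega^j_{X/k}$ being quasi-coherent, these groups may be computed Zariski-locally. Thus the cone of $Rf^*$ is $R\Gamma_Z(X,\Omega^j_{X/k})[1]$, and by \cite[Definition 5.1]{antieau2019counterexamples} it suffices to show $R\Gamma_Z(X,\Omega^j_{X/k})\in D(k)^{\geq d+1-j}$ for every $j\geq 0$. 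In fact I will prove the stronger, $j$-independent statement $R\Gamma_Z(X,\Omega^j_{X/k})\in D(k)^{\geq d+1}$.

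The key input is that $X$, being smooth over $k$, is regular, hence Cohen--Macaulay, and that $\Omega^j_{X/k}$ is locally free of finite rank. For $x\in Z$ let $I_x\subseteq \mc{O}_{X,x}$ be the ideal of $Z$; since every irreducible component of $Z$ has codimension $\geq d+1$ and $X$ is catenary with equidimensional components, $\on{ht}(I_x)\geq d+1$, and Cohen--Macaulayness gives $\on{depth}_{I_x}\mc{O}_{X,x}=\on{ht}(I_x)\geq d+1$; as $(\Omega^j_{X/k})_x$ is $\mc{O}_{X,x}$-free the same depth estimate holds for it. By Grothendieck's cohomological characterization of depth, the local cohomology sheaves $\mc{H}^b_Z(\Omega^j_{X/k})$ vanish for $b<d+1$ (and vanish away from $Z$ trivially). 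The local-to-global spectral sequence $H^a(X,\mc{H}^b_Z(\Omega^j_{X/k}))\Rightarrow H^{a+b}_Z(X,\Omega^j_{X/k})$ then forces $R\Gamma_Z(X,\Omega^j_{X/k})\in D(k)^{\geq d+1}$, so its shift lies in $D(k)^{\geq d}\subseteq D(k)^{\geq d-j}$, which is exactly the condition for $f$ to be a $d$-Hodge equivalence.

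The only slightly delicate point is the translation between the geometric hypothesis on the codimension of $Z$ and the commutative-algebra depth of $\Omega^j_{X/k}$ at points of $Z$: one must check that, $X$ being catenary with equidimensional irreducible components, every component of $Z$ having codimension $\geq d+1$ forces $\on{ht}(I_x)\geq d+1$ at each $x\in Z$, and that depth with respect to an ideal is unchanged under tensoring with a free module. Both facts are standard, so no genuine obstacle is expected; the remainder is a formal chase of the distinguished triangle and the local-to-global spectral sequence, and the bound we obtain is in fact stronger than what the definition of a $d$-Hodge equivalence requires.
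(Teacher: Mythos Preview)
Your argument is correct and follows essentially the same route as the paper: both reduce to a depth estimate for the locally free sheaf $\Omega^j_{X/k}$ at points of $Z$, using that $X$ is regular (hence Cohen--Macaulay), and both obtain the $j$-independent bound that the cone lies in $D(k)^{\geq d}$. The only difference is packaging: the paper invokes \cite[Expos\'e III, Proposition 3.3]{sga2} directly to conclude that $H^i(X,\Omega^j_{X/k})\to H^i(U,\Omega^j_{X/k})$ is an isomorphism in the relevant range, whereas you unpack this by writing the distinguished triangle with $R\Gamma_Z$, appealing to Grothendieck's vanishing for the local cohomology sheaves, and running the local-to-global spectral sequence---but this is precisely the content of the SGA2 reference.
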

	
	\begin{proof}
	If $i\geq 0$ is an integer and $F$ is a sheaf on $X$, we denote by $H^i(U,F)$ the value at $F$ of the $i$-th derived functor of $\Gamma(U,-)$, viewed as a functor from abelian sheaves on $X$ to abelian groups. Using \cite[Tag 01E1]{stacks-project} and the identification $\Omega_{U/k}^j=\Omega_{X/k}^j|_U$, we see that \[H^*(U,\Omega_{X/k}^j)=H^*(U,\Omega_{X/k}^j|_U)=H^*(U,\Omega_{U/k}^j).\] Thus it suffices to show that the natural map $H^i(X,\Omega_{X/k}^j)\to H^i(U,\Omega_{X/k}^j)$ is an isomorphism for all $0\leq i\leq d$. 
		
		Since $X$ is a smooth $k$-scheme, $\Omega^j_{X/k}$ is a locally free sheaf on $X$. Recall that, if $R$ is a local noetherian ring and $M$ and $N$ are finitely generated $R$-modules, then $\on{depth}(M\oplus N)=\min\set{\on{depth}M,\on{depth}N}$. (To see this, use the Ext characterization of depth.) Thus, for all $x\in X$ we have \[\on{depth}\Omega_{X/k,x}^j=\on{depth}\mc{O}_{X,x}=\dim \mc{O}_{X,x}=\on{codim}_X\cl{\set{x}}.\] Here we are computing depths of $\mc{O}_{X,x}$-modules. The last equality is the fact that a regular local ring is Cohen-Macaulay. Therefore, for all $z\in Z$ we have \[\on{depth}\Omega_{X/k,z}^j\geq \on{codim}_XZ= d+1.\]
		By \cite[Expos\'e III, Proposition 3.3]{sga2}, this implies that $H^i(X,\Omega_{X/k}^j)\to H^i(U,\Omega_{X/k}^j)$ is an isomorphism for all $0\leq i\leq d$, as desired.
	\end{proof}

	\begin{prop}\label{approximate}
		Let $Y$ be a smooth quasi-projective $k$-scheme, let $G$ is a linear algebraic $k$-group, and let $X:=[Y/G]$. Then $X$ satisfies \Cref{property}.
	\end{prop}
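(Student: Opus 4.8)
The plan is to exhibit $X$ as the zero section of a vector bundle over itself, inside which the complement of a closed substack of high codimension is representable by a smooth scheme $Z_d$; one then plays the split injectivity coming from the zero section against a stacky version of \Cref{local-coh}. Fix a faithful finite-dimensional representation $G\hookrightarrow\on{GL}(V)$ — such a representation exists because $G$ is linear algebraic, and it is automatically a closed immersion — and put $n:=\dim_kV$. For $m\geq n$, let $W_m:=\on{Hom}_k(V,k^m)$, viewed as a $G$-representation via the action on $V$, and let $U_m\c W_m$ be the open subscheme of injective linear maps. Faithfulness of the representation shows that $G$ acts freely on $U_m$ (a point of $U_m$ with nontrivial stabilizer would force a nontrivial element of $G$ to act trivially on $V$), and the closed complement $W_m\setminus U_m$, the locus of maps of rank $\leq n-1$, has codimension $m-n+1$ in the affine space $W_m$.

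Now I would introduce the smooth $k$-stack $E_m:=[(W_m\times Y)/G]$, with $G$ acting diagonally. The $G$-equivariant projection $W_m\times Y\to Y$ realizes $W_m\times Y$ as the trivial rank $mn$ vector bundle on $Y$, with $G$ acting linearly on the fibres, and hence descends to a vector bundle $\pi\colon E_m\to[Y/G]=X$, with zero section $s\colon X\to E_m$ descended from $y\mapsto(0,y)$; in particular $\pi\circ s=\on{id}_X$. Since $G$ acts freely on $U_m$, hence on $U_m\times Y$, the open substack $Z_d:=[(U_m\times Y)/G]\c E_m$ is in fact a scheme: it is the quotient of the smooth quasi-projective $k$-scheme $U_m\times Y$ by a free action of the affine $k$-group $G$, so it is a smooth $k$-scheme of finite type (indeed quasi-projective) by the standard facts on quotients by free actions of linear algebraic groups, as in the usual approximation of classifying stacks by smooth schemes. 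The morphism we take is the composite $Z_d\hookrightarrow E_m\xrightarrow{\pi}X$, and the closed complement $E_m\setminus Z_d=[((W_m\setminus U_m)\times Y)/G]$ has codimension $m-n+1$ in $E_m$.

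With this in place, the proof concludes as follows. Functoriality of de Rham cohomology and $\pi\circ s=\on{id}_X$ give $s^*\circ\pi^*=\on{id}$ on $H^*_{\on{dR}}(X/k)$, so $\pi^*\colon H^*_{\on{dR}}(X/k)\to H^*_{\on{dR}}(E_m/k)$ is injective in every degree. On the other hand, provided $m\geq n+d+1$, the open immersion $Z_d\hookrightarrow E_m$ induces an isomorphism $H^i_{\on{dR}}(E_m/k)\xrightarrow{\sim}H^i_{\on{dR}}(Z_d/k)$ for all $i\leq d$ — this is the point where the stacky purity statement below enters. Composing, $H^i_{\on{dR}}(X/k)\to H^i_{\on{dR}}(Z_d/k)$ is injective for $i\leq d$, which is exactly the content of \Cref{property}; one takes $Z_d$ as above with $m=n+d+1$.

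It remains to extend \Cref{local-coh} to smooth algebraic stacks, in the form: if $\mc{X}$ is a smooth $k$-stack and $\mc{Z}\c\mc{X}$ is a closed substack of codimension $\geq c$ with complement $\mc{U}$, then $\mc{U}\hookrightarrow\mc{X}$ is a $(c-1)$-Hodge equivalence, so $H^i_{\on{dR}}(\mc{U}/k)\cong H^i_{\on{dR}}(\mc{X}/k)$ for $i<c-1$; the isomorphism used above is the case $c=m-n+1\geq d+2$. For each $j\geq 0$ the cone of $R\Gamma(\mc{X},\Omega^j_{\mc{X}/k})\to R\Gamma(\mc{U},\Omega^j_{\mc{U}/k})$ is $R\Gamma_{\mc{Z}}(\mc{X},\Omega^j_{\mc{X}/k})[1]$, so it suffices to see that the local cohomology complex $R\underline{\Gamma}_{\mc{Z}}(\Omega^j_{\mc{X}/k})$ is concentrated in degrees $\geq c$: then the left exact functor $R\Gamma(\mc{X},-)$ preserves this bound and the cone lies in $D(k)^{\geq c-1}\c D(k)^{\geq(c-1)-j}$. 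The concentration can be checked after restricting to the big \'etale site of a smooth atlas $P\to\mc{X}$, where it reads $R\underline{\Gamma}_{\mc{Z}\times_{\mc{X}}P}(\Omega^j_{P/k})$; since $P$ is a regular scheme, $\Omega^j_{P/k}$ is locally free, hence of depth $\geq c$ along $\mc{Z}\times_{\mc{X}}P$, and one concludes by the same appeal to \cite[Expos\'e III, Proposition 3.3]{sga2} as in the proof of \Cref{local-coh}. The main obstacle is precisely this purity statement for the Hodge cohomology of smooth stacks; verifying that $(U_m\times Y)/G$ is a smooth scheme of finite type is routine but should be done with care.
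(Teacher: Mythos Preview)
Your proposal is correct and follows the same overall architecture as the paper: form the vector bundle $E_m=[(W_m\times Y)/G]\to X$, use the zero section to get split injectivity of $\pi^*$, remove a closed substack of large codimension to obtain a smooth scheme $Z_d$, and combine. The one genuine point of departure is how you establish that $Z_d\hookrightarrow E_m$ is a Hodge equivalence. The paper never proves a stacky version of \Cref{local-coh}; instead it applies \Cref{local-coh} at the scheme level to $U\hookrightarrow V$, then invokes \cite[Proposition 5.10]{antieau2019counterexamples} twice (stability of Hodge equivalences under products with a fixed smooth scheme, and under passage to quotient stacks by $G$) to conclude that $(U\times_kY)/G\hookrightarrow [(V\times_kY)/G]$ is a $(d+1)$-Hodge equivalence. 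Your route---proving purity directly for the stack via local cohomology sheaves and checking concentration on a smooth atlas---is more self-contained but requires you to set up $R\underline{\Gamma}_{\mc{Z}}$ on the big \'etale site of a stack and verify its compatibility with restriction to an atlas, which you correctly flag as the main obstacle. The paper's approach trades that foundational work for a citation to Antieau--Bhatt--Mathew; yours would be preferable if one wanted to avoid that dependence.

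One small point: your assertion that $(U_m\times Y)/G$ is a scheme because the action is free is not quite enough in general (free actions can give algebraic spaces). The paper handles this by noting that $Y$ is quasi-projective and, in the remark following the proof, reducing to $G=\on{GL}_n$ via $[Y/G]\cong[(Y\times^G\on{GL}_n)/\on{GL}_n]$; with your concrete choice of $U_m$ one can argue similarly, but the sentence as written is a little loose.
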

	
	\begin{proof}
		Let $d\geq 0$ be an integer. There exist a representation $V$ of $G$, a closed $G$-invariant subscheme $Z\c V$ of codimension $\geq d+2$, such that the complement $U:=V\setminus Z$ is the total space of a $G$-torsor $U\to U/G$, where $U/G$ is a smooth quasi-projective $k$-scheme. By \Cref{local-coh}, the inclusion $U\hookrightarrow V$ is a $(d+1)$-Hodge equivalence. By \cite[Proposition 5.10(1)]{antieau2019counterexamples}, the inclusion $U\times_kY\hookrightarrow V\times_kY$ is also a $(d+1)$-Hodge equivalence, which is $G$-equivariant if we let $G$ act diagonally. By \cite[Proposition 5.10(2)]{antieau2019counterexamples}, the induced open embedding $(U\times_kY)/G\hookrightarrow [(V\times_kY)/G]$ is a $(d+1)$-Hodge equivalence. By \cite[Remark 5.1]{antieau2019counterexamples}, this implies that for all $i\leq d$ the natural maps
		\[H^i_{\on{dR}}([(V\times_kY)/G]/k)\to H^i_{\on{dR}}(((U\times_kY)/G)/k)\]
		are isomorphisms.
		
		Using the zero section of the vector bundle $[(V\times_kY)/G]\to [Y/G]$, we see that $H^*_{\on{dR}}(X/k)$ is a direct summand of $H^*_{\on{dR}}([(V\times_kY)/G]/k)$. We conclude that for all $i\leq d$ the pullback homomorphisms \[H^i_{\on{dR}}(X/k)\to H^i_{\on{dR}}(((U\times_kY)/G)/k)\] are injective. As $Y$ is quasi-projective, $(U\times_kY)/\on{GL}_n$ is a scheme, hence \[Z_d:=(U\times_kY)/G\to X\] satisfies \Cref{property}.
	\end{proof}

\begin{rmk}
	(i) Let $G\hookrightarrow \on{GL}_n$ be a faithful representation of $G$ over $k$, for some $n\geq 2$. Then $X=[Y/G]\cong [(Y\times^G\on{GL}_n)/\on{GL}_n]$. Here $Y\times^G\on{GL}_n$ is the fppf-quotient of $Y\times_k \on{GL}_n$ by the diagonal action of $G$. Since $Y$ is smooth and quasi-projective, the fppf-quotient is represented by a smooth quasi-projective $k$-scheme. Thus, in the course of proving \Cref{approximate}, we could have assumed that $G=\on{GL}_n$ for some $n\geq 1$. 
	
	When $G=\on{GL}_n$ for some $n\geq 1$, we can be more explicit about the representations that we use. Namely, let $r\geq 0$ be an integer, let $V:=M_{n,n+r}$, the $k$-vector space of $n\times (n+r)$-matrices, on which $\on{GL}_n$ acts by multiplication on the left, let $Z$ be the locus of matrices of rank $<n$, and $U:=V\setminus Z$. If $r$ is sufficiently large, $Z$ has codimension $\geq d+1$ in $V$. We have a $\on{GL}_n$-torsor $U\to U/\on{GL}_n$, where $U/\on{GL}_n=\on{Gr}(n,n+r)$ is a Grassmannian.
	
	(ii) In the setting of \Cref{local-coh}, assume that $k$ is of characteristic zero. Then $H^i_{\on{dR}}(X/k)\to H^i_{\on{dR}}(U/k)$ is injective for all $0\leq i\leq 2d+1$. The proof goes by induction, cutting $X$ by a smooth divisor $Y$ and using the logarithmic de Rham complex $\Omega_{X/k}(\on{log}Y)$; see \cite[Lemma p. 11]{dejong2007notes}.
	
	(iii) When $X=BG$ for a finite or reductive $k$-group $G$, \Cref{property} has been established by B. Antieau, B. Bhatt, and A. Mathew in \cite[Theorem 1.2]{antieau2019counterexamples} by a completely different argument. The $Z_d$ exhibited by them are smooth projective $k$-varieties.
\end{rmk}

	\section{Steenrod operations on de Rham cohomology}\label{derham} 
	Let $p$ be a prime number, let $k$ be a field of characteristic $p$, and let $X$ be a smooth algebraic stack of finite type over $k$. We apply the construction of \Cref{definesteenrod} to the case where $\mc{T}$ is the big \'etale topos of $X$ and $A=\Omega_{X/k}$ is the de Rham complex, viewed as a commutative differential graded $\F_p$-algebra on $\mc{T}$. We thus obtain Steenrod operations on $H^*_{\on{dR}}(X/k)$. 
	
	The purpose of this section is to show, under additional assumptions on $X$, that negative Steenrod operations on $H^*_{\on{dR}}(X/k)$ are zero, and to compute $\on{Sq}^0$ and $\on{P}^0$.
	
	\subsection{Compatibility with cohomology on the crystalline site}
	Let $X$ be a smooth scheme over $k$. Let $X_{\text{Zar}}$, $X_{\text{ZAR}}$, $X_{\text{\'et}}$, $X_{\text{\'ET}}$. denote the small and big Zariski site of $X$, and the small and big \'etale site of $X$, respectively. We may view $\Omega_{X/k}$ as a complex of sheaves on each of these four sites. We have an obvious commutative diagram of morphisms of sites:
	\[
	\begin{tikzcd}
	X_{\text{Zar}} \arrow[r] \arrow[d] & X_{\text{\'et}} \arrow[d] &\\ 
	X_{\text{ZAR}} \arrow[r] &  X_{\text{\'ET}}.
	\end{tikzcd}
	\]
	Since $X$ is a scheme, $\Omega_{X/k}$ is a complex of coherent sheaves on $X$, hence each map in the previous diagram induces an isomorphism in the hypercohomology of $\Omega_{X/k}$. Applying \Cref{functorial} to the morphisms of topoi associated to each of the arrows appearing, we see that the Steenrod operations on $H^*_{\on{dR}}(X/k)$ do not depend on the choice of site.
	
	We write $(X/k)_{\on{cris}}$ for the crystalline site of $X$ over $k$, where we regard $k$ as a divided power ring, with the unique divided power structure with respect to the ideal $(0)$. Let $(u_*,u^{-1}):(X/k)_{\on{cris}}\to X_{\on{Zar}}$ be the morphism of topoi defined in \cite[Proposition 5.18]{berthelot1978notes}.\footnote{In \cite{berthelot1978notes}, $u^{-1}$ is denoted by $u^*$.} By  \cite[5.19]{berthelot1978notes}, the unit $\on{id}\to u_*u^{-1}$ is an isomorphism of functors, and in particular we have a canonical isomorphism
	\begin{equation}\label{unit}
	\Omega_{X/k}\xrightarrow{\sim}u_*u^{-1}\Omega_{X/k}.
	\end{equation}
	
	By the Poincar\'e Lemma in crystalline cohomology \cite[Theorem 6.12]{berthelot1978notes}, there is a canonical quasi-isomorphism $\mc{O}_{X/k}\to L(\Omega_{X/k})$ of complexes of abelian sheaves on $(X/k)_{\on{cris}}$, where $\mc{O}_{X/k}$ is viewed as a complex concentrated in degree zero. Here $L$ is the linearization functor of \cite[Construction 6.9 and below]{berthelot1978notes}. Applying \cite[Proposition 6.10]{berthelot1978notes} to $S=\Spec k$ and $Y=X$, we see that there is a natural isomorphism $L(\Omega_{X/k})\xrightarrow{\sim}u^{-1}\Omega_{X/k}$. Composing these two maps, we obtain a canonical quasi-isomorphism
	\[
	\psi:\mc{O}_{X/k}\xrightarrow{\sim} u^{-1}\Omega_{X/k}
	\]
	on $(X/k)_{\on{cris}}$. Moreover, $\psi$ is a homomorphism of sheaves of differential $k$-algebras.
	
	We have $\Gamma((X/k)_{\on{cris}},-)=\Gamma(X_{\on{Zar}},-)\circ u_*$, and so \begin{equation}\label{derivedfunctors}
	R\Gamma((X/k)_{\on{cris}},-)=R\Gamma(X_{\on{Zar}},-)\circ Ru_*.
	\end{equation} 
	Since $u^{-1}\Omega_{X/k}$ is acyclic with respect to $u_*$, the natural map 
	\begin{equation}\label{Ruu}u_*u^{-1}\Omega_{X/k}\to Ru_*u^{-1}\Omega_{X/k}\end{equation} 
	is an isomorphism in the derived category; see \cite[Corollary 5.27]{berthelot1978notes} or the proof of \cite[Theorem 7.1]{berthelot1978notes}. The composition of $Ru_*(\psi)$, the inverse of (\ref{Ruu}) and the inverse of (\ref{unit}) yields a canonical isomorphism 
	\begin{equation}\label{poincare} Ru_{*}(\mc{O}_{X/k})\xrightarrow{\sim} \Omega_{X/k}\end{equation} 
	in the derived category of sheaves of abelian groups on $X_{\on{Zar}}$. It may be helpful for the reader to note that this is exactly the isomorphism of \cite[(7.1.2)]{berthelot1978notes}. 
	Using (\ref{derivedfunctors}), we obtain a canonical isomorphism
	\begin{equation}\label{poincare-global}H^*((X/k)_{\on{cris}},\mc{O}_{X/k})\xrightarrow{\sim} \mathbb{H}^*(X_{\on{Zar}},\Omega_{X/k})=H^*_{\on{dR}}(X/k).\end{equation}
	Since $\mc{O}_{X/k}$ is a sheaf of $k$-algebras on $(X/k)_{\on{cris}}$, on the left side of (\ref{poincare-global}) we also have Steenrod operations.
	
	\begin{prop}\label{cris-compatible-scheme}
		Let $X$ be a smooth scheme over $k$. For every $i\geq 0$, the isomorphism (\ref{poincare-global}) is compatible with Steenrod operations.
	\end{prop}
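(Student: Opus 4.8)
The plan is to exhibit the isomorphism (\ref{poincare-global}) as a composite of two isomorphisms, each of the form handled by \Cref{functorial}, and hence each compatible with Steenrod operations.

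\emph{First step.} Apply \Cref{functorial} to the morphism of topoi $u=(u_*,u^{-1})\colon (X/k)_{\on{cris}}\to X_{\on{Zar}}$, with $A=\Omega_{X/k}$ on $X_{\on{Zar}}$, $A'=u^{-1}\Omega_{X/k}$ on $(X/k)_{\on{cris}}$, and the tautological homomorphism $u^{-1}A\to A'$. We obtain a Steenrod-compatible homomorphism
\[\Phi_1\colon H^*_{\on{dR}}(X/k)=\mathbb{H}^*(X_{\on{Zar}},\Omega_{X/k})\to \mathbb{H}^*((X/k)_{\on{cris}},u^{-1}\Omega_{X/k}).\]
By \Cref{commutes-ruu} (applied with $f=u$), $\Phi_1$ is, up to the canonical identification $\mathbb{H}^*((X/k)_{\on{cris}},u^{-1}\Omega_{X/k})=\mathbb{H}^*(X_{\on{Zar}},Ru_*u^{-1}\Omega_{X/k})$ coming from (\ref{derivedfunctors}), the composite of the unit map $\mathbb{H}^*(X_{\on{Zar}},\Omega_{X/k})\to \mathbb{H}^*(X_{\on{Zar}},u_*u^{-1}\Omega_{X/k})$ with the map $\mathbb{H}^*(X_{\on{Zar}},u_*u^{-1}\Omega_{X/k})\to \mathbb{H}^*(X_{\on{Zar}},Ru_*u^{-1}\Omega_{X/k})$. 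Since (\ref{unit}) and (\ref{Ruu}) are isomorphisms, $\Phi_1$ is an isomorphism, and $\Phi_1^{-1}$ is precisely the composite of the inverse of (\ref{Ruu}) and the inverse of (\ref{unit}) that appears in the definition of (\ref{poincare}).

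\emph{Second step.} Apply \Cref{functorial} to the identity morphism of the topos $(X/k)_{\on{cris}}$, with $A=\mc{O}_{X/k}$, $A'=u^{-1}\Omega_{X/k}$, and the map $\psi\colon \mc{O}_{X/k}\to u^{-1}\Omega_{X/k}$ of the crystalline Poincar\'e Lemma. As noted before the statement, $\psi$ is a homomorphism of sheaves of differential $k$-algebras, hence of commutative differential graded $\F_p$-algebras since $k$ has characteristic $p$, so \Cref{functorial} yields a Steenrod-compatible homomorphism
\[\Phi_2\colon H^*((X/k)_{\on{cris}},\mc{O}_{X/k})\to \mathbb{H}^*((X/k)_{\on{cris}},u^{-1}\Omega_{X/k}).\]
Because $\psi$ is a quasi-isomorphism, $\Phi_2$ is an isomorphism; and unwinding the construction of (\ref{functorial-map}) for the identity morphism, $\Phi_2$ is just the map induced by $\psi$ on hypercohomology, which under (\ref{derivedfunctors}) is identified with $\mathbb{H}^*(X_{\on{Zar}},Ru_*(\psi))$. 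Combining the two steps, $\Phi_1^{-1}\circ\Phi_2\colon H^*((X/k)_{\on{cris}},\mc{O}_{X/k})\to H^*_{\on{dR}}(X/k)$ is a Steenrod-compatible isomorphism, and it is exactly $\mathbb{H}^*(X_{\on{Zar}},-)$ applied to the composite of $Ru_*(\psi)$, the inverse of (\ref{Ruu}) and the inverse of (\ref{unit}); that is, it is (\ref{poincare-global}).

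I expect the only real work to be the bookkeeping in the last step: verifying that $\Phi_1^{-1}\circ\Phi_2$ is literally the map (\ref{poincare-global}), and not merely some isomorphism between its source and target. This reduces to tracing the definition of the functorial map (\ref{functorial-map}) in the two cases above and comparing it with the explicit chain (\ref{poincare})--(\ref{poincare-global}); \Cref{commutes-ruu} was recorded precisely so that the first step of this comparison is immediate. No new idea beyond the naturality of Drury's construction, already established in \Cref{functorial}, is needed.
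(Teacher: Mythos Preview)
Your proposal is correct and follows essentially the same approach as the paper: decompose (\ref{poincare-global}) into two maps, apply \Cref{functorial} once along the identity of $(X/k)_{\on{cris}}$ using $\psi$ and once along $u$, and invoke \Cref{commutes-ruu} to identify the second piece with the composite of (\ref{unit}) and (\ref{Ruu}). The only difference is the order in which you present the two applications of \Cref{functorial}, and you are slightly more explicit about the final bookkeeping that $\Phi_1^{-1}\circ\Phi_2$ literally equals (\ref{poincare-global}).
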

	
	\begin{proof}
	We apply \Cref{functorial} to $\mc{T}=\mc{T}'=(X/k)_{\on{cris}}$, $(f_*,f^{-1})=(\on{id},\on{id})$, $A=f^{-1}A=\mc{O}_{X/k}$, $A'=u^{-1}\Omega_{X/k}$, and $\psi$ as the map $f^{-1}A'\to A$. We obtain that the isomorphism \[\psi^*:H^*((X/k)_{\on{cris}},\mc{O}_{X/k})\xrightarrow{\sim} \mathbb{H}^*((X/k)_{\on{cris}},u^{-1}\Omega_{X/k})\] is compatible with Steenrod operations.
	
	We now apply \Cref{functorial} to $\mc{T}'=(X/k)_{\on{cris}}$, $\mc{T}=X_{\on{Zar}}$, $(f_*,f^{-1})=(u_*,u^{-1})$, $A=\Omega_{X/k}$, $A'=u^{-1}\Omega_{X/k}$, and the identity as homomorphism $f^{-1}A\to A'$. We obtain that the induced homomorphism
	\[H^*_{\on{dR}}(X/k)\to \mathbb{H}^*((X/k)_{\on{cris}},u^{-1}\Omega_{X/k})\] is compatible with Steenrod operations. 
	
	By \Cref{commutes-ruu}, we see that this is in fact the isomorphism induced by the inverse of (\ref{Ruu}). Thus (\ref{poincare-global}) is compatible with Steenrod operations, as desired.
	\end{proof}

	\begin{rmk}\label{frobenius}
	Let $\on{Fr}: \mc{O}_{X/k}\to \mc{O}_{X/k}$ be the Frobenius endomorphism, and let $\Phi: \Omega_{X/k}\to \Omega_{X/k}$ be the endomorphism given by the Frobenius $\mc{O}_X\to \mc{O}_X$ in degree zero, and zero in all other degrees. Note that $\Phi$ is an endomorphism of complexes because $d(f^p)=pf^{p-1}d(f)=0$ for every section $f$ of $\mc{O}_X$. We have a commutative square
	\[
	\begin{tikzcd}
	\mc{O}_{X/k} \arrow[r,"\psi"] \arrow[d,"\on{Fr}"] & u^{-1}\Omega_{X/k} \arrow[d,"u^{-1}\Phi"] \\
	\mc{O}_{X/k} \arrow[r,"\psi"] & u^{-1}\Omega_{X/k}.   
	\end{tikzcd}
	\]
	It follows that we have a commutative diagram
	\[
	\begin{tikzcd}
	H^*((X/k)_{\on{cris}},\mc{O}_{X/k}) \arrow[r,"\psi^*"] \arrow[d,"\on{Fr}^*"] & \mathbb{H}^*((X/k)_{\on{cris}}, u^{-1}\Omega_{X/k}) \arrow[d,"(u^{-1}\Phi)^*"] & \arrow[l]  H^*_{\on{dR}}(X/k) \arrow[d,"\Phi^*"]  \\
	H^*((X/k)_{\on{cris}},\mc{O}_{X/k}) \arrow[r,"\psi^*"] & \mathbb{H}^*((X/k)_{\on{cris}}, u^{-1}\Omega_{X/k}) &\arrow[l]  H^*_{\on{dR}}(X/k),  
	\end{tikzcd}
	\]	
	where the horizontal arrows on the right come from \Cref{functorial} as in the proof of \Cref{cris-compatible-scheme}. As we observed in the course of proving \Cref{cris-compatible-scheme}, the horizontal maps on the right are isomorphisms, and the composition of their inverse with $\psi^*$ are (\ref{poincare-global}). It follows that (\ref{poincare-global}) transports the Frobenius endomorphism to $\Phi^*$. We will use this remark during the proof of \Cref{steenrod-0} below.
	\end{rmk}
	
	\subsection{Vanishing of negative operations}
	Let $X$ be a smooth algebraic stack over $k$. 
	
	\begin{prop}\label{cris-compatible}
		If $X$ satisfies \Cref{property}, the negative Steenrod operations on $H^*_{\on{dR}}(X/k)$ are zero.
	\end{prop}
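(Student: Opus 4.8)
The plan is to reduce the assertion to the case of smooth schemes, where it follows from the crystalline comparison of \Cref{cris-compatible-scheme} together with the vanishing result \Cref{steenrod<0}(a).

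First I would record the elementary observation that every negative Steenrod operation strictly lowers cohomological degree: if $x\in H^n_{\on{dR}}(X/k)$, then $\on{Sq}^s(x)$ (for $p=2$, $s<0$) lies in degree $n+s<n$, while $\on{P}^s(x)$ and $\beta\on{P}^s(x)$ (for $p>2$, $s<0$) lie in degrees $n+2s(p-1)$ and $n+2s(p-1)+1$, both of which are $<n$ since $2s(p-1)\le -2(p-1)<-1$. Fixing $x$ and a negative operation $\Theta$, I would apply \Cref{property} with $d:=n$ to produce a smooth $k$-scheme of finite type $Z_d$ and a morphism $f\colon Z_d\to X$ such that $f^*\colon H^i_{\on{dR}}(X/k)\to H^i_{\on{dR}}(Z_d/k)$ is injective for all $i\le d$. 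The $1$-morphism $f$ induces a morphism of big \'etale topoi together with a homomorphism $f^{-1}\Omega_{X/k}\to\Omega_{Z_d/k}$ of commutative differential graded $\F_p$-algebras, so \Cref{functorial} applies and yields $f^*(\Theta(x))=\Theta(f^*x)$.

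Next I would dispose of the scheme $Z_d$. By the site-independence discussion preceding \Cref{cris-compatible-scheme}, the Steenrod operations on $H^*_{\on{dR}}(Z_d/k)$ may be computed on the small Zariski site, and there \Cref{cris-compatible-scheme} identifies them, via the crystalline Poincar\'e Lemma isomorphism (\ref{poincare-global}), with the Steenrod operations on $H^*((Z_d/k)_{\on{cris}},\mc{O}_{Z_d/k})$. Since $\mc{O}_{Z_d/k}$ is an $\F_p$-algebra on the crystalline topos concentrated in degree $0$, \Cref{steenrod<0}(a) shows that every negative Steenrod operation vanishes on $H^*((Z_d/k)_{\on{cris}},\mc{O}_{Z_d/k})$, hence $\Theta$ vanishes on $H^*_{\on{dR}}(Z_d/k)$. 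In particular $\Theta(f^*x)=0$, so $f^*(\Theta(x))=0$; as $\Theta(x)$ lies in a degree $\le n=d$ where $f^*$ is injective, we get $\Theta(x)=0$. Since $x$ was arbitrary, this proves the proposition.

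I do not anticipate a substantive obstacle: the genuine content has already been extracted in \Cref{steenrod<0} (the comparison of Drury's construction with Epstein's, which forces the vanishing for algebras concentrated in degree $0$) and in \Cref{cris-compatible-scheme} (transporting the de Rham Steenrod operations to the crystalline site). The only points demanding care are bookkeeping: verifying that the relevant degree shifts are strictly negative so that the choice $d=n$ is adequate, and checking that pullback along the possibly non-representable morphism $Z_d\to X$ of stacks is indeed covered by the topos-theoretic functoriality of \Cref{functorial}.
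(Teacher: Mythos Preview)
Your argument is correct and follows essentially the same route as the paper: reduce to smooth schemes via \Cref{property} and the functoriality of \Cref{functorial}, then invoke \Cref{cris-compatible-scheme} and \Cref{steenrod<0}(a). Your version is more explicit about the degree bookkeeping (choosing $d=n$ and checking that negative operations land in degrees $\le d$), which the paper's terse proof leaves implicit.
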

	
	\begin{proof}
		By \Cref{functorial} and \Cref{approximate}, we may assume that $X$ is a smooth scheme of finite type over $k$.	By \Cref{cris-compatible-scheme}, it suffices to show that the negative Steenrod operations on $H^*((X/k)_{\on{cris}},\mc{O}_{X/k})$ are zero. Since $\mc{O}_{X/k}$ is concentrated in degree $0$, this follows from \Cref{steenrod<0}.
	\end{proof}

	\subsection{Determination of $\on{Sq}^0$ and $\on{P}^0$}
	
	Let $\mc{A}$ be an abelian category with sufficiently many injectives, let $\mc{B}$ be an abelian category, and let $F:\mc{A}\to \mc{B}$ be a left exact additive functor. Let $(A,d)$ be a cochain complex with $A^i\in \mc{A}$ for all $i$ and $A^i=0$ for $i<0$. We have the two hypercohomology spectral sequences
	\[{'E}_1^{rs}:= R^sF(A^r)\Rightarrow H^{r+s}(RF(A))\]
	and 
	\[{''E}_2^{rs}:= R^rF(H^s(A))\Rightarrow H^{r+s}(RF(A)).\] Let $A\to I^{**}$ be a Cartan-Eilenberg resolution of $A$; see \cite[Tag 015H]{stacks-project}. The two spectral sequences are obtained from the two spectral sequences associated to the double complex $F(I^{**})$; see \cite[Tag 015J]{stacks-project}. Associated to these spectral sequences, we have edge homomorphisms \['e^i:H^i(RF(A))\to R^iF(A^0),\qquad ''e^i: R^i(H^0(A))\to H^i(RF(A)).\]
		
	Now let $\phi\in \on{Hom}(A^0,A^0)$, and assume that $d^1\phi=0$. We also denote by $\phi$ the induced map $A^0\to \on{Ker}(d^1)=H^0(A)$. We have a cochain map $\Phi:A\to A$, given by $\Phi^0:=\phi$ and $\Phi^i:=0$ for all $i>0$. 
	
	\begin{lemma}\label{factors}
	The diagram
	\[
	\begin{tikzcd}
	H^*(RF(A))\arrow[r, "\Phi^*"] \arrow[d,"{'e}"]  & H^*(RF(A))  \\ 
	R^*F(A^0) \arrow[r, "\phi^*"]  & \arrow[u,"{''e}"] R^*F(H^0(A))
	\end{tikzcd}
	\]
	is commutative.
	\end{lemma}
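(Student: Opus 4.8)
The plan is to realise $\Phi$ as the composite of three maps of complexes — collapsing $A$ onto its bottom term, applying $\phi$, and including the bottom cohomology back into $A$ — and then to recognise the three induced maps on hypercohomology as $'e$, $\phi^*$, and $''e$ respectively.

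First I would set up the two auxiliary chain maps. Since $A^i=0$ for $i<0$, the object $A^0$, placed in degree $0$ (i.e.\ the stupid truncation $\sigma_{\leq 0}A$), receives a surjective chain map $e_A\colon A\to A^0$ which is the identity in degree $0$ and zero elsewhere. Likewise $H^0(A)=\on{Ker}(d^1)$ is a subobject of $A^0$ and, placed in degree $0$ (the canonical truncation $\tau_{\leq 0}A$), it sits inside $A$ via an inclusion chain map $i_A\colon H^0(A)\to A$. The hypothesis $d^1\phi=0$ is precisely what makes $\phi$ a well-defined map $A^0\to H^0(A)$ — hence a chain map between the associated complexes concentrated in degree $0$ — and a glance at degree $0$ shows
\[\Phi=i_A\circ\phi\circ e_A\]
as chain maps $A\to A$. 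Applying $RF$ and passing to cohomology, and using $H^i(RF(A^0))=R^iF(A^0)$ and $H^i(RF(H^0(A)))=R^iF(H^0(A))$, this factorisation becomes $\Phi^*=(i_A)_*\circ\phi_*\circ(e_A)_*$, where $\phi_*\colon R^*F(A^0)\to R^*F(H^0(A))$ is the map induced on derived functors by $\phi$ — that is, the map called $\phi^*$ in the statement.

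It then remains to identify $(e_A)_*$ with $'e$ and $(i_A)_*$ with $''e$. Here I would compute $RF$ with the very Cartan–Eilenberg resolution $A\to I^{\bullet\bullet}$ used to define the two spectral sequences, so that $RF(A)=\on{Tot}F(I^{\bullet\bullet})$. The augmentation $A^0\to I^{0,\bullet}$ is an injective resolution, so $I^{0,\bullet}$ in horizontal degree $0$ is a Cartan–Eilenberg resolution of $A^0$, and the projection of double complexes $\rho\colon I^{\bullet\bullet}\to I^{0,\bullet}$ onto the zeroth column is a lift of $e_A$; after applying $F$ it becomes the canonical surjection $\on{Tot}F(I^{\bullet\bullet})\to F(I^{0,\bullet})$, which by construction induces the edge homomorphism $'e$ of the column-filtration (Hodge) spectral sequence. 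Similarly, the Cartan–Eilenberg condition forces $L^\bullet:=\on{Ker}\bigl(I^{0,\bullet}\to I^{1,\bullet}\bigr)$ to be an injective resolution of $\on{Ker}(d^1)=H^0(A)$; in horizontal degree $0$ it is a Cartan–Eilenberg resolution of $H^0(A)$, and the inclusion $L^\bullet\hookrightarrow I^{\bullet\bullet}$ lifts $i_A$. Since $F$ is left exact, $F(L^\bullet)=\on{Ker}\bigl(F(I^{0,\bullet})\to F(I^{1,\bullet})\bigr)$, and $F$ of this inclusion is the natural chain map $F(L^\bullet)\hookrightarrow\on{Tot}F(I^{\bullet\bullet})$ inducing the edge homomorphism $''e$ of the row-filtration (conjugate) spectral sequence. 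Combining this with the factorisation $\Phi^*=(i_A)_*\circ\phi_*\circ(e_A)_*$ yields exactly the commutativity of the square.

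The only step that needs genuine care is the last identification: that $F(\rho)$ is the edge homomorphism $'e$ and that $F(L^\bullet)\hookrightarrow\on{Tot}F(I^{\bullet\bullet})$ is the edge homomorphism $''e$. This is the standard description of the edge maps of the two hypercohomology spectral sequences in terms of the two filtrations of $\on{Tot}F(I^{\bullet\bullet})$ — ``project onto the zeroth column'', respectively ``include the horizontal cocycles in the zeroth column'' — and I would either record a one-line verification of this or simply cite the relevant lemmas from the Stacks project. Everything else is a routine diagram check.
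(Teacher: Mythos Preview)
Your proposal is correct and follows essentially the same route as the paper: factor $\Phi$ as $A\to A^0[0]\xrightarrow{\phi}H^0(A)[0]\to A$, then identify the two outer induced maps with the edge homomorphisms ${'e}$ and ${''e}$. The only cosmetic difference is that you verify the edge-map identifications by working explicitly with a Cartan--Eilenberg resolution, whereas the paper does it via naturality of the edge maps in the two spectral sequences; both arguments are standard and equivalent.
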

	
	\begin{proof}
	By assumption, the map $\Phi:A\to A$ factors as \[A\to A^0[0]\xrightarrow{\phi} H^0(A)[0]\to A.\] We deduce that $\Phi^*$ factors as
	\[H^*(RA(F))\xrightarrow{\psi'} H^*(RF(A^0[0]))\xrightarrow{\phi^*}H^*(RF(H^0(A)[0]))\xrightarrow{\psi^{''}}H^*(RA(F)).\]
	Here $\psi'$ and $\psi''$ come from the functoriality of the $E_{\infty}$-pages of the first and second spectral sequence, respectively. Consider now the following commutative diagram:
		\[
	\begin{tikzcd}
	H^*(RA(F))\arrow[r,"\psi'"] \arrow[d,equal] & H^*(RF(A^0[0])) \arrow[r,"\phi^*"] \arrow[d,"\wr"]  &  H^*(RF(H^0(A)[0])) \arrow[r,"\psi''"] \arrow[d,"\wr"]  & H^*(RA(F)) \arrow[d,equal]  \\ 
	H^*(RA(F)) \arrow[r,"'e"]  & \arrow[r,"\phi^*"] R^*F(A^0) & R^*F(H^0(A)) \arrow[r,"''e"] & H^*(RF(A)). 
	\end{tikzcd}
	\]
	The commutativity of the square on the left (resp. right) follows from the naturality of the edge maps in the first (resp. second) hypercohomology spectral sequence. The square in the middle is commutative, since the $E_2$-pages of the first and second spectral sequences for the hypercohomology of a complex concentrated in degree $0$ coincide.
	\end{proof}
	
	We immediately obtain the following result.
	
	\begin{prop}\label{steenrod-0}
	Assume that $X$ satisfies \Cref{property}, and let $\on{Sq}^0$ (if $p=2$) and $\on{P}^0$ (if $p>2$) be the zeroth Steenrod operation on $H^*_{\on{dR}}(X/k)$.
	
	(a) Then $\on{P}^0$ and $\on{Sq}^0$ factor as
	\[H^*_{\on{dR}}(X/k)\to H^*(X,\mc{O}_X)\to H^*(X,\mc{O}_X)\to H^*_{\on{dR}}(X/k),\] where the first map is an edge homomorphism in the Hodge spectral sequence, the second map is induced by the Frobenius endomorphism of $\mc{O}_X$ and the third map is an edge homomorphism in the conjugate spectral sequence.
	
	(b) The composition \[H^0(X,\Omega_{X/k}^1)\hookrightarrow H^1_{\on{dR}}(X/k)\xrightarrow{\on{P}^0} H^1_{\on{dR}}(X/k)\] is equal to zero.
	\end{prop}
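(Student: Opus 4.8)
The plan is to assemble the results established earlier in this section: part (a) carries the substance, and part (b) then follows almost formally. For part (a), I would first reduce to the case where $X$ is a smooth scheme of finite type over $k$. The three maps appearing in the claimed factorization --- the Hodge edge map $H^*_{\on{dR}}(X/k)\to H^*(X,\mc{O}_X)$, the map induced by $\on{Fr}\colon\mc{O}_X\to\mc{O}_X$, and the conjugate edge map $H^*(X,\mc{O}_X)\to H^*_{\on{dR}}(X/k)$ --- are all natural in $X$: the Hodge and conjugate spectral sequences are functorial, hence so are their edge homomorphisms, and Frobenius commutes with pullback. The operations $\on{P}^0$ and $\on{Sq}^0$ are natural by \Cref{functorial}. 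Given $x\in H^n_{\on{dR}}(X/k)$, choose $Z_n\to X$ as in \Cref{property} (using \Cref{approximate} when $X=[Y/G]$). Since $\on{P}^0$ and $\on{Sq}^0$ preserve degree, both sides of the desired identity, evaluated at $x$, lie in $H^n_{\on{dR}}(X/k)$, where the pullback to $H^n_{\on{dR}}(Z_n/k)$ is injective; so it suffices to prove the identity when $X$ is a smooth scheme.

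Assume now $X$ is a smooth scheme. By \Cref{cris-compatible-scheme} the Poincar\'e isomorphism $(\ref{poincare-global})$ between $H^*((X/k)_{\on{cris}},\mc{O}_{X/k})$ and $H^*_{\on{dR}}(X/k)$ is compatible with Steenrod operations. As $\mc{O}_{X/k}$ is concentrated in degree $0$, \Cref{steenrod<0}(b) identifies $\on{P}^0$ (resp.\ $\on{Sq}^0$) on $H^*((X/k)_{\on{cris}},\mc{O}_{X/k})$ with the endomorphism induced by the Frobenius of $\mc{O}_{X/k}$, and by \Cref{frobenius} this corresponds under $(\ref{poincare-global})$ to $\Phi^*$, where $\Phi\colon\Omega_{X/k}\to\Omega_{X/k}$ agrees with $\on{Fr}$ in degree $0$ and vanishes in positive degrees. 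Hence $\on{P}^0=\Phi^*$ (resp.\ $\on{Sq}^0=\Phi^*$) on $H^*_{\on{dR}}(X/k)$. Now apply \Cref{factors} with $F=\Gamma(X,-)$ on the category of abelian sheaves on $X$, $A=\Omega_{X/k}$, and $\phi=\on{Fr}\colon\mc{O}_X\to\mc{O}_X$; the required hypothesis $d^1\phi=0$ (that is, $d\circ\on{Fr}=0$) holds since $d(f^p)=pf^{p-1}\,df=0$ in characteristic $p$. For this data the first hypercohomology spectral sequence of \Cref{factors} is the Hodge spectral sequence and the second is the conjugate spectral sequence, so \Cref{factors} delivers exactly the asserted three-term factorization of $\Phi^*$.

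For part (b), note that by part (a) the operation $\on{P}^0$ on $H^1_{\on{dR}}(X/k)$ factors through the Hodge edge map $H^1_{\on{dR}}(X/k)\to H^1(X,\mc{O}_X)$, which is the projection onto the top graded piece $H^1_{\on{dR}}(X/k)/F^1H^1_{\on{dR}}(X/k)$ of the Hodge filtration followed by an injection; its kernel is therefore $F^1H^1_{\on{dR}}(X/k)$. Since the image of the canonical map $H^0(X,\Omega^1_{X/k})\to H^1_{\on{dR}}(X/k)$ lies in $F^1H^1_{\on{dR}}(X/k)$, the edge map --- and hence $\on{P}^0$ --- vanishes on this image, which is the assertion. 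There is no hard point in either part: the only thing requiring care is the functoriality bookkeeping in the reduction of part (a), namely that the Hodge edge map, the Frobenius, and the conjugate edge map are all functorial compatibly with the approximation maps of \Cref{property}, so that injectivity in degrees $\le d$ may be invoked; everything else is a direct combination of \Cref{cris-compatible-scheme}, \Cref{steenrod<0}, \Cref{frobenius}, and \Cref{factors}.
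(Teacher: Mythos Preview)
Your proposal is correct and follows essentially the same approach as the paper: reduce to smooth schemes via \Cref{property} and functoriality, identify $\on{P}^0$/$\on{Sq}^0$ with $\Phi^*$ via the crystalline comparison (\Cref{cris-compatible-scheme}, \Cref{steenrod<0}(b), \Cref{frobenius}), then invoke \Cref{factors}; part (b) follows from (a) and the low-degree exact sequence of the Hodge spectral sequence, which you phrase equivalently in terms of the Hodge filtration. The paper's proof is terser but uses the same ingredients in the same order.
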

	
	\begin{proof}
	By \Cref{property} and \Cref{functorial}, we may assume that $X$ is a smooth scheme of finite type over $k$. By \Cref{steenrod-0}, $\on{Sq}^0$ and $\on{P}^0$ are induced by the Frobenius endomorphism of $\mc{O}_{X/k}$. We deduce from \Cref{frobenius} that $\on{Sq}^0$ and $\on{P}^0$ are induced by the homomorphism $\Phi:\Omega_{X/k}\to \Omega_{X/k}$ given by the Frobenius $\phi:\mc{O}_X\to \mc{O}_X$ in degree zero, and zero everywhere else.
		
	(a) The Hodge and conjugate spectral sequences for $X$ are a special case of the first and second hypercohomology spectral  sequence, letting $\mc{A}$ be the category of sheaves of $k$-vector spaces over $X$, $A$ be the de Rham complex of $X$, and $\Phi$ and $\phi$ as in the previous paragraph. Now (a) follows from \Cref{factors}.	
		
	(b) This follows from (a) and the exact sequence of low degree terms in the Hodge spectral sequence:
	\[0\to H^0(X,\Omega_{X/k}^1)\to H^1_{\on{dR}}(X/k)\to H^1(X,\mc{O}_X).\qedhere\]
	\end{proof}

	\begin{rmk}\label{cris-frobenius}
	The Steenrod operations $\on{P}^0$ and $\on{Sq}^0$ are not equal to the identity; see \Cref{steenrod<0}(b) or \Cref{steenrod-0}(b). From \Cref{mainthm}(a) (to be proved in the next section) we also see that they are not identically zero. As another example, let $E$ be an elliptic curve over $\F_p$. Then the Steenrod operations in degree $0$ are trivial on $H^*_{\on{dR}}(E/\F_p)$ if and only if $E$ is ordinary. More generally, the behavior of $\on{Sq}^0$ and $\on{P}^0$ is related to the Frobenius and Hodge filtration on crystalline cohomology; see e.g. \cite[Chapter 8]{berthelot1978notes}. We will not make use of this remark in the sequel.
	\end{rmk}

	\begin{rmk}
	(i) It would be interesting to know whether \Cref{property} holds for an arbitrary smooth stack of finite type over $k$. By \Cref{steenrod-0}, this would imply the validity \Cref{main-steenrod}(iv) holds for all such stacks. 
	
	(ii) One could remove the requirement that $Y$ be quasi-projective by showing that the crystalline Poincar\'e Lemma for algebraic spaces of M. Olsson \cite[Corollary 2.5.4]{olsson2007crystalline} is compatible with Steenrod operations. We chose not to do so in this paper, in order to remain within classical crystalline cohomology. Note that the crystalline Poincar\'e Lemma for algebraic $k$-stacks (even Deligne-Mumford $k$-stacks) is not known. For example, \cite{olsson2007crystalline} only addresses the situation of a representable morphism $X\to S$, where $X$ and $S$ are Deligne-Mumford stacks. If one had a Poincar\'e Lemma for algebraic $k$-stacks, one could probably prove \Cref{main-steenrod}(iv) more generally and without recourse to \Cref{property}.
	\end{rmk}

	\begin{prop}
	Let $X$ be a smooth affine $k$-scheme. Then the $p$-th power Steenrod operations on $H^*_{\on{dR}}(X/k)$ are trivial.
	\end{prop}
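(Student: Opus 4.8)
The plan is to reduce to \Cref{associative} by exhibiting the object of $\mc{C}(p)$ coming from Drury's construction as the target of a quasi-isomorphism from the honest de Rham complex of global forms. Take $\mc{T}$ to be the big \'etale topos of $X$, set $A=\Omega_{X/k}$, and fix a standard injective resolution $\nu\colon A\to I$ by $\F_p$-vector spaces on $\mc{T}$, so that the construction of \Cref{definesteenrod} produces an object $(K,\theta)\in\mc{C}(p)$ with $K=\Gamma(\mc{T},I)$ and $H^*(K)=H^*_{\on{dR}}(X/k)$. The key point where the affineness of $X$ enters is that each $\Omega^r_{X/k}$, being quasi-coherent on an affine scheme, is $\Gamma(X,-)$-acyclic; hence the natural map $\Gamma(\nu)\colon K_0\to K$ is a quasi-isomorphism, where $K_0:=\Gamma(X,\Omega_{X/k})$ is the de Rham complex of global forms, regarded as an honest (graded-commutative, associative) differential graded $\F_p$-algebra with $p$-fold product $m_p\colon K_0^{\otimes p}\to K_0$.

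Next I would equip $K_0$ with the structure map $\theta_0:=\epsilon\otimes m_p$, checking that condition (ii) of \Cref{cp} holds on the nose by taking $\phi=\epsilon_V\otimes m_p$ (which is $\Sigma_p$-equivariant since $m_p$ is, the de Rham complex being graded-commutative). Then \Cref{associative} applies directly to $(K_0,\theta_0)$: its $p$-th power Steenrod operations are trivial, i.e. $\on{Sq}(x)=x^2$ (if $p=2$) and $\on{P}(x)=x^p$ (if $p>2$) for every $x\in H^*(K_0)$.

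It then remains to prove that $\Gamma(\nu)$ defines a morphism $(K_0,\theta_0)\to(K,\theta)$ in $\mc{C}(p)$; granting this, $\Gamma(\nu)_*\colon H^*(K_0)\to H^*(K)=H^*_{\on{dR}}(X/k)$ respects Steenrod operations, and since it is an isomorphism the $p$-th power operations on $H^*_{\on{dR}}(X/k)$ must be trivial as well. To verify the morphism condition I would apply $\Gamma(\mc{T},-)$ to the commutative $\pi$-equivariant square (\ref{key-diagram}), and combine the outcome with the projection formula $\Gamma(\mc{T},\underline{\mc{H}om}_{\F_p}(e^{-1}W,I))=\underline{\on{Hom}}_{\F_p}(W,K)$ and the lax-monoidality maps $K_0^{\otimes p}\to\Gamma(\mc{T},A^{\otimes p})$ and $K^{\otimes p}\to\Gamma(\mc{T},I^{\otimes p})$. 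This produces a commutative square of $\pi$-equivariant complexes whose bottom arrow is (the adjoint of) Drury's $\theta$ and whose top arrow is $\Gamma(\nu)$ precomposed with (the adjoint of) $\theta_0=\epsilon\otimes m_p$; under the tensor-hom adjunction this is exactly the square witnessing that $\Gamma(\nu)$ is a morphism in $\mc{C}(p)$.

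The one step that is not purely formal is this last identification of Drury's $\theta$ with $\epsilon\otimes m_p$ after passing to global sections. But since the right-hand vertical map of (\ref{key-diagram}) factors as $A\xrightarrow{\nu}I\to\underline{\mc{H}om}_{\F_p}(e^{-1}W,I)$ through the augmentation of $W$, the square (\ref{key-diagram}) is literally commutative, and $\Gamma(\mc{T},-)$ is additive and lax monoidal, I expect this to be a short diagram chase rather than a genuine difficulty; the substantive inputs are really just the affine vanishing of higher cohomology and \Cref{associative}.
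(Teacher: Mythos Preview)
Your argument is correct. Taking global sections of the commutative square (\ref{key-diagram}) and precomposing with the lax-monoidal maps yields, after the tensor-hom adjunction, exactly the strict commutativity of the $\mc{C}(p)$-morphism square for $\Gamma(\nu)\colon (K_0,\epsilon\otimes m_p)\to (K,\theta)$; together with the affine vanishing of higher cohomology and \Cref{associative}, this finishes the proof. Your phrasing in the final paragraph (``identification of Drury's $\theta$ with $\epsilon\otimes m_p$'') is slightly misleading, since no such identification holds on $K$ itself; what you actually establish, and all you need, is $\theta\circ(\on{id}\otimes\Gamma(\nu)^{\otimes p})=\Gamma(\nu)\circ\theta_0$.

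The paper takes a different route: it invokes the \v{C}ech framework of \Cref{cech}, applying \Cref{cech-to-derived} to the trivial cover $\on{id}\colon X\to X$, and then shows that $\Gamma(X,\Omega_{X/k})$ sits inside $\on{Tot}\check{C}(U,\Omega_{X/k})$ as the totalization of the level-zero cosimplicial subobject, on which the $\mc{Z}(p)$-action visibly reduces to $\epsilon\otimes m_p$. Both arguments terminate in \Cref{associative}, but yours is more economical: it works entirely within Drury's setting of \Cref{definesteenrod} and uses only the commutativity of (\ref{key-diagram}), avoiding the Eilenberg--Zilber operad and the \v{C}ech machinery altogether. The paper's approach, by contrast, exercises the comparison of \Cref{cech-to-derived}, which is needed elsewhere (notably for the Bockstein in \Cref{bockstein}), so there it is natural to reuse that infrastructure.
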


	\begin{proof}
	We apply \Cref{cech-to-derived} to $\mc{S}$ the big \'etale (or Zariski) site of $X$, $U$ the Cech nerve of the identity $X\to X$, and $A=\Omega_{X/k}$. We deduce that the Steenrod operations on $H^*_{\on{dR}}(X/k)$ are trivial if and only if the Steenrod operations on $H^*(\on{Tot}\check{C}(U,\Omega_{X/k}))$ are trivial. It is easy to see that $\Gamma(X,\Omega_{X/k})$ is a summand of $H^*(\on{Tot}\check{C}(U,\Omega_{X/k}))$, and that the projection $\on{Tot}\check{C}(U,\Omega_{X/k})\to \Gamma(X,\Omega_{X/k})$ is a quasi-isomorphism. The summand $\Gamma(X,\Omega_{X/k})$ arises as the totalization of the cosimplicial submodule of $\check{C}_0(U,\Omega_{X/k})$ given by $\Gamma(X,\Omega_{X/k})$ in level zero, and zero in higher levels. Thus there is a commutative square
	\[
	\begin{tikzcd}
	\mc{Z}(p)\otimes \on{Tot}\check{C}(U,\Omega_{X/k})^{\otimes p}\arrow[r] \arrow[d] & \on{Tot}\check{C}(U,\Omega_{X/k}) \arrow[d]\\
	\mc{Z}(p)\otimes \Gamma(X, \Omega_{X/k})^{\otimes p} \arrow[r] & \Gamma(X, \Omega_{X/k}),
	\end{tikzcd}
	\]
	where the vertical arrows are the natural projections, the top horizontal arrow is (\ref{zp}), and the bottom arrow is a direct summand of the top arrow. As $\Gamma(X,\Omega_{X/k})$ is the totalization of the degree zero component of $\check{C}_0(U,\Omega_{X/k})$, it follows from the definitions that the bottom arrow is the tensor product of the augmentation of $\mc{Z}(p)$ and the $p$-fold multiplication of $\Omega_{X/k}$. Pre-composing with the quasi-isomorphism $W\to \mc{Z}(p)$ of (\ref{w-e-z}), we obtain a commutative square
	\[
	\begin{tikzcd}
	W\otimes \on{Tot}\check{C}(U,\Omega_{X/k})^{\otimes p}\arrow[r,"\check{\theta}"] \arrow[d] & \on{Tot}\check{C}(U,\Omega_{X/k}) \arrow[d]\\
	W\otimes \Gamma(X, \Omega_{X/k})^{\otimes p} \arrow[r,"\epsilon\otimes m_p"] & \Gamma(X, \Omega_{X/k}).
	\end{tikzcd}
	\]
	Here $\check{\theta}$ is the map of (\ref{check-global}), $\epsilon$ is the augmentation of $W$, and $m_p$ is the $p$-fold multiplication map. As the projection $\on{Tot}\check{C}(U,\Omega_{X/k})\to \Gamma(X,\Omega_{X/k})$ is a quasi-isomorphism, the conclusion will follow if we can show that the $p$-th power Steenrod operations on $H^*(\Gamma(X, \Omega_{X/k}))$ associated to the object $(\Gamma(X, \Omega_{X/k}),\epsilon\otimes m_p)$ of $\mc{C}(p)$ are trivial. This is true by \Cref{associative}. 
	\end{proof}

	\subsection{Bockstein homomorphism}\label{bock-cech}
	
	\begin{lemma}\label{smooth-etale}
		Let $k$ be a field, let $X$ be an algebraic stack over $k$, and denote by $X_{\on{\acute{E}T}}$ and $X_{\on{SM}}$ the big \'etale and big smooth site of $X$, respectively.
		
		(a) Let $F$ be an \'etale sheaf of abelian groups on $X$. Then $F$ is a smooth sheaf on $X$, and the canonical homomorphism
		\[H^*(X_{\on{\acute{E}T}},F)\to H^*(X_{\on{SM}},F)\] 
		is an isomorphism.
		
		(b) Assume that $\on{char}k=p>0$, and that $X$ is smooth over $k$. The canonical homomorphism \[H^*_{\on{dR}}(X/k)\to \mathbb{H}^*(X_{\on{SM}},\Omega_{X/k})\] is an isomorphism, compatible with Steenrod operations.
	\end{lemma}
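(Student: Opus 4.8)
The plan is to realise the canonical comparison map of the statement as the functoriality map (\ref{functorial-map}) attached to a morphism of topoi, and then to quote \Cref{functorial}. The big \'etale site $X_{\on{\acute{E}T}}$ and the big smooth site $X_{\on{SM}}$ have the same underlying category, and every \'etale covering family is a smooth covering family; hence there is a standard morphism of topoi $\epsilon=(\epsilon_*,\epsilon^{-1}):X_{\on{SM}}\to X_{\on{\acute{E}T}}$, in which $\epsilon_*$ sends a smooth sheaf to the same object regarded as an \'etale sheaf, and $\epsilon^{-1}$ is smooth sheafification (composed with the inclusion of \'etale sheaves into presheaves). In particular $\epsilon^{-1}$ is exact, which is what \Cref{functorial} requires. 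The single geometric input we will use is that a smooth surjection of algebraic stacks admits sections \'etale-locally on the target --- classical for schemes, and reduced to that case by passing to smooth atlases --- or, equivalently, that every smooth covering family admits a refinement by an \'etale covering family.

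For (a): granting this input, the \'etale covering sieves on each object of $X_{\on{SM}}$ are cofinal among the smooth covering sieves, so by the comparison lemma for sites a presheaf satisfying \'etale descent also satisfies smooth descent. Thus an \'etale sheaf of abelian groups $F$ is already a smooth sheaf, $\epsilon^{-1}F=F$, the morphism $\epsilon$ is an equivalence of topoi, $R^q\epsilon_*F=0$ for $q>0$, and $\epsilon_*F=F$; the Leray spectral sequence for $\epsilon$ then yields the canonical isomorphism $H^*(X_{\on{\acute{E}T}},F)\xrightarrow{\sim}H^*(X_{\on{SM}},F)$. (If one prefers to avoid the comparison lemma, the sheaf condition for $F$ on $X_{\on{SM}}$ is a short diagram chase from \'etale-local sections of smooth surjections, and $R^q\epsilon_*F=0$ for $q>0$ follows because $R^q\epsilon_*F$ is the \'etale sheafification of $U\mapsto H^q(U_{\on{SM}},F)$, which vanishes locally for the same reason.) Since $X_{\on{\acute{E}T}}$ has sufficiently many points, so does the equivalent topos $X_{\on{SM}}$, and the constructions of \Cref{definesteenrod} are available there.

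For (b): regard $\Omega_{X/k}$ as a bounded-below complex of \'etale sheaves. By (a) (applied over each object of $X_{\on{SM}}$), each $\Omega^j_{X/k}$ is a smooth sheaf with $R\epsilon_*\Omega^j_{X/k}\cong\Omega^j_{X/k}$, so $R\epsilon_*\Omega_{X/k}\cong\Omega_{X/k}$ in the derived category of abelian sheaves on $X_{\on{\acute{E}T}}$; taking hypercohomology gives $\mathbb{H}^*(X_{\on{SM}},\Omega_{X/k})\cong\mathbb{H}^*(X_{\on{\acute{E}T}},R\epsilon_*\Omega_{X/k})=H^*_{\on{dR}}(X/k)$, and unwinding the construction this is the canonical homomorphism of the statement. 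Finally, apply \Cref{functorial} to the morphism of topoi $\epsilon:X_{\on{SM}}\to X_{\on{\acute{E}T}}$ together with the canonical homomorphism of commutative differential graded $\F_p$-algebras $\epsilon^{-1}\Omega_{X/k}\to\Omega_{X/k}$ on $X_{\on{SM}}$ (an isomorphism by (a)): the resulting map (\ref{functorial-map}) is precisely the comparison isomorphism above, which is therefore compatible with the Steenrod operations.

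The only substantive point is the geometric input that smooth surjections of algebraic stacks have sections \'etale-locally on the base, equivalently that \'etale covers are cofinal among smooth covers; once this (or the comparison lemma for sites that it feeds into) is in hand, everything else is formal. In a self-contained version avoiding the comparison lemma, the one mild nuisance would be the two routine diagram chases showing that \'etale sheaves satisfy smooth descent and that $R^q\epsilon_*$ vanishes in positive degrees.
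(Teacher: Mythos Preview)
Your argument is correct and follows essentially the same route as the paper: both hinge on the fact that smooth covers can be refined by \'etale covers (the paper cites \cite[Tags 005V, 00VX]{stacks-project}), and both deduce compatibility with Steenrod operations directly from \Cref{functorial}. Your treatment is more explicit---you spell out the morphism of topoi $\epsilon$, the Leray/comparison-lemma mechanism for (a), and package (b) via $R\epsilon_*\Omega_{X/k}\cong\Omega_{X/k}$ rather than via the map of hypercohomology spectral sequences---but these are equivalent formulations of the same argument.
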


	\begin{proof}
		(a) The fact that $F$ is a smooth sheaf follows from the fact that a covering in the smooth topology can always be refined by a covering in the \'etale topology; see \cite[Tags 005V, 00VX]{stacks-project}.
		
		(b) By (a), $\Omega_{X/k}$ is a complex of smooth sheaves. We have a homomorphism between the \'etale hypercohomology spectral sequence for $\Omega_{X/k}$ to the smooth hypercohomology spectral sequence for $\Omega_{X/k}$. Applying (a) to $F=\Omega_{{X}/k}^j$ for every $j\geq 0$, we see that this homomorphism is an isomorphism between the $E_2$-pages, hence it induces an isomorphism of the abutments. The compatiblity with Steenrod operations is a special case of \Cref{functorial}. 
	\end{proof}
	
	Let $(K,\theta)\in \mc{C}(p)$. Recall that one may define a Bockstein homomorphism $\beta$ on $H^*(K)$, assuming that $(K,\theta)$ is reduced, that is, that there exists $(\tilde{K},\tilde{\theta})\in \mc{C}(p,\Z/p^2\Z)$ whose reduction modulo $p$ is isomorphic to $(K,\theta)$ and such that $\tilde{K}$ is flat over $\Z/p^2\Z$.
	
	Let $X$ be a smooth algebraic stack of finite type over a commutative ring $R$, and assume that $X$ has affine diagonal over $R$. By \cite[Tag 04YA]{stacks-project}, there exists a smooth surjective morphism from a smooth affine $R$-scheme of finite type to $X$. Let $U$ be the \v{C}ech nerve of a covering of $X$ by a smooth affine $R$-scheme. Then $\Gamma(U_n,\Omega_{X/R}^j)=\Gamma(U_n,\Omega_{U_n/R}^j)$ is a flat $R$-module for every $n,j\geq 0$, and so $\on{Tot}\check{C}(U,\Omega_{X/R})$ is a complex of flat $R$-modules. 
	
	Let $R\to S$ be a ring homomorphism. By the theorem on cohomology and affine base change for quasi-coherent sheaves \cite[Tag 02KG]{stacks-project}, for all $n,j\geq 0$ the canonical map $\Omega^j_{U_n/R}\otimes_RS\to \Omega^j_{(U_n)_S/S}$ is an isomorphism of quasi-coherent $\mc{O}_{X_S}$-modules. Moreover, the canonical maps are compatible with the de Rham differentials. 
	We thus obtain an isomorphism of complexes of  $S$-modules
	\begin{equation}\label{derham-iso}\on{Tot}\check{C}(U,\Omega_{X/R})\otimes_RS\xrightarrow{\sim} \on{Tot}\check{C}(U_S,\Omega_{X_S/S}).\end{equation}
	
		\begin{prop}\label{bockstein}
		Let $k$ be a perfect field of characteristic $p$, and let $X$ be a smooth algebraic stack over $k$. Assume that there exists a smooth algebraic stack $\tilde{X}$ of finite type and with affine diagonal over $W_2(k)$, such that $X=\tilde{X}\times_{W_2(k)}k$. Then there exists a group homomorphism \[\beta: H^*_{\on{dR}}(X/k)\to H^{*+1}_{\on{dR}}(X/k),\] satisfying properties (i) and (ii) of \Cref{bockstein-properties}.
	\end{prop}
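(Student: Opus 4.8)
The plan is to realize $H^*_{\on{dR}}(X/k)$ as the cohomology of a \emph{reduced} object of $\mc{C}(p)$, compatibly with Steenrod operations, and then to apply \Cref{bockstein-properties}.

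First I would pass to the big smooth site. By \Cref{smooth-etale}(b), the canonical map $H^*_{\on{dR}}(X/k)\to \mathbb{H}^*(X_{\on{SM}},\Omega_{X/k})$ is an isomorphism compatible with Steenrod operations. Since $\tilde X$ is smooth of finite type with affine diagonal over $W_2(k)$, by \cite[Tag 04YA]{stacks-project} there is a smooth surjection $\tilde U_0\to \tilde X$ with $\tilde U_0$ an affine $W_2(k)$-scheme of finite type; let $\tilde U$ be its \v{C}ech nerve, put $U_0:=\tilde U_0\times_{W_2(k)}k$ and let $U$ be its \v{C}ech nerve, a cover of $X$ in $X_{\on{SM}}$. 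As $X$ and $\tilde X$ have affine diagonal, every $U_n$ and $\tilde U_n$ is affine, so each $\Omega^q_{X/k}|_{U_n}$ is a quasi-coherent sheaf on an affine scheme and $H^i(U_n,\Omega^q_{X/k})=0$ for $i>0$. Hence \Cref{cech-to-derived}, applied with $\mc{T}=X_{\on{SM}}$ and $A=\Omega_{X/k}$, yields an isomorphism
\[H^*\bigl(\on{Tot}\check{C}(U,\Omega_{X/k})\bigr)\xrightarrow{\ \sim\ }\mathbb{H}^*(X_{\on{SM}},\Omega_{X/k})=H^*_{\on{dR}}(X/k)\]
compatible with cup products and Steenrod operations, where the left-hand side is the cohomology of the object $(K,\theta):=(\on{Tot}\check{C}(U,\Omega_{X/k}),\check\theta)$ of $\mc{C}(p)$ constructed in \Cref{cech}. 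It therefore suffices to show that $(K,\theta)$ is reduced.

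To construct a lift to $\mc{C}(p,\Z/p^2\Z)$ I would run the \v{C}ech construction of \Cref{cech} over the ring $R=\Z/p^2\Z$, applied to $\tilde X$ with the cover $\tilde U$. Here one uses that $k$ is perfect: $W_2(k)$ is flat over $\Z/p^2\Z$ and $\Omega^1_{W_2(k)/(\Z/p^2\Z)}=0$, so $\Omega_{\tilde X/(\Z/p^2\Z)}=\Omega_{\tilde X/W_2(k)}$ is a complex of locally free $\mc{O}_{\tilde X}$-modules and each $\Gamma(\tilde U_n,\Omega^j_{\tilde X/(\Z/p^2\Z)})$ is a finite projective module over the flat $\Z/p^2\Z$-algebra $\Gamma(\tilde U_n,\mc{O}_{\tilde U_n})$, hence is $\Z/p^2\Z$-flat. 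Thus $\tilde K:=\on{Tot}\check{C}(\tilde U,\Omega_{\tilde X/(\Z/p^2\Z)})$ is flat over $\Z/p^2\Z$, and it carries a map $\tilde\theta$ making $(\tilde K,\tilde\theta)$ an object of $\mc{C}(p,\Z/p^2\Z)$. Finally, base-changing along $\Z/p^2\Z\to\F_p$ and using the isomorphism (\ref{derham-iso}), together with $\tilde X\times_{\Z/p^2\Z}\F_p=\tilde X\times_{W_2(k)}k=X$ and $\Omega_{X/\F_p}=\Omega_{X/k}$ ($k$ being perfect), gives $\tilde K\otimes_{\Z/p^2\Z}\F_p\cong K$; by the compatibility of the \v{C}ech construction with base change of rings, this isomorphism identifies the reduction of $\tilde\theta$ modulo $p$ with $\theta$ up to $\pi$-homotopy. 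Hence $(K,\theta)$ is reduced.

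By \Cref{bockstein-properties}, the reduced object $(K,\theta)$ carries a Bockstein $\beta\colon H^*(K)\to H^{*+1}(K)$ which is a graded derivation and satisfies $\beta=\on{Sq}^1$ when $p=2$ and $\beta\on{P}^s=\beta\circ\on{P}^s$ when $p>2$; transporting $\beta$ along the isomorphism $H^*(K)\cong H^*_{\on{dR}}(X/k)$ of the second paragraph produces the desired homomorphism. I expect the main work to lie in the third paragraph: one must verify that running the \v{C}ech construction over $\Z/p^2\Z$ and reducing modulo $p$ recovers, up to $\pi$-homotopy, the very object $(K,\theta)$ whose cohomology computes the Steenrod operations on $H^*_{\on{dR}}(X/k)$. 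This rests on the uniqueness up to $\pi$-homotopy of the maps entering (\ref{check-global}) and on the perfectness of $k$ to reconcile the $W_2(k)$-linear and $\Z/p^2\Z$-linear de Rham complexes.
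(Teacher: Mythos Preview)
Your approach is essentially the same as the paper's: pass to the smooth site, use the \v{C}ech complex of an affine smooth cover coming from the $W_2(k)$-lift, invoke \Cref{cech-to-derived} to identify $H^*_{\on{dR}}(X/k)$ with $H^*(K)$ compatibly with Steenrod operations, and then exhibit a flat $\Z/p^2\Z$-lift $(\tilde K,\tilde\theta)$ to conclude that $(K,\theta)$ is reduced. The paper's proof likewise uses (\ref{derham-iso}) and the flatness of $\Omega_{\tilde U_n/W_2(k)}$ together with the flatness of $W_2(k)$ over $\Z/p^2\Z$.

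One technical point deserves attention. The \v{C}ech machinery of \Cref{cech} is set up for a site $\mc{S}$ admitting a terminal object $e$ and a cover $U_0\to e$ \emph{in} $\mc{S}$; \Cref{cech-to-derived} is stated for that situation. The big smooth site $X_{\on{SM}}$ of a stack $X$ consists of schemes over $X$, so when $X$ is not a scheme it has no terminal object to which $U_0$ maps. The paper handles this by introducing a site $\mc{S}$ whose objects are algebraic stacks over $X$ (so $X$ itself is terminal), showing via Verdier's comparison theorem that it presents the same topos as $X_{\on{SM}}$, and then verifying that $H^i(\mc{S}/U_n,\eta_n^{-1}\Omega^j_{X/k})=H^i(U_n,\Omega^j_{U_n/k})=0$ for $i>0$ by descending through the big-smooth, big-\'etale, and small-\'etale sites before invoking Serre vanishing. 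Your sentence ``each $\Omega^q_{X/k}|_{U_n}$ is quasi-coherent on an affine scheme and $H^i(U_n,\Omega^q_{X/k})=0$'' elides both of these steps; they are not difficult, but you should make them explicit so that the hypotheses of \Cref{cech-to-derived} are visibly met. Apart from this, your argument matches the paper's; your use of $\Omega_{\tilde X/(\Z/p^2\Z)}$ in place of $\Omega_{\tilde X/W_2(k)}$ is harmless since, as you note, perfectness of $k$ makes them coincide.
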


\begin{proof}
	Let $\tilde{U}$ be the \v{C}ech nerve associated to a smooth surjective morphism from a smooth affine $W_2(k)$-scheme to $\tilde{X}$, and define $U:=\tilde{U}\times_{W_2(k)}k$. For every $n\geq 0$, $\tilde{U}_n$ is a smooth and affine $W_2(k)$-scheme. 
	For all $n,j\geq 0$, letting $\eta_n:U_n\to X$ denote the natural projection map, we have an isomorphism $\Omega_{U_n/k}^j\cong \eta_n^{-1}\Omega^j_{X/k}$ of big \'etale sheaves on $U_n$. Since ${U}_n$ is affine and $\Omega_{U_n/k}^j$ is a coherent sheaf on ${U}_n$ for every $n,j\geq 0$, by Serre's Vanishing Theorem we have $H^q(U_n,\Omega_{U_n/k}^j)=0$ for all $q>0$ and $n,j\geq 0$. 
	
	Let $\mc{S}$ be the site whose objects are algebraic stacks over $X$, whose morphisms are $1$-morphisms of algebraic stacks over $X$, and whose covers are families of jointly surjective smooth morphisms, and let $\mc{T}$ be the topos associated to $\mc{S}$. With the notation of \Cref{smooth-etale}, we have morphism of sites \[X_{\on{\acute{E}T}}\to X_{\on{SM}}\to \mc{S},\]
	
	Since every algebraic stack has a smooth cover by schemes, by Verdier's Comparison Theorem \cite[III, Th\'eor\`eme 4.1]{sga4I} the morphism on the right induces an equivalence of topoi. In particular, we have an induced isomorphism \[\mathbb{H}^*(X_{\on{SM}},\Omega_{{X}/k})\xrightarrow{\sim}\mathbb{H}^*(\mc{S},\Omega_{X/k})=\mathbb{H}^*(\mc{T},\Omega_{X/k}),\]  where we also denote by $\Omega_{X/k}$ the sheaf on $\mc{S}$ induced by the de Rham complex on $X_{\on{SM}}$. This isomorphism is compatible with Steenrod operations by \Cref{functorial}. Combining this with \Cref{smooth-etale}, we get a canonical isomorphism \[H^*_{\on{dR}}(X/k)\xrightarrow{\sim}\mathbb{H}^*(\mc{T},\Omega_{X/k}),\] which is compatible with Steenrod operations. 
	
	Note that $X$ is a terminal object in $\mc{S}$, and that a smooth surjective morphism from a scheme to $X$ is a cover in $\mc{S}$. Moreover, by \Cref{smooth-etale} and \cite[Tags 06W0, 0DGB, 03P2]{stacks-project}, for all $j,n\geq 0$ and $i\geq 1$ we have
	\begin{align*}H^i(\mc{S}/U_n,\eta_n^{-1}\Omega_{X/k}^{j})&=H^i((\on{Sch}/U_n)_{\on{SM}},\Omega_{U_n/k}^j)\\ &=H^i((\on{Sch}/U_n)_{\on{\acute{E}T}},\Omega_{U_n/k}^j)\\ 
	&=H^i((U_n)_{\on{\acute{e}t}},\Omega_{U_n/k}^j)\\
	&=H^i(U_n,\Omega_{U_n/k}^j)=0.\end{align*}
	In the last step we have used Serre's Vanishing Theorem for the cohomology of quasi-coherent sheaves on affine schemes. Thus, we may apply \Cref{cech-to-derived} to $\mc{T}$, $\Omega_{X/k}$ and $U$. 
	We deduce the existence of an isomorphism 
	\begin{equation}\label{iso-to-reduced}
	H^*(\on{Tot}\check{C}({U},\Omega_{{X}/k}))\xrightarrow{\sim}H^*_{\on{dR}}(X/k)
	\end{equation}
	compatible with Steenrod operations. 
	
	 The map (\ref{check-global}) is compatible with extension of scalars, thus (\ref{derham-iso}) induces an isomorphism
	\[(\on{Tot}\check{C}(\tilde{U},\Omega_{\tilde{X}/W_2(k)}),\check{\theta})\otimes_{\Z/p^2\Z}\F_p\cong (\on{Tot}\check{C}({U},\Omega_{{X}/k}),\check{\theta})\] in $\mc{C}(p)$. For every $n\geq 0$, since $\tilde{U}_n$ is smooth over $W_2(k)$, by \cite[Tag 02G1]{stacks-project} the $W_2(k)$-module $\Omega_{\tilde{U}_n/W_2(k)}$ is finite locally free, and in particular it is flat. Recall that $W_2(k)$ is flat over $\Z/p^2\Z$; in fact, it is the unique flat lifting of $\F_p$ over $\Z/p^2\Z$. Therefore, $\on{Tot}\check{C}(\tilde{U},\Omega_{\tilde{X}/W_2(k)})$ is flat over $\Z/p^2\Z$, and so $(\on{Tot}\check{C}(U,\Omega_{X/k}),\check{\theta})$ is reduced. As recalled at the end of \Cref{def-steenrod}, it now follows from \cite[Proposition 2.3(v)]{may1970general} that the Bockstein homomorphism $\beta$ of May's setting is defined on $H^*(\on{Tot}\check{C}({U},\Omega_{{X}/k}))$. Using (\ref{iso-to-reduced}), we obtain a Bockstein on $\mathbb{H}^*(\tilde{X},\Omega_{\tilde{X}/k})$. The conclusion now follows from \Cref{bockstein-properties}.
\end{proof}
	
	\begin{rmk}\label{bockstein-totaro}
	In the setting of \Cref{bockstein}, let $\iota:X\hookrightarrow \tilde{X}$ be the natural closed embedding. Following an idea of Totaro \cite[Proof of Theorem 11.1]{totaro2018hodge}, we may define a Bockstein homomorphism
	$H^*_{\on{dR}}(X/\F_p)\to H^{*+1}_{\on{dR}}(X/k)$ as the connecting homomorphism in the long exact sequence associated to \[0\to \iota_*\Omega_{X/k}\to \Omega_{\tilde{X}/W_2(k)}\to \iota_*\Omega_{X/k}\to 0.\]
	If $X$ is a scheme, one can give a definition even when without assuming that $X$ admits a lifting to $W_2(k)$, by using the crystalline Poincar\'e lemma and the short exact sequence
	\[0\to \iota_*\mc{O}_{X/k}\to \mc{O}_{X/W_2(k)}\to \iota_*\mc{O}_{X/k}\to 0.\]
	As we now show, \Cref{mainthm}(c) implies that Totaro's Bockstein is different from the Bockstein of \Cref{bockstein}. Recall from (\ref{dr-sing-orth}) that \[H^*_{\on{dR}}(B\on{O}_{2r}/\F_2)=\F_2[u_1,\dots,u_{2r}],\] where $|u_i|=i$. Then Totaro's homomorphism is non-trivial on the $u_{2a}$ (see the proof of \cite[Theorem 11.1]{totaro2018hodge}). On the other hand, by \Cref{mainthm}(c) all Steenrod operations, and in particular $\beta=\on{Sq}^1$, are trivial on the $u_{2a}$. 
	\end{rmk}

	\section{Proofs of Theorems \ref{main-steenrod} and \ref{mainthm}}\label{proof}

	\begin{proof}[Proof of \Cref{main-steenrod}]
	The naturality follows from \Cref{functorial}. Properties (i), (ii), (iii) hold by \Cref{sheaf-properties}. Property (iv) follows from \Cref{approximate}, \Cref{cris-compatible} and \Cref{steenrod-0}(a). Property (v) was proved in \Cref{bockstein}.	
	\end{proof}

	\begin{proof}[Proof of \Cref{mainthm}(a)]
		We denote by $B_{\on{simp}}G$ the simplicial classifying space of $G$; see \cite[Chapter 16, \S 5]{may1999concise}. Since $G$ is a finite discrete group, $B_{\on{simp}}G$ is a simplicial set. It is the quotient of the contractible simplicial set $E_{\on{simp}}G$ by the free action of $G$. We write $B_{\on{top}}G$ and $E_{\on{top}}G$ for the geometric realizations of $B_{\on{simp}}G$ and $E_{\on{simp}}G$, respectively; they are CW complexes.
		
		Let $\F_p\to I$ be an injective resolution of sheaves of $\F_p$-vector spaces on $B_{\on{top}}G$. Let $\theta:W\otimes \Gamma(B_{\on{top}}G,I)^{\otimes p}\to \Gamma(B_{\on{top}}G,I)$ be a $\pi$-equivariant homomorphism of $\F_p[\pi]$-complexes such that $(\Gamma(B_{\on{top}}G,I), \theta)$ is an element of $\mc{C}(p)$, as constructed in \Cref{definesteenrod}. By \Cref{epstein=may} and \cite[Theorem p. 206]{epstein1966steenrod}, the induced Steenrod operations on the sheaf cohomology ring $H^*(B_{\on{top}}G,\F_p)$ are the classical topological Steenrod operations.
		
		Let $X$ be a paracompact and Hausdorff topological space (for example, a CW complex). Let $\on{Op}(X)$ denote the site of open embeddings of $X$, and let $\on{Homeo}(X)$ be the site whose objects are local homeomorphisms $Y\to X$, whose morphisms are continuous maps $Y\to Y'$ over $X$ (they are automatically local homeomorphisms), and whose covers are families $\set{U_i\to Y}$ of jointly surjective local homeomorphisms. Since open embeddings are local homeomorphisms, there is an obvious morphism of sites $\on{Op}(X)\to \on{Homeo}(X)$ which by Verdier's Comparison Theorem \cite[III, Th\'eor\`eme 4.1]{sga4I} induces an equivalence of topoi; see also the beginning of \cite[XI, \S 4]{sga4III}. By \Cref{functorial}, we may compute the Steenrod operations on $H^*(B_{\on{top}}G,\F_p)$ while viewing $\F_p$ as a sheaf in either $\on{Op}(B_{\on{top}}G)$ or $\on{Homeo}(B_{\on{top}}G)$. It follows that we may apply \Cref{cech-to-derived} to the topos associated to $\on{Homeo}(B_{\on{top}}G)$ and to the cover $E_{\on{top}}G\to B_{\on{top}}G$. Let $U$ denote the \v{C}ech nerve of $E_{\on{top}}G\to B_{\on{top}}G$. We have a morphism of simplicial topological spaces $U\to B_{\on{simp}}G$, given by the unique map $E_{\on{top}}G\to \set{*}$ in level $0$, and by projections onto the $G^{n-1}$ factor in level $n$, for all $n\geq 1$. 
		
		Recall that the big \'etale topos of $BG$ is the topos associated to the site whose objects are schemes over $BG$, whose arrows are morphisms of schemes over $BG$, and whose covers are families of jointly surjective \'etale morphisms of schemes over $BG$. By Verdier's Comparison Theorem, it is also the topos associated to the site whose objects are representable morphisms of algebraic stacks $X\to BG$, whose arrows are morphisms of algebraic stacks over $BG$, and whose covers are families of jointly surjective \'etale morphisms. Since $BG$ is a terminal object in the second site, we may apply \Cref{cech-to-derived} to the big \'etale topos of the algebraic stack $BG$, and to the universal $G$-torsor $\Spec \F_p\to BG$. The \v{C}ech nerve of this morphism is exactly $B_{\on{simp}}G$. 
		
		By \Cref{remark-functorial}, the projection $U\to B_{\on{simp}}G$ and the inclusion $\F_p\hookrightarrow \Omega_{BG/\F_p}$ induce the commutative squares of $\pi$-equivariant maps:
		\[
		\begin{tikzcd}
		\mc{Z}(p)\otimes \on{Tot}\check{C}(U,\F_p)^{\otimes p} \arrow[r] \arrow[d,"\wr"] & \on{Tot}\check{C}(U,\F_p)\arrow[d,"\wr"] \\
		\mc{Z}(p)\otimes \on{Tot}\check{C}(B_{\on{simp}}G,\F_p)^{\otimes p} \arrow[r] & \on{Tot}\check{C}(B_{\on{simp}}G,\F_p)
		\end{tikzcd}
		\]	
		and
		\[
		\begin{tikzcd}
		\mc{Z}(p)\otimes \on{Tot}\check{C}(B_{\on{simp}}G,\F_p)^{\otimes p} \arrow[r] \arrow[d,"\wr"] & \on{Tot}\check{C}(B_{\on{simp}}G,\F_p)\arrow[d,"\wr"] \\
		\mc{Z}(p)\otimes \on{Tot}\check{C}(B_{\on{simp}}G,\Omega_{BG/\F_p})^{\otimes p} \arrow[r] & \on{Tot}\check{C}(B_{\on{simp}}G,\Omega_{BG/\F_p}). 
		\end{tikzcd}
		\]	
		
		The  vertical maps in the first square are isomorphisms because $E_{\on{top}}G$ is connected and $\F_p$ is a constant sheaf. Since $G$ is discrete, we have $\eta_j^{-1}\Omega^q_{BG/\F_p}=\Omega^q_{G^j/\F_p}=0$ for all $q>0$ and $j\geq 0$, and $\eta_j^{-1}\mc{O}_{BG/\F_p}=\mc{O}_{G^j/\F_p}$; see \cite[Tag 06TU]{stacks-project}. (Recall that the $\eta_j^{-1}$ are pullbacks of sheaves on big \'etale sites.) Thus the vertical maps in the second square are isomorphisms too. 
		Combining the two squares and pre-composing with the quasi-isomorphism $W\to\mc{Z}(p)$ of (\ref{w-e-z}), we obtain a square of $\pi$-equivariant maps
		\[
		\begin{tikzcd}
		W\otimes \on{Tot}\check{C}(U,\F_p)^{\otimes p} \arrow[r,"\check{\theta}_t"] \arrow[d] & \on{Tot}\check{C}(U,\F_p)\arrow[d] \\
		W\otimes \on{Tot}\check{C}(B_{\on{simp}}G,\Omega_{BG/\F_p})^{\otimes p} \arrow[r,"\check{\theta}_a"] & \on{Tot}\check{C}(B_{\on{simp}}G,\Omega_{BG/\F_p}), 
		\end{tikzcd}
		\]
		where $\check{\theta}_t$ and $\check{\theta}_a$ are given by (\ref{check-global}). We conclude that  the second projection $U\to B_{\on{simp}}G$ induces an isomorphism \[\on{Tot}(\check{C}(U,\F_p),\check{\theta}_t)\cong (\on{Tot}\check{C}(B_{\on{simp}}G,\Omega_{BG/\F_p}),\check{\theta}_a)\] in $\mc{C}(p)$, and this completes the proof.
 	\end{proof}

	In order to prove \Cref{mainthm}(b) and (c), we need some auxiliary computations.
	
	\begin{lemma}
		Let $H^*_{\on{dR}}(\P^n/\F_p)=\F_p[x]/(x^{n+1})$, where $x$ has degree $2$.
		\begin{enumerate}[label=(\alph*)]
			\item If $p=2$, then $\on{Sq}(x)=x^2$.
			\item If $p>2$, then $\on{P}(x)=x^p$ and $\beta(x)=0$.
		\end{enumerate}
	\end{lemma}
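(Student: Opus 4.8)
The plan is to first record the classical structure of $H^*_{\on{dR}}(\P^n/\F_p)$, and then to observe that once the cohomological degree of $x$ is fixed, the general properties established earlier pin down every Steenrod operation on $x$.

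First I would run the Hodge--de Rham spectral sequence $E_1^{rs}=H^s(\P^n,\Omega^r_{\P^n/\F_p})\Rightarrow H^{r+s}_{\on{dR}}(\P^n/\F_p)$. Its $E_1$-page is $1$-dimensional in each bidegree $(r,r)$ with $0\leq r\leq n$ and vanishes elsewhere (Bott vanishing on $\P^n$). Since every nonzero term sits in even total degree, while a differential shifts the total degree by $1$, all differentials vanish and the spectral sequence degenerates at $E_1$. Hence $H^i_{\on{dR}}(\P^n/\F_p)$ is $1$-dimensional for $i\in\{0,2,4,\dots,2n\}$ and zero otherwise; in particular $H^3_{\on{dR}}(\P^n/\F_p)=0$. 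Choosing a generator $x$ of $H^2_{\on{dR}}(\P^n/\F_p)$, Poincar\'e duality for the smooth proper $\F_p$-scheme $\P^n$ makes the cup-product pairing $H^{2i}_{\on{dR}}\times H^{2(n-i)}_{\on{dR}}\to H^{2n}_{\on{dR}}\cong\F_p$ perfect, so $x^i\neq 0$ for $0\leq i\leq n$; therefore $H^*_{\on{dR}}(\P^n/\F_p)=\F_p[x]/(x^{n+1})$ with $|x|=2$, as asserted.

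Now for the operations. Since $\P^n=[(\A^{n+1}\setminus\{0\})/\G_{\on{m}}]$ is a quotient stack of the kind treated in \Cref{approximate}, it satisfies \Cref{property}; and being a smooth projective $\Z$-scheme, it lifts to $W_2(\F_p)=\Z/p^2\Z$ with affine diagonal. We may therefore invoke \Cref{cris-compatible} (negative operations vanish), \Cref{steenrod-0}(a) ($\on{Sq}^0$, and $\on{P}^0$, factor through $H^2(\P^n,\mc{O}_{\P^n})$), \Cref{sheaf-properties}(i) (the unstable axiom), and, when $p>2$, \Cref{bockstein} (existence of $\beta$). Then: $\on{Sq}^i(x)=\on{P}^i(x)=0$ for $i<0$; $\on{Sq}^i(x)=0$ for $i>2$ and $\on{P}^i(x)=0$ for $i>1$ by the unstable axiom, while the same axiom gives $\on{Sq}^2(x)=x^2$ and $\on{P}^1(x)=x^p$; $\on{Sq}^1(x)\in H^3_{\on{dR}}(\P^n/\F_p)=0$, and for $p>2$ also $\beta(x)\in H^3_{\on{dR}}(\P^n/\F_p)=0$; finally $\on{Sq}^0(x)$ and $\on{P}^0(x)$ factor through $H^2(\P^n,\mc{O}_{\P^n})=0$ by Serre vanishing. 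Summing over $i$ gives $\on{Sq}(x)=x^2$ when $p=2$, and $\on{P}(x)=x^p$, $\beta(x)=0$ when $p>2$.

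There is really no obstacle here: apart from the classical computation of $H^*_{\on{dR}}(\P^n/\F_p)$ and the vanishing $H^{>0}(\P^n,\mc{O}_{\P^n})=0$ that kills the degree-zero operation, the argument is pure bookkeeping. The only point worth emphasizing is that a generator in cohomological degree $2$ has no room to move: every operation on $x$ is either forced by the unstable axiom, killed by a parity argument, killed by \Cref{cris-compatible}, or --- in the single remaining case of $\on{Sq}^0$, $\on{P}^0$ --- controlled by coherent cohomology through \Cref{steenrod-0}(a).
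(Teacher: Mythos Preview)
Your proof is correct and follows essentially the same approach as the paper: vanishing of negative operations via \Cref{cris-compatible}, vanishing of $\on{Sq}^0$ and $\on{P}^0$ via the factorization through $H^{>0}(\P^n,\mc{O}_{\P^n})=0$ in \Cref{steenrod-0}(a), the unstable axiom for the top operation, and the fact that degree~$3$ is zero to kill $\on{Sq}^1(x)$ and $\beta(x)$. The paper's own argument is simply a terser version of yours that takes the ring structure of $H^*_{\on{dR}}(\P^n/\F_p)$ as given.
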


	\begin{proof}
	By \Cref{main-steenrod}(iv), the negative Steenrod operations on $\P^n$ are trivial. Recall that  $H^i(\P^n,\mc{O}_{\P^n})$ is zero for $i>0$. By \Cref{main-steenrod}(iv), we deduce that $\on{P}^0$ and $\on{Sq}^0$ are zero on $H^*_{\on{dR}}(\P^n/\F_p)$. Since $\beta(x)$ has degree $3$, necessarily $\beta(x)=0$.
	\end{proof}
	
		\begin{lemma}\label{gm}
		Let $H^*_{\on{dR}}(B\G_{\on{m}}/\F_p)=\F_p[x]$, where $x$ has degree $2$.
		\begin{enumerate}[label=(\alph*)]
			\item If $p=2$, then $\on{Sq}(x)=x^2$.
			\item If $p>2$, then $\on{P}(x)=x^p$ and $\beta(x)=0$.
		\end{enumerate}
	\end{lemma}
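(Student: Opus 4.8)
The plan is to deduce the lemma from the corresponding statement for $\P^n$ (the previous lemma) together with the fact that $B\G_{\on{m}}$ is approximated in low degrees by projective spaces. Since $B\G_{\on{m}} = [\Spec\F_p/\G_{\on{m}}]$, \Cref{approximate} applies; running the construction in its proof with the scaling action of $\G_{\on{m}}$ on $V = \A^{n+1}$, taking $Z = \{0\}$ and $U = \A^{n+1}\setminus\{0\}$, produces for every $d\geq 0$ and all $n$ large enough a morphism $f_n\colon \P^n \to B\G_{\on{m}}$, classifying $\mc{O}_{\P^n}(-1)$, such that $f_n^*\colon H^i_{\on{dR}}(B\G_{\on{m}}/\F_p)\to H^i_{\on{dR}}(\P^n/\F_p)$ is injective for all $i\leq d$. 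First I would fix $d = 2$ and $n$ correspondingly, so that $f_n^*$ is an isomorphism in degree $2$ and sends the generator $x$ to a generator of $H^2_{\on{dR}}(\P^n/\F_p)$.

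Next I would isolate the one nontrivial point. By \Cref{main-steenrod}(i), $\on{Sq}^2(x) = x^2$ (resp.\ $\on{P}^1(x) = x^p$) and $\on{Sq}^i(x) = 0$ for $i > 2$ (resp.\ $\on{P}^i(x) = 0$ for $i > 1$); by \Cref{main-steenrod}(iv), which applies since $B\G_{\on{m}}$ is a quotient stack of the required type, the negative operations vanish; and $\on{Sq}^1(x)$ lies in $H^3_{\on{dR}}(B\G_{\on{m}}/\F_p) = 0$, so it too vanishes. Hence $\on{Sq}(x) = \on{Sq}^0(x) + x^2$ and $\on{P}(x) = \on{P}^0(x) + x^p$, and the claim reduces to showing that the degree-$2$ operation $\on{Sq}^0$ (resp.\ $\on{P}^0$) annihilates $x$. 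For this I apply the naturality of Steenrod operations (\Cref{functorial}) to $f_n$, combined with the previous lemma: $f_n^*(\on{Sq}^0(x)) = \on{Sq}^0(f_n^* x) = 0$, because $\on{Sq}$ of a generator of $H^2_{\on{dR}}(\P^n/\F_p)$ is just its square (and similarly $f_n^*(\on{P}^0(x)) = 0$); injectivity of $f_n^*$ in degree $2$ then forces $\on{Sq}^0(x) = 0$ (resp.\ $\on{P}^0(x) = 0$).

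It remains to handle $\beta$ when $p > 2$: it is defined on $H^*_{\on{dR}}(B\G_{\on{m}}/\F_p)$ by \Cref{main-steenrod}(v), since $B\G_{\on{m}}$ is the base change to $\F_p$ of the smooth finite-type $W_2(\F_p)$-stack $B\G_{\on{m},\Z/p^2\Z}$, which has affine diagonal; and $\beta(x)\in H^3_{\on{dR}}(B\G_{\on{m}}/\F_p) = 0$, so $\beta(x) = 0$. I do not expect a genuine obstacle here: the argument is just naturality plus the already-established $\P^n$ case, the only real input being the approximation of $B\G_{\on{m}}$ by projective spaces. As an alternative to the second paragraph, one can get $\on{Sq}^0(x) = 0$ directly from \Cref{main-steenrod}(iv), since that operation factors through $H^2(B\G_{\on{m}},\mc{O})$, which vanishes because $\G_{\on{m}}$ is linearly reductive.
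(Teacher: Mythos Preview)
Your proof is correct and follows essentially the same approach as the paper's: reduce to showing $\on{Sq}^0(x)=0$ (resp.\ $\on{P}^0(x)=0$), then use naturality along a map to projective space together with the previous lemma. The only cosmetic difference is that the paper uses $f\colon \P^1\to B\G_{\on{m}}$ directly (noting that $f^*$ is reduction modulo $x^2$, hence injective in degree $2$) rather than invoking \Cref{approximate} to produce a $\P^n$ with the required injectivity; your alternative via $H^2(B\G_{\on{m}},\mc{O})=0$ is also valid but not the route the paper takes.
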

	
	\begin{proof}
		By \Cref{main-steenrod}(iv), all negative Steenrod operations on $H^*_{\on{dR}}(B\G_{\on{m}}/\F_p)$ are trivial. Let $f:\P^1\to B\G_{\on{m}}$ be the morphism corresponding to the $\G_{\on{m}}$-torsor $\A^{2}\setminus\set{0}\to \P^1$. We may fix an isomorphism $H^*_{\on{dR}}(\P^1/\F_p)\cong \F_p[x]/(x^2)$ so that the induced ring homomorphism $f^*:H^*_{\on{dR}}(B\G_{\on{m}}/\F_p)\to H^*_{\on{dR}}(\P^1/\F_p)$ is given by reduction modulo $x^2$. By \Cref{functorial}, $f^*$ is compatible with Steenrod operations. 
		
		(a) Write $\on{Sq}(x)=ax+x^2$ for some $a\in \F_2$. We have	\[0=\on{Sq}^0(f^*(x))=f^*(\on{Sq}^0(x))=f^*(ax)=af^*(x),\] which implies that $a=0$, hence $\on{Sq}(x)=x^2$.
		
		(b) Since $\beta(x)$ has degree $3$ and $H^*_{\on{dR}}(B\G_{\on{m}}/\F_p)$ is concentrated in even degrees, we have $\beta(x)=0$. Write $\on{P}(x)=ax+x^p$. Then
		\[0=\on{P}^0(f^*x)=f^*(\on{P}^0(x))=f^*(ax)=af^*(x),\] hence $a=0$ and $\on{P}(x)=x^p$.
	\end{proof}

	Let $k$ be a field, let $G$ be a connected reductive $k$-group, and let $p$ be a prime number. Assume first that $k$ is algebraically closed. Let $T$ be a maximal torus of $G$, let $\hat{T}$ be the character group of $T$, let $B$ be a Borel subgroup of $G$ containing $T$, and let $n:=\dim G/B$. We have a natural group homomorphism $\hat{T}\to CH^1(G/B)$. We obtain a homomorphism \begin{equation}\label{torsion-def}\on{Sym}^n(\hat{T})\to CH^n(G/B)\xrightarrow{\deg}\Z\xrightarrow{\pi} \Z/p\Z,\end{equation} where $\on{deg}$ is the degree map and $\pi$ is the projection modulo $p$. We say that $p$ is a torsion prime for $G$ if the composition (\ref{torsion-def}) is zero. If $k$ is an arbitrary field, we say that $p$ is a torsion prime for $G$ if $p$ is a torsion prime for $G_{\cl{k}}$. This is the definition given in \cite[p. 1592]{totaro2018hodge}, to which we refer for other equivalent formulations.

	\begin{proof}[Proof of \Cref{mainthm}(b)]
		Let $T$ be a maximal torus of $G$, let $\mathfrak{g}$ and $\mathfrak{t}$ be the Lie algebras of $G$ and $T$, $\mathfrak{g}^*$ and $\mathfrak{t}^*$ be their duals, and $\on{Sym}(\mathfrak{g}^*)$ and $\on{Sym}(\mathfrak{t}^*)$ be the symmetric $\F_p$-algebras on $\mathfrak{g}$ and $\mathfrak{t}$. Fix an integer $i\geq 0$. We have a commutative diagram
		\begin{equation}\label{torus-pullback}
		\begin{tikzcd}
		H^0(G,\on{Sym}^i(\mathfrak{g}^*))\arrow[d]  & H^i(BG,\Omega_{BG/\F_p}^i)\arrow[r] \arrow[l,swap,"\sim"] \arrow[d] & H^{2i}_{\on{dR}}(BG/\F_p) \arrow[d]  \\
		H^0(T,\on{Sym}^i(\mathfrak{t}^*)) & H^i(BT,\Omega_{BT/\F_p}^i) \arrow[l,swap,"\sim"] \arrow[r] &  H^{2i}_{\on{dR}}(BT/\F_p),
		\end{tikzcd}
		\end{equation}
		where the vertical maps are pullbacks, the horizontal maps on the left are the isomorphisms of \cite[Corollary 2.2]{totaro2018hodge}, and the horizontal maps on the right arise in the Hodge-de Rham spectral sequence for $BG$ and $BT$; see \cite[Lemma 8.2]{totaro2018hodge}.	
		
		Let $T$ be a split maximal torus of $G$, and let $B$ be a Borel subgroup of $G$ containing $T$. We have a spectral sequence of $k$-algebras
		\[E_2^{ij}:=H^i_{\on{H}}(BG/\F_p)\otimes H^j_{\on{H}}((G/B)/\F_p)\Rightarrow H^{i+j}_{\on{H}}(BT/\F_p),\]
		where $H^i_{\on{H}}(-/\F_p):= \oplus_l H^l(-,\Omega^{i-l})$; see \cite[Proposition 9.3]{totaro2018hodge}. Since $p$ is not a torsion prime for $G$, the spectral sequence degenerates; see the last paragraph of the proof of \cite[Theorem 9.1]{totaro2018hodge}. In particular, the pullback map $H^i(BG,\Omega_{BG/\F_p}^i)\to H^i(BT,\Omega_{BT/\F_p}^i)$ appearing in (\ref{torus-pullback}) is injective.
		
		It follows from \cite[Corollary 2.2, Theorem 4.1]{totaro2018hodge} that $H^i(BT,\Omega_{BT/\F_p}^j)=0$ when $i\neq j$. Thus $H^i(BG,\Omega_{BG/\F_p}^j)=0$ when $i\neq j$. Thus in the Hodge-de Rham spectral sequences
		\[E_1^{ij}:=H^j(X,\Omega_{X/\F_p}^i)\Rightarrow H^{i+j}_{\on{dR}}(X/\F_p)\] 
		 for $X=BG,BT$, the $E_1$-pages are concentrated in the diagonal $i=j$. This implies that the spectral sequences degenerate. From this we obtain that $H^j_{\on{dR}}(BG/\F_p)=0$ for odd $j$, and that the right horizontal maps in (\ref{torus-pullback}) are isomorphisms. 
		 
		Summarizing, we have showed that all horizontal arrows in (\ref{torus-pullback}) are isomorphisms, and all vertical arrows are injective. Since $H^j_{\on{dR}}(BG/\F_p)=0$ for odd $j$, this shows that the pullback $H^*_{\on{dR}}(BG/\F_p)\to H^*_{\on{dR}}(BT/\F_p)$ is injective.
	\end{proof}

\begin{rmk}
	One could also prove \Cref{mainthm}(b) by establishing the injectivity of $H^0(G,\on{Sym}^i(\mathfrak{g}^*))\to H^0(T,\on{Sym}^i(\mathfrak{t}^*))$, as follows. It is enough to treat the case when $G$ is semisimple. Then the injectivity follows from \cite[Theorem 8.1]{totaro2018hodge} (due to T. A. Springer and R. Steinberg \cite[Section II.3.17']{springer1970conjugacy}, and P.-E. Chaput and M. Romagny \cite[Theorem 1.1]{chaput2010adjoint}) when $p>2$, or when $p=2$ and $\mathfrak{g}$ does not have factors of the form $\mathfrak{Sp}_{2n}$. In the remaining cases, it is a consequence of results of Chaput-Romagny; see the last paragraph of the proof of \cite[Theorem 9.2]{totaro2018hodge}.
\end{rmk}

\begin{lemma}\label{mup}
	Let $H^*_{\on{dR}}(B\mu_p/\F_p)=\F_p[t,v]/(v^2)$, where $|t|=2$ and $|v|=1$.
	\begin{enumerate}[label=(\alph*)]
		\item If $p=2$, then $\on{Sq}(t)=t^2$ and $\on{Sq}(v)=0$.
		\item If $p>2$, then $\on{P}(t)=t^p$, $\on{P}(v)=0$, $\beta(t)=0$, $\beta(v)=0$.
	\end{enumerate}
\end{lemma}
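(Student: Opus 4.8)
The plan is to treat the two algebra generators $t$ and $v$ separately, using naturality (\Cref{functorial}) together with the computation for $B\G_{\on{m}}$ in \Cref{gm}. For $t$: the inclusion $\mu_p\hookrightarrow\G_{\on{m}}$ induces $g\colon B\mu_p\to B\G_{\on{m}}$, and $t$ is the first Chern class of the tautological character of $\mu_p$, so $t=g^*(x)$ with $x$ the generator of \Cref{gm}. Since $g^*$ respects Steenrod operations, \Cref{gm} gives $\on{Sq}(t)=g^*(x^2)=t^2$ when $p=2$ and $\on{P}(t)=g^*(x^p)=t^p$ when $p>2$. For the Bockstein (when $p>2$), take $\tilde X=B\mu_{p,W_2(\F_p)}$, the classifying stack of $\mu_p$ over $W_2(\F_p)$; it is smooth, of finite type, with affine diagonal, and $\tilde X\times_{W_2(\F_p)}\F_p=B\mu_p$, so \Cref{bockstein} applies. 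By construction, $\beta$ is the connecting homomorphism of an exact sequence $0\to K\to\tilde K\to K\to 0$ with $H^*(K)=H^*_{\on{dR}}(B\mu_p/\F_p)$, $H^*(\tilde K)=H^*_{\on{dR}}(\tilde X/W_2(\F_p))$ and second map the reduction modulo $p$; in particular $\beta$ annihilates the image of the reduction map $H^*_{\on{dR}}(\tilde X/W_2(\F_p))\to H^*_{\on{dR}}(B\mu_p/\F_p)$. As $t$ is $g^*$ of the first Chern class of the tautological line bundle on $B\G_{\on{m}}$, which lifts to $B\G_{\on{m},W_2(\F_p)}$, the class $t$ lifts, so $\beta(t)=0$.

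For $v$: since $\mu_p$ is non-reduced, its Lie algebra is one-dimensional, so $\Omega^1_{B\mu_p/\F_p}$ is invertible, generated by the logarithmic differential $d\log x$ of the tautological character, which is closed. Since $\mu_p$ is diagonalizable it is linearly reductive, so $H^i(B\mu_p,\mc{O}_{B\mu_p})=H^i(\mu_p,\F_p)=0$ for $i>0$; in particular $H^1(B\mu_p,\mc{O}_{B\mu_p})=0$, and the exact sequence of low-degree terms in the Hodge spectral sequence then gives $H^1_{\on{dR}}(B\mu_p/\F_p)=H^0(B\mu_p,\Omega^1_{B\mu_p/\F_p})=\F_p\cdot[d\log x]$, so that $v=[d\log x]$. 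Now $B\mu_p=[\Spec\F_p/\mu_p]$ with $\mu_p$ a linear algebraic group, so \Cref{main-steenrod}(iv) applies: negative operations vanish; by \Cref{main-steenrod}(i), $\on{Sq}^i(v)=0$ for $i>1$ and $\on{Sq}^1(v)=v^2=0$, while for $p>2$ one has $\on{P}^i(v)=0$ for $i\ge 1$ and no $i$ satisfies $2i=1$. Hence $\on{Sq}(v)=\on{Sq}^0(v)$ and $\on{P}(v)=\on{P}^0(v)$, and since $v\in H^0(B\mu_p,\Omega^1_{B\mu_p/\F_p})$, \Cref{steenrod-0}(b) (and the analogous statement for $\on{Sq}^0$ when $p=2$) forces $\on{Sq}^0(v)=\on{P}^0(v)=0$. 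Thus $\on{Sq}(v)=0$ and $\on{P}(v)=0$.

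It remains to see $\beta(v)=0$ when $p>2$ (the case $p=2$ is already covered, since there $\beta=\on{Sq}^1$ and $\on{Sq}^1(v)=v^2=0$). The point is that $d\log x$ is defined verbatim over $W_2(\F_p)$: the group scheme $\mu_{p,W_2(\F_p)}$ is again non-reduced, $\Omega^1_{B\mu_{p,W_2(\F_p)}/W_2(\F_p)}$ is generated by $d\log x$, and this is a closed $1$-form whose reduction modulo $p$ is the one on $B\mu_p$. Hence $v$ is the reduction of the de Rham class $[d\log x]\in H^1_{\on{dR}}(\tilde X/W_2(\F_p))$, and therefore $\beta(v)=0$ by the first paragraph. (As $\beta$ is a graded derivation and $H^*_{\on{dR}}(B\mu_p/\F_p)$ is generated by $t$ and $v$, this shows $\beta$ vanishes identically.) The main point to pin down carefully is the identification $v=[d\log x]$ — equivalently, the description of $\Omega^1_{B\mu_p/\F_p}$ as the invertible sheaf generated by $d\log x$ — since this is what makes both the Hodge-theoretic vanishing and the lifting to $W_2(\F_p)$ go through; once this (and the compatibility of Totaro's de Rham cohomology ring with this description) is in hand, the rest is formal.
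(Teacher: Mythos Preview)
Your argument for the Steenrod operations on $t$ and $v$ is exactly the paper's: pull $t$ back from $B\G_{\on{m}}$ via \Cref{gm} and \Cref{functorial}, and kill $\on{Sq}^0(v)$, $\on{P}^0(v)$ via \Cref{steenrod-0}(b) once you know $v$ lies in the image of $H^0(B\mu_p,\Omega^1_{B\mu_p/\F_p})$. The paper cites Totaro's proof of his Proposition~10.1 for this last fact rather than rederiving the description $v=[d\log x]$, but your more explicit identification is fine.

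The genuine difference is in the Bockstein. For $\beta(t)=0$ the paper simply invokes \Cref{gm} (implicitly using naturality of $\beta$ along the map $B\mu_p\to B\G_{\on{m}}$, which lifts to $W_2$), while for $\beta(v)=0$ the paper only sketches: it points to the \v{C}ech presentation $B\mu_p=[\G_{\on{m}}/\G_{\on{m}}]$ and leaves the computation to the reader, remarking that $\beta(v)=0$ is not used elsewhere. Your approach---show that both $t$ and $v$ lift to $H^*_{\on{dR}}(B\mu_{p,W_2(\F_p)}/W_2(\F_p))$ and hence lie in the kernel of the connecting map---is cleaner and more uniform. To make it airtight you should note that the identification $H^*(\tilde K)\cong H^*_{\on{dR}}(\tilde X/W_2(\F_p))$ (needed so that ``image of reduction'' means what you want) follows from the same \v{C}ech-to-derived argument as \Cref{cech-to-derived}, using that the $\tilde U_n$ in \Cref{bockstein} are smooth affine over $W_2(\F_p)$; and that the surjectivity of $H^0(B\mu_{p,W_2},\Omega^1)\to H^0(B\mu_p,\Omega^1)$ (which gives the lift of $v$) uses that $\mu_{p,W_2}$ is also linearly reductive, so $H^1(B\mu_{p,W_2},\mc{O})=0$ and the low-degree exact sequence over $W_2$ has the same shape. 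With those two remarks in place your lifting argument is complete and arguably preferable to an explicit \v{C}ech calculation.
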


\begin{proof}
	From the proof of \cite[Proposition 10.1]{totaro2018hodge}, $t$ is defined as the pullback of $c_1\in H^2_{\on{dR}}(B\G_{\on{m}}/\F_p)$ induced by the inclusion $\mu_p\hookrightarrow\G_{\on{m}}$. By \Cref{gm}, we deduce that $\on{Sq}(t)=t^2$ if $p=2$, and that $\on{P}(t)=t^p$ and $\beta(t)=0$ if $p>2$. 
	
	Since $|v|=1$ and $v^2=0$, we have $\on{Sq}(v)=\on{Sq}^0(v)$ and $\on{P}(v)=\on{P}^0(v)$. From the proof of \cite[Proposition 10.1]{totaro2018hodge}, we see that $v$ belongs to the image of the differential $H^0(B\mu_p,\Omega^1_{B\mu_p/\F_p})\to H^1_{\on{dR}}(B\mu_p/\F_p)$ arising from the Hodge-de Rham spectral sequence for $B\mu_p$. By \Cref{steenrod-0}(b), we deduce that $\on{Sq}^0(v)=0$ and $\on{P}^0(v)=0$.
	
	To prove that $\beta(v)=0$, one may apply the \v{C}ech-theoretic interpretation given in of \Cref{bock-cech} to the presentation of $B\mu_p$ as $[\G_{\on{m}}/\G_{\on{m}}]$, where $\G_{\on{m}}$ acts on itself via the $p$-th power map. The \v{C}ech cover associated to the $\G_{\on{m}}$-torsor $\G_{\on{m}}\to [\G_{\on{m}}/\G_{\on{m}}]$ consists of $\G_{\on{m}}^i$ in degree $i$, for all $i\geq 0$. As we have no use for $\beta(v)=0$ in the paper, we leave this computation to the reader.
\end{proof}

	The last ingredient is the following special case of the K\"unneth formula in de Rham cohomology.

	\begin{lemma}\label{kunneth}
		Let $k$ be a field, let $G$ and $H$ be linear algebraic $k$-groups. Assume that the Hodge-de Rham spectral sequences for $BG$ and $BH$ degenerate.
		
		(a) The Hodge-de Rham spectral sequence for $B(G\times_k H)$ degenerates.
		 
		(b) Assume further that $H^i(BG,\Omega^j_{BG/k})=H^i(BH,\Omega^j_{BH/k})=0$ for all $i\neq j,j+1$. Then the projections $G\times_k H\to G$ and $G\times_k H\to H$ induce an isomorphism of $k$-algebras
		\[H^*_{\on{dR}}(BG/k)\otimes H^*_{\on{dR}}(BH/k)\xrightarrow{\sim} H^*_{\on{dR}}(B(G\times_kH)/k).\]
	\end{lemma}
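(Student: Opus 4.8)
The plan is to deduce both statements from the K\"unneth formula for quasi-coherent cohomology over a field, applied to the identification $B(G\times_k H)\cong BG\times_k BH$.

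First I would record the Hodge-theoretic K\"unneth decomposition. Since $G\times_k H$ is smooth over $k$, so are $BG$, $BH$ and $B(G\times_k H)$, and, writing $\on{pr}_1,\on{pr}_2$ for the two projections, one has $\Omega^1_{B(G\times_k H)/k}\cong \on{pr}_1^*\Omega^1_{BG/k}\oplus \on{pr}_2^*\Omega^1_{BH/k}$; taking exterior powers and keeping track of the de Rham differential yields an isomorphism of complexes of big \'etale sheaves \[\Omega_{B(G\times_k H)/k}\cong \on{Tot}\bigl(\on{pr}_1^*\Omega_{BG/k}\otimes_{\mc{O}}\on{pr}_2^*\Omega_{BH/k}\bigr).\] Next, for quasi-coherent sheaves $\mc{F}$ on $BG$ and $\mc{G}$ on $BH$, I would establish the K\"unneth isomorphism \[H^m\bigl(B(G\times_k H),\on{pr}_1^*\mc{F}\otimes_{\mc{O}}\on{pr}_2^*\mc{G}\bigr)\cong \bigoplus_{a+b=m}H^a(BG,\mc{F})\otimes_k H^b(BH,\mc{G}).\] This follows by computing all cohomologies via the cosimplicial $k$-modules $n\mapsto\mc{F}(G^n)$ and $n\mapsto\mc{G}(H^n)$ used by Totaro \cite{totaro2018hodge} (here $G^n$, $H^n$ are affine, so higher quasi-coherent cohomology vanishes on them): the cosimplicial module computing the left-hand side, evaluated on $(G\times_k H)^n=G^n\times_k H^n$, is the diagonal of the bicosimplicial $k$-module $(\mc{F}(G^p)\otimes_k\mc{G}(H^q))_{p,q}$, and the Eilenberg--Zilber theorem together with the K\"unneth theorem for complexes of $k$-vector spaces gives the displayed formula. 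Combined with the decomposition of $\Omega^n$, this identifies the $E_1$-page of the Hodge--de Rham spectral sequence of $B(G\times_k H)$ with the tensor product of the $E_1$-pages of those of $BG$ and $BH$, the comparison map being the one induced by the external cup product.

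For (a), I would invoke multiplicativity. The external product makes the Hodge--de Rham spectral sequence of $B(G\times_k H)$ into the target of a map of spectral sequences out of the tensor-product spectral sequence of $BG$ and $BH$, and by the previous step this map is an isomorphism on $E_1$, hence on every later page. If $d_1$ vanishes for $BG$ and for $BH$, the differential on the tensor-product spectral sequence is $d_1\otimes\on{id}\pm\on{id}\otimes d_1=0$, so $E_2=E_1$; taking cohomology of a tensor product of complexes with zero differential again returns the tensor product, so inductively $E_r=E_1$ for all $r$, and transporting along the isomorphism shows the spectral sequence of $B(G\times_k H)$ degenerates at $E_1$. For (b), the extra vanishing hypothesis forces, for each $n$, a unique $i$ with $H^{n-i}(BG,\Omega^i_{BG/k})\neq 0$ (namely $2i\in\{n,n-1\}$), and likewise for $BH$; so, using the degenerations now available, each of $H^n_{\on{dR}}(BG)$ and $H^n_{\on{dR}}(BH)$ is canonically a single Hodge group. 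The $k$-algebra homomorphism $H^*_{\on{dR}}(BG)\otimes H^*_{\on{dR}}(BH)\to H^*_{\on{dR}}(B(G\times_k H))$ induced by $\on{pr}_1,\on{pr}_2$ is the map on abutments of the above map of (now $E_1$-degenerate) spectral sequences, which is an isomorphism on $E_1$; since both spectral sequences lie in the first quadrant they converge with finite filtrations in each total degree, so the map on abutments is an isomorphism, and it is a ring map by construction of the external product. The hypothesis of (b) makes the abutment filtrations trivial and hence removes any extension ambiguity; without it one would instead pass to associated gradeds for the Hodge filtration.

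The step I expect to be the main obstacle is the K\"unneth isomorphism for quasi-coherent cohomology on $BG\times_k BH$, and more precisely the bookkeeping showing that the external cup product genuinely induces a map of multiplicative spectral sequences which on $E_1$ is this K\"unneth map; once that is in place, the passage from degeneration to the isomorphism in (b) is a formal consequence of the first-quadrant shape of the spectral sequences.
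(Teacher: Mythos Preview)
Your proposal is correct and follows essentially the same route as the paper: both arguments rest on the K\"unneth isomorphism for Hodge cohomology of $BG\times_k BH$ (the paper cites \cite[Proposition~5.1]{totaro2018hodge} rather than deriving it via Eilenberg--Zilber as you do), then use multiplicativity of the Hodge--de Rham spectral sequence for (a), and for (b) exploit the concentration of $E_1$ on the two diagonals $i=j$ and $i=j+1$ to pass from $E_\infty$ to the abutment without extension problems. One small correction: you write ``since $G\times_k H$ is smooth over $k$'', but linear algebraic groups need not be smooth (e.g.\ $\mu_p$ in characteristic $p$); this is harmless since, as the paper notes, $BG$ is smooth over $k$ regardless of whether $G$ is.
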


	\begin{proof}
		By the K\"unneth formula in Hodge cohomology \cite[Proposition 5.1]{totaro2018hodge}, the projections of $G\times_kH$ onto $G$ and $H$ induce an isomorphism
		\begin{equation}\label{kunneth-hodge}H^*_{\on{H}}(BG/k)\otimes H^*_{\on{H}}(BH/k)\xrightarrow{\sim} H^*_{\on{H}}(B(G\times_kH)/k),\end{equation} where $H^i_{\on{H}}(-/\F_p):= \oplus_l H^l(-,\Omega^{i-l})$ is Hodge cohomology. This isomorphism is multiplicative and respects the bigrading in Hodge cohomology.
		
		(a) The Hodge-de Rham spectral sequence is a spectral sequence of $k$-algebras, and its $E_1$-page is Hodge cohomology. By assumption, all differentials are trivial on elements coming from $H^*_{\on{H}}(BG/k)$ and $H^*_{\on{H}}(BH/k)$, hence on all elements of $H^*_{\on{H}}(B(G\times_kH)/k)$.
		
		(b) It follows that $H^i(B(G\times_kH),\Omega^j_{B(G\times_kH)/k})=0$ for $i\neq j,j+1$. Thus, the $E_1$-page of the Hodge-de Rham spectral sequences for $G,H,G\times_kH$ are concentrated in bidegrees $(i,i),(i+1,i)$. By the assumptions and (a), the Hodge-de Rham spectral sequences for $G,H,G\times_k$ are degenerate. This yields a commutative square
		\[
		\begin{tikzcd}
		H^*_{\on{H}}(BG/k)\otimes H^*_{\on{H}}(BH/k) \arrow[r,"\sim"]\arrow[d,"\wr"] & H^*_{\on{H}}(B(G\times_kH)/k) \arrow[d,"\wr"]  \\
		H^*_{\on{dR}}(BG/k)\otimes H^*_{\on{dR}}(BH/k) \arrow[r] &  H^*_{\on{dR}}(B(G\times_kH)/k),
		\end{tikzcd}
		\]
		from which we obtain the desired isomorphism.		 
	\end{proof}

	\begin{proof}[Proof of \Cref{mainthm}(c)]
	We start with $B\on{O}_2$. We have
	\[H^*_{\on{dR}}(B\on{O}_2/\F_2)=\F_2[u_1,u_2],\qquad |u_1|=1, |u_2|=2.\] We think of $\on{O}_2$ as the isometry group of the quadratic form $q$ on $\A^2_{\F_2}$ given by $q(x,y)=xy$. There is a subgroup $H$ of $\on{O}_2$, isomorphic to $\Z/2\Z\times \mu_2$, where $\Z/2\Z$ acts by switching $x$ and $y$, and $\mu_2$ acts by scaling on $\A^2_{\F_2}$. By \Cref{mainthm}(a) we have \[H^*_{\on{dR}}(B(\Z/2\Z)/\F_2)=\F_2[s],\qquad |s|=1,\qquad \on{Sq}(s)=s+s^2,\] and by \Cref{mup}(a) we have \[H^*(B\mu_2/\F_2)=\F_2[t,v]/(v^2),\qquad |t|=2, |v|=1,\qquad \on{Sq}(t)=t^2,\quad \on{Sq}(v)=0.\] By \Cref{kunneth},  we have \[H^*_{\on{dR}}(BH/\F_2)=\F_2[s,t,v]/(v^2),\] compatibly with the projection maps $BH\to B(\Z/2\Z)$ and $BH\to B\mu_2$. Applying \Cref{functorial} to the projections $BH\to B\mu_2$ and $BH\to B(\Z/2\Z)$, we obtain
	\[\on{Sq}(s)=s+s^2,\quad \on{Sq}(t)=t^2,\quad \on{Sq}(v)=0.\] The induced homomorphism $H^*(B\on{O}_2/\F_2)\to H^*(BH/\F_2)$ sends $u_1\mapsto s$ and $u_2\mapsto t+sv$, and in particular it is injective; see \cite[p. 1602]{totaro2018hodge}. Note that $\on{Sq}(t+sv)=t^2$ and that $u_2^2\mapsto t^2$. It follows that 
	\[\on{Sq}(u_1)=u_1+u_1^2, \quad \on{Sq}(u_2)=u_2^2.\]
	For every $r\geq 1$, by \Cref{kunneth} we may write 
	\[H^*_{\on{dR}}(B\on{O}_2^r/\F_2)=\F_2[s_1,\dots,s_r,t_1,\dots,t_r].\]	
	Here $s_i$ and $t_i$ are the pullback of the classes $u_1$ and $u_2$ in $H^*(B\on{O}_2/\F_2)$ along the $i$-th projection $B\on{O}_2^r\to B\on{O}_2$, respectively. Applying \Cref{functorial} to the projections, we see that	
	\[\on{Sq}(s_i)=s_i+s_i^2,\qquad \on{Sq}(t_i)=t_i^2.\]	
	By \cite[Theorem 11.1, Lemma 11.3]{totaro2018hodge}, we have	
	\[H^*_{\on{dR}}(B\on{O}_{2r}/\F_2)=\F_2[u_1,u_2,\dots,u_{2r}],\] where $|u_i|=i$ for every $i=1,\dots,2r$. Moreover, there exists a closed subgroup embedding $\iota: \on{O}_2^r\hookrightarrow \on{O}_{2r}$ such that the pullback map \[\iota^*: H^*_{\on{dR}}(B\on{O}_{2r}/\F_2)\to H^*_{\on{dR}}(B\on{O}_2^r/\F_2)\] is injective, and is given by
	\[\iota^*(u_{2a})=\sum_{1\leq i_1<\dots <i_a\leq r}t_{i_1}\cdots t_{i_a}, \quad \iota^*(u_{2a+1})=\sum_{m=1}^rs_m\sum_{1\leq i_1<\dots <i_a\leq r, i_h\neq m}t_{i_1}\cdots t_{i_a}.\]
	Since $\on{Sq}(t_i)=t_i^2$ and $\iota^*$ is injective, we have $\on{Sq}(u_{2a})=u_{2a}^2$. One proves by ascending induction on $0\leq d\leq a$ that
		\begin{equation}\label{orth3}
	\iota^*(u_{4a+1}+\sum_{t=2a-2d}^{2a-1}u_{2a-t}u_{2a+1+t})=\sum_{m=1}^r s_m\sum_{j_h\neq m} t_{j_1}\dots t_{j_d}\sum_{i_h\neq m} t_{i_1}\dots t_{i_{2a-d}}. 
	\end{equation}
	\begin{equation}\label{orth4}
	\iota^*(u_{4a+1}+\sum_{t=2a-2d-1}^{2a-1}u_{2a-t}u_{2a+1+t})=\sum_{m=1}^r s_m\sum_{\exists h: j_h=m} t_{j_1}\dots t_{j_d}\sum_{i_h\neq m} t_{i_1}\dots t_{i_{2a-d}},
	\end{equation}
	In the sums, $h$ is arbitrary, $1\leq j_1<\dots <j_d\leq r$ and $1\leq i_1<\dots < i_{2a-d}\leq r$. We define $t_{j_h},s_{i_h}:=0$ when $h>a$. When $d=0$, (\ref{orth3}) coincides with the previous expression of $\iota^*(u_{4a+1})$ in terms of the $s_i$ and $t_i$, and so is true. One then uses (\ref{orth3}) for $d$ to prove (\ref{orth4}) for $d$, and (\ref{orth4}) for $d$ to prove (\ref{orth3}) for $d+1$. By induction, this proves the formulas for all $0\leq d\leq a$. When $d=a$, (\ref{orth3}) becomes
	\[\iota^*(u_{4a+1}+\sum_{t=0}^{2a-1}u_{2a-t}u_{2a+1+t})=\sum_{m=1}^r s_m\sum_{j_h\neq m} t_{i_1}^2\dots t_{i_a}^2=\iota^*(\on{Sq}(u_{2a+1}))+\iota^*(u_{2a+1})^2.\] Since $\iota^*$ is injective, this implies \Cref{mainthm}(c) for $B\on{O}_{2r}$.
	
	The homomorphism \[H^*_{\on{dR}}(B\on{O}_{2r}/\F_2)\to H^*_{\on{dR}}(B\on{SO}_{2r}/\F_2)\] induced by the canonical inclusion $\on{SO}_{2r}\hookrightarrow \on{O}_{2r}$ sends $u_1\mapsto 0$ and $u_i\mapsto u_i$ for $i\geq 2$. This implies \Cref{mainthm}(c) for $\on{SO}_{2r}$. 
	
	Consider now the composition of closed embeddings \[\on{SO}_{2r+1}\hookrightarrow \on{O}_{2r+1}\hookrightarrow \on{O}_{2r+2},\] where $\on{O}_{2r+1}$ is embedded as the stabilizer subgroup of a non-zero vector of the standard $(2r+2)$-dimensional representation of $\on{O}_{2r+2}$. The induced homomorphism $H^*_{\on{dR}}(\on{O}_{2r+2}/\F_2)\to H^*_{\on{dR}}(\on{SO}_{2r+1}/\F_2)$ sends $u_1,u_{2r+2}\mapsto 0$ and $u_i\mapsto u_i$ for $2\leq i\leq 2r+1$, proving \Cref{mainthm}(c) for $\on{SO}_{2r+1}$. 
	
	Finally, recall that $\on{O}_{2r+1}\cong \on{SO}_{2r+1}\times \mu_2$. The K\"unneth isomorphism
	\[H^*_{\on{dR}}(B\on{SO}_{2r+1}/\F_2)\otimes H^*_{\on{dR}}(B\mu_2/\F_2)\xrightarrow{\sim} H^*_{\on{dR}}(B\on{O}_{2r+1}/\F_2)\] sends $u_i\mapsto u_i$ for all $i$, $v\mapsto v_1$ and $c\mapsto c_1$ (we are using the notation of \Cref{mup} for the cohomology of $B\mu_2$). Taking \Cref{mup} into account, this proves \Cref{mainthm}(c) for $B\on{O}_{2r+1}$.
	\end{proof}

	\section*{Acknowledgments}
	I would like to thank Ben Williams for a number of useful discussions on this topic, Benjamin Antieau and Samuel Bach for helpful conversations, and my advisor Zinovy Reichstein for giving me direction at various stages of this work.

\end{document}